\documentclass{siamart0216}

\newsiamremark{remark}{Remark}

\usepackage{amsfonts,amsmath,amssymb}
\usepackage{mathrsfs,mathtools,stmaryrd}
\usepackage{enumerate}
\usepackage{xspace} 
\usepackage{hyperref}

\newcommand{\bv}{\mathbf{v}}

\newcommand{\R}{\mathbb{R}}
\newcommand{\polT}{\mathbb{T}}
\newcommand{\frakt}{\mathfrak{t}}
\DeclareMathOperator{\suc}{succ}

\newcommand{\calS}{\mathcal{S}}

\newcommand{\bh}{\mathbf{h}}
\newcommand{\be}{\mathbf{e}}
\newcommand{\GRAD}{\nabla}
\DeclareMathOperator{\DIV}{div}
\newcommand{\diff}{\, \mbox{\rm d}}

\DeclareMathOperator{\diam}{diam}
\newcommand{\polN}{{\mathbb{N}}}
\newcommand{\bxi}{{\boldsymbol{\xi}}}
\newcommand{\Poinc}{{\mathtt{P}_\ell}}
\newcommand{\Bog}{{\mathtt{B}_\ell}}



%
\newcommand{\eps}{\varepsilon}

\DeclareMathOperator{\supp}{supp}

\DeclareMathOperator{\tr}{tr}

\newcommand{\ipd}{\lrcorner}
\renewcommand{\imath}{{\mathfrak{i}}}

\newcommand{\TheTitle}{Estimation of the continuity constants for Bogovski\u{\i} and regularized Poincar\'e integral operators}
\newcommand{\ShortTitle}{Constants for Bogovski{\u{\i}} and Poincar\'e operators}
\newcommand{\TheAuthors}{J.~Guzm\'an, A. J.~Salgado}

\headers{\ShortTitle}{\TheAuthors}

\title{{\TheTitle}\thanks{JG has been partially supported by NSF grant {DMS--1913083}. AJS has been partially supported by NSF grant DMS--1720213.}}

\author{
  Johnny Guzm\'an\thanks{Division of Applied Mathematics, Brown University, Providence, RI, 02912, USA.
    (\email{johnny\_guzman@brown.edu})}
  \and
  Abner J.~Salgado\thanks{Department of Mathematics, University of Tennessee, Knoxville, TN 37996, USA.
    (\email{asalgad1@utk.edu}, \url{http://www.math.utk.edu/\string~abnersg})}
}

\ifpdf
\hypersetup{
  pdftitle={\TheTitle},
  pdfauthor={\TheAuthors}
}
\fi

\begin{document}

\maketitle

\begin{center}
  \emph{We dedicate this work to the memory of our friend Francisco Javier Sayas}
  
  \emph{Rest in peace Pancho!}
\end{center}

\begin{abstract}
We study the dependence of the continuity constants for the regularized Poincar\'e and Bogovski\u{\i} integral operators acting on differential forms defined on a domain $\Omega$ of $\R^n$. We, in particular, study the dependence of such constants on certain geometric characteristics of the domain when these operators are considered as mappings from (a subset of) $L^2(\Omega,\Lambda^\ell)$ to $H^1(\Omega,\Lambda^{\ell-1})$, $\ell \in \{1, \ldots, n\}$. For domains $\Omega$ that are star shaped with respect to a ball $B$ we study the dependence of the constants on the ratio $\diam(\Omega)/\diam(B)$. A program on how to develop estimates for higher order Sobolev norms is presented. The results are extended to certain classes of unions of star shaped domains.
\end{abstract}

\begin{keywords}
Exterior derivative, Poincar\'e operator, Bogovski\u{\i} operator, differential forms, star shaped domains.
\end{keywords}

\begin{AMS}
42B20,    
42B37,    
35S05,    
35C15,    
53A45,    
58J10,    
65M60.    
\end{AMS}

\section{Introduction}
\label{sec:Intro}

A fundamental result in the analysis of models of incompressible fluids is the existence of a right inverse for the divergence operator. Let $\Omega \subset \R^n$, with $n \in \polN$, be a bounded domain with Lipschitz boundary, and $u \in L^2(\Omega)$ be such that $\int_\Omega u \diff x= 0$. Then there is a vector field $\bv \in H^1_0(\Omega,\R^n)$ such that
\[
  \DIV \bv = u, \qquad \| \GRAD \bv \|_{L^2(\Omega, \R^{n\times n})} \leq C \| u \|_{L^2(\Omega)},
\]
where the constant $C$ depends on $\Omega$, but not on $u$; see Section~\ref{sec:Notation} for notation. While this problem has been studied in several sources, and from different points of view; see \cite{MR0227584,MR0155092,MR1846644} for a very incomplete list of references and the introduction to \cite{MR3618122} for a nice historical account, we are interested here in \cite{MR553920,MR631691}, where the function $\bv$  is explicitly constructed. First, on domains that are star shaped with respect to a ball, the function is constructed by means of a regularized version of a path integral. This regularization is necessary, as integrals of $u$ along paths may not be well defined. In passing, the author mentions that the constant in the norm estimate depends on the ratio between the diameter of the domain and that of the ball. Then, for domains that can be represented as a finite union of star shaped domains, the construction is attained via a partition of unity argument. Further mapping properties of this operator have been discussed, for instance, in \cite{MR2731700,MR1880723,MR2643399,MR2548872}, and we refer the reader, again, to \cite{MR3618122} for a rather recent overview.

It is well known that vector fields and the operators of vector calculus, like the divergence, are nothing but particular cases of differential forms on the domain $\Omega$ and the exterior derivative \cite{MR3837152,MR0344035}. Thus, it is only natural to pose the question about the existence of a right inverse for the exterior derivative or, in the language of differential forms, to find conditions that guarantee that a closed form is exact. An explicit solution to this problem is presented in \cite{MR0344035} and it uses, once again, a path integral for its definition. As the classical theory is only concerned with differential forms with at least continuous coefficients, integrals over paths are meaningful. The situation is rather different if one wishes to deal with differential forms having coefficients that are not smooth, as this construction may not be valid anymore. To the best of our knowledge, a regularized version of the solution in \cite{MR0344035} was first presented in \cite{MR1241286}, see also \cite[Appendix]{MR2166752}. It is also shown \cite[Proposition 4.1]{MR1241286} that, if $\Omega$ is convex and $u \in L^2(\Omega, \Lambda^\ell)$, with $\ell \in \{1, \ldots,n\}$, is such that $\diff u = 0$, then there is $v \in H^1(\Omega, \Lambda^{\ell-1})$ such that
\[
  \diff v = u, \qquad \| v \|_{L^2(\Omega, \Lambda^{\ell-1})} + \diam(\Omega) \| \GRAD v \|_{L^2(\Omega,\Lambda^{\ell-1})^n} \leq C \diam(\Omega)^n \| u \|_{L^2(\Omega,\Lambda^\ell)},
\]
where the constant $C$ is only dimension dependent. From this a Poincar\'e--Sobolev inequality is obtained \cite[Corollary 4.2]{MR1241286}. We refer the reader to \cite{MR1857238,MR2834164,MR2271946,MR2552910} for further estimates for this operator.

A remarkable property of the operator constructed by Bogovski\u{\i} is the fact that the vector field $\bv$ has a vanishing trace. In fact, in the case of a domain that is star shaped with respect to a ball, the value of $\bv$ at a point depends only on the convex hull of the point and the ball. Therefore, $\bv$ is supported in $\bar\Omega$. This local property is rather unusual for integral operators, and it does not hold for the operators constructed in \cite{MR1241286}. In \cite{MR2425010} it was observed that by taking adjoints the locality is recovered. By conjugating with the Hodge star operator the authors were able to construct, for every $\ell \in \{1, \ldots, n\}$, two integral operators that preserve the locality properties and they proceeded to show several mapping properties for them. Further mapping properties of these operators were investigated  Costabel and McIntosh \cite{MR2609313}. In particular, they prove that these operators are bounded in various Sobolev norms when the domain is star shaped with respect to a ball. However, they do not track how the constants explicitly depend on the geometry of the domain. We call these operators the Bogovski\u{\i}--type and Poincar\'e--type integral operators, and it is our goal here to study the dependence of the continuity constants of these operators on some geometric characteristics of the domain. 

On the other hand Dur\'an, in \cite{MR3086804}, gives explicit bounds for the constant for the Bogovski\u{\i} operator in  the $H^1$--norm. These estimates improve on those given by Galdi \cite{galdi2011introduction}.  We will adopt the ideas of \cite{MR3086804}, where the operator from \cite{MR553920,MR631691} is considered, to both operators and the whole range of orders for differential forms. We must immediately mention that, since this technique heavily uses properties of the Fourier transform, all of the results that we obtain are for $L^2$--based spaces.  Bounds for the  Bogovski\u{\i}--type and Poincar\'e--type integral operators are needed in finite element methods (FEM); see for example \cite{morin2003local, farrell2020robust, chaumont2020stable, melenk2020commuting,boffi2011discrete, ern2015polynomial, braess2009equilibrated}. For simple geometries arising in FEM like a simplex one can prove the estimates by mapping to a reference simplex. However, for more complicated geometries arising in FEM (like curved elements or a patch of simplices) it might be useful to have results like the ones described in this paper.

Our presentation is organized as follows: In Section \ref{sec:Notation} we provide preliminaries. In Section \ref{sec:Poincare} we focus on the Poincar\'e--type operator. We mainly focus on $H^1$--estimates but we also show how to get a bound for the constant for $H^2$--norm. In the following section we obtain bounds for the Bogovski\u{\i}--type operator.  Finally, in Section~\ref{sec:LipschitzDomains} we use the results from the previous sections to give bounds for the constants if one has a chain of star shaped domains.

\section{Notation and preliminaries}
\label{sec:Notation}

Let us begin by presenting the notation that we will follow, together with some preliminary facts that shall be repeatedly used during the course of our presentation. During the course of our discussion $n \in \polN$ will indicate the spatial dimension. $\Omega \subset \R^n$ indicates a bounded domain with at least Lipschitz boundary. If we require additional conditions on $\Omega$ these will be indicated explicitly.
For any bounded, measurable, domain $E \subset \R^n$ we will indicate by $\diam(E)$ its diameter and by $|E|$ its Lebesgue measure. We will follow standard notation and definitions for real valued smoothness spaces over $\Omega$. We will make use of the Fourier transform and we refer the reader to \cite[Section 2.2.4]{MR3243734} for relevant results.

By $C$ we will indicate an nonessential constant whose value may change from line to line. If we wish to indicate explicitly that this constant depends on certain parameters, say $\alpha,\beta,\gamma$, we denote this by $C(\alpha,\beta,\gamma)$. By nonessential in this work we will mean that the constant does not depend on $\Omega$ or its geometric characteristics.

Let $D \subset \R^n$ be a bounded domain that is star shaped with respect to a ball $B \subset D$. By this we mean that every for every $y \in D$ the convex hull of $B \cup \{y\}$ is contained in $D$. It is known that \cite[Lemma 3.2.4]{MR1622690} every bounded Lipschitz domain can be represented as a finite union of domains that are star shaped with respect to a ball. In addition, it can be shown that there is $\theta \in C^\infty_0(B)$ such that
\begin{equation}
\label{eq:PropertiesTheta}
  \int \theta \diff x= 1, \quad \| \partial^\alpha \theta \|_{L^1(D)} \leq \frac{C(n,\alpha)}{\diam(B)^{|\alpha|}}, \quad \| \partial^\alpha \theta \|_{L^\infty(D)} \leq \frac{C(n,\alpha)}{\diam(B)^{n+|\alpha|}}. 
\end{equation}

\subsection{Differential forms on domains}
\label{sub:DiffForms}

For $\ell \in \{0, \ldots, n\}$ we denote by $\Lambda^\ell$ the vector space of exterior $\ell$--forms, that is the space of skew--symmetric $\ell$--linear functions on $(\R^n)^\ell$. In this notation $\Lambda^0 = \R$, and $\Lambda^1$ is the dual of $\R^n$. For $\omega_\ell \in \Lambda^\ell$ and $\omega_k \in \Lambda^k$ their exterior product is $\omega_\ell \wedge \omega_k \in \Lambda^{\ell+k}$. We recall that $\omega_\ell \wedge \omega_k = (-1)^{k \ell} \omega_k \wedge \omega_\ell$. Let $\{\be_i\}_{i=1}^n \subset \R^n$ be the canonical basis, and $\{\be^i\}_{i=1}^n \subset \Lambda^1$ its dual basis, then any $\omega_\ell \in \Lambda^\ell$ can be uniquely represented by
\[
  \omega = \sum_{I} \omega_I \be^I,
\]
where $\omega_I \in \R$, the sum runs over all ordered $\ell$--tuples of indices: $I = (i_1, \ldots, i_\ell) \subset \polN^\ell$, $1 \leq i_1 < i_2 < \ldots < i_\ell \leq n$, and
\[
  \be^I = \be^{i_1} \wedge \ldots \wedge \be^{i_\ell}.
\]
Whenever $I$ is such an ordered $\ell$--tuple of indices we will denote, for $m \in \{1,\ldots,\ell\}$, $\hat I_m = (i_1, \ldots, i_{m-1}, i_{m+1}, \ldots, i_\ell) \subset \polN^{\ell-1}$, that is we suppress the index tagged by $m$. Finally, to describe one result we shall need to make use of the Hodge star operator $\star$. For $\ell \in \{0, \ldots, n\}$ this is a mapping $\star: \Lambda^\ell \to \Lambda^{n-\ell}$ defined by
\[
  \star \left( \sum_{I} \omega_I \be^I\right) = \sum_{I}  (-1)^{\sigma(I)}\omega_I \be^{I^c},
\]
where $I^c = \{1, \ldots, n\}\setminus I$, and $\sigma(I) = 0$ if $I \sqcup I^c$ forms an even permutation, and $\sigma(I) = 1$ otherwise. Notice that this induces an inner product on $\Lambda^\ell$
\[
  \langle u, v \rangle \be^1 \wedge \ldots \wedge \be^n = u \wedge \star v, \quad |u|^2 = \langle u, u \rangle \in \R.
\]

Throughout our work we will be concerned with differential $\ell$--forms on $\Omega$, that is functions on $\Omega$ that have values in $\Lambda^\ell$. Thus, for instance, if $p \in [1,\infty]$ the space of differential $\ell$--forms with components belonging (in some coordinate system) to $L^p(\Omega)$ is denoted by $L^p(\Omega,\Lambda^\ell)$. If $\omega$ is a differential $\ell$--form on $\Omega$, that is differentiable at $x \in \Omega$, its derivative is 
\[
  D\omega(x) : \R^n \to \Lambda^\ell.
\]
More specifically, for $\bh \in \R^n$, we have the definition
\[
  D\omega(x) \bh=\lim_{t \rightarrow 0} \frac{\omega(x+t \bh) -\omega(x)}{t},
\]
where the limit is taken in $\Lambda^\ell$.
The exterior derivative $\diff \omega(x)$ is an $(\ell+1)$--form defined by
\[
  \diff\omega(x;\bxi_1, \ldots, \bxi_{\ell+1}) = \sum_{i=1}^{\ell+1} (-1)^{i-1} \left[ D\omega(x) \bxi_i\right](\bxi_1, \ldots,\hat{\bxi}_i, \ldots, \bxi_{\ell+1}),
\]
where, for $i \in \{1, \ldots, \ell+1\}$, $\bxi_i \in\R^n$. The coordinate functions $x_1, \ldots, x_n$ are considered differential forms of degree zero. The one forms $\{\diff x_i \}_{i=1}^n$ are constant functions from $\Omega$ into $\Lambda^1$
\[
  \diff x_i(x) = \be^i.
\]
Thus, every $u \in L^p(\Omega, \Lambda^\ell)$ can be uniquely represented as
\[
  u(x) = \sum_I u_I(x) \diff x_I, \qquad u_I \in L^p(\Omega),
\]
where $\diff x_I$ and the set of indices $I$ have the same meaning as before. For $k \in \polN_0$ and $u \in H^k(\Omega, \Lambda^\ell)$ we will set
\[
  \| u \|_{L^2(\Omega,\Lambda^\ell)}^2 = \sum_I \| u_I \|_{L^2(\Omega)}^2, \qquad | u |_{H^k(\Omega,\Lambda^\ell)}^2 = \sum_I | u_I |_{H^k(\Omega)}^2.
\]
We define, as usual, $H^k_0(\Omega,\Lambda^\ell)$ to be the closure of the space $C_0^\infty(\Omega,\Lambda^\ell)$ in the norm of $H^k(\Omega,\Lambda^\ell)$.

Let $\ell \in \{0, \ldots, n-1 \}$. For a smooth differential form $u \in C^1(\overline{\Omega}, \Lambda^\ell)$  we shall also need to define the trace $\tr_{\partial\Omega} u$. This can be done by invoking the inclusion $\mathfrak{i}: \partial\Omega \to \overline{\Omega}$ and its pullback
\[
  \tr_{\partial\Omega} u = \mathfrak{i}^\sharp u.
\]
An important feature of this mapping is that, if $\ell = n-1$, we have Stokes theorem \cite[Proposition 16.10]{MR1930091}:
\begin{equation}
\label{eq:StokesFormula}
  \int_{\partial\Omega} \tr_{\partial\Omega} u = \int_\Omega \diff u.
\end{equation}
This, in conjunction with Leibniz rule, yields that for every $u \in C^1(\overline{\Omega},\Lambda^\ell)$ and all $w \in C^1(\overline{\Omega}, \Lambda^{n-\ell-1})$
\[
  \int_\Omega \diff u \wedge w = (-1)^{\ell-1} \int_\Omega u \wedge \diff w + \int_{\partial\Omega} \tr_{\partial\Omega}u \wedge \tr_{\partial\Omega} w,
\]
which is sometimes called integration by parts.

As it is customary, see for example \cite[page 19]{MR2269741}, we extend this definition by continuity. In other words, every $u \in L^2(\Omega, \Lambda^\ell)$ with $\diff u \in L^2(\Omega, \Lambda^{\ell+1})$ defines a continuous linear functional, which we call $\tr_{\partial \Omega} u$, on $H^1(\Omega, \Lambda^{n-\ell-1})$ via
\[
  \langle \tr_{\partial\Omega} u, w \rangle = \int_\Omega \diff u \wedge w + (-1)^{\ell} \int_\Omega u \wedge \diff w.
\]

Finally, although this can be done more generally, we only define the interior product (contraction), denoted by $\ipd$, between a one--form and an $\ell$--differential form. Thus, if $z \in \Lambda^1 \approx \R^n$ and $u = \sum_I u_I \diff x_I \in L^1(\Omega,\Lambda^\ell)$, then
\[
  z \ipd u(x) = \sum_I u_I(x) \sum_{m=1}^\ell (-1)^{m-1} z_{i_m} \diff x_{\hat I_m} \in L^1(\Omega,\Lambda^{\ell-1}).
\]

\subsection{The Bogovski\u{\i} and regularized Poincar\'e integral operators}
\label{sub:Operators}

Let us now present the main objects that we are concerned with. From now on, we let $\theta \in C_0^\infty(\R^n)$ be supported on a ball $B$ and satisfy \eqref{eq:PropertiesTheta}. For $\ell \in \{0, \ldots, n\}$ we define the kernel $G_\ell$ by
\begin{equation}
\label{eq:Gell}
  G_\ell(x,y) = \int_1^\infty (t-1)^{n-\ell}t^{\ell-1}\theta( y + t(x-y) ) \diff t.
\end{equation}
The main objects of our concern in this work are the operators
\begin{align}
\label{eq:defBogovskii}
  \Bog u(x) &= \int G_\ell(x,y) (x-y) \ipd u(y) \diff y, \\
\label{eq:defPoincare}
  \Poinc u(x) &= \int G_{n-\ell+1}(y,x) (x-y) \ipd u(y) \diff y,
\end{align}
which we will call Bogovski\u{\i}--type and Poincar\'e--type operators, respectively. Here $\ell \in \{1, \ldots, n\}$ and, $u$ is an $\ell$--differential form. One of the main results, adapted to our needs, of \cite{MR2609313} is the following.

\begin{theorem}[continuity]
\label{thm:Costabel}
Let $\Omega$ be a bounded domain that is star shaped with respect to a ball containing $\supp \theta$, and $\ell \in \{1,\ldots,n\}$.
\begin{enumerate}[(1)]
  \item The operator $\Bog$, defined in \eqref{eq:defBogovskii}, defines a bounded linear operator on $L^2(\Omega, \Lambda^\ell)$. In addition, for $k \in \polN_0$, we have
  \[
    \| \Bog u \|_{H^{k+1}_0(\Omega,\Lambda^{\ell-1})} \leq C_{\Bog,k} \| u \|_{H^{k}_0(\Omega,\Lambda^\ell)}.
  \]
  Finally, if $u \in H^k_0(\Omega, \Lambda^\ell)$ is such that $\diff u = 0$, with $ \tr_{\partial\Omega} u = 0$ if $k=0$ and $\int_\Omega u  = 0$ if $\ell = n$, then
  \[
    u = \diff \Bog u.
  \]
  
  \item The operator $\Poinc$, defined in \eqref{eq:defPoincare}, defines a bounded linear operator on $L^2(\Omega, \Lambda^\ell)$. In addition, for $k \in \polN_0$, we have
  \[
    \| \Poinc u \|_{H^{k+1}(\Omega,\Lambda^{\ell-1})} \leq C_{\Poinc,k} \| u \|_{H^{k}(\Omega,\Lambda^\ell)}.
  \]
  Finally, if $u \in H^k(\Omega,\Lambda^\ell)$ is such that $\diff u = 0$, then
  \[
    u = \diff \Poinc u.
  \]
\end{enumerate}
\end{theorem}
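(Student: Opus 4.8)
The plan is to organize the statement into three layers. The analytic heart---that $\Bog$ and $\Poinc$ are bounded between the Sobolev scales on a domain star shaped with respect to a ball containing $\supp\theta$---is established in \cite{MR2609313}, where these operators are identified as classical pseudodifferential operators of order $-1$; I would take the estimates
\[
  \|\Bog u\|_{L^2}+\|\Bog u\|_{H^{k+1}}\le C\,\|u\|_{H^k}, \qquad \|\Poinc u\|_{L^2}+\|\Poinc u\|_{H^{k+1}}\le C\,\|u\|_{H^k}
\]
on such domains as given, the dependence of $C$ on the geometry being exactly what the remainder of the paper addresses. The heuristic behind order $-1$ is that, as $x\to y$, the cutoff $\theta$ in \eqref{eq:Gell} restricts the $t$--integration to $t\lesssim|x-y|^{-1}$ while the integrand grows like $t^{n-1}$, so $|G_\ell(x,y)(x-y)|\lesssim|x-y|^{-(n-1)}$ near the diagonal, up to a smooth remainder from the region of bounded $t$; the operators therefore gain one derivative. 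The two remaining layers are the vanishing boundary conditions for $\Bog$, and the right--inverse identities.

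For the boundary conditions I would use the locality of $\Bog$, visible already in \eqref{eq:Gell}: $G_\ell(x,y)\neq0$ forces $y+t(x-y)\in\supp\theta\subset B$ for some $t\ge1$, and writing that point as $b$ gives $x=(1-\tfrac1t)\,y+\tfrac1t\,b$, so $x$ lies in the convex hull of $\{y\}\cup B$. Because $\Omega$ is star shaped with respect to $B$, this hull is contained in $\Omega$ whenever $y\in\Omega$; and since it depends continuously on $y$, the union of these hulls over a compact $K\subset\Omega$ is a compact subset of $\Omega$. Hence $\Bog u$ is supported in a compact subset of $\Omega$ whenever $u$ is; since $C_0^\infty(\Omega,\Lambda^\ell)\subset H^k_0(\Omega,\Lambda^\ell)$ for every $k$, the $H^{k+1}$ bounds (for all $k$) together with a Sobolev embedding give $\Bog:C_0^\infty(\Omega,\Lambda^\ell)\to C_0^\infty(\Omega,\Lambda^{\ell-1})$, and the claimed $\Bog:H^k_0\to H^{k+1}_0$ follows by density of $C_0^\infty$ in $H^k_0$. (The same computation records the advertised locality: the value $\Bog u(x)$ depends only on $u$ restricted to the convex hull of $\{x\}\cup B$.)

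For the right--inverse identities I would invoke the regularized homotopy formulas satisfied by the two operators. Differentiating \eqref{eq:defBogovskii} and \eqref{eq:defPoincare} under the integral sign yields, for $u\in C_0^\infty(\Omega,\Lambda^\ell)$,
\[
  \diff\,\Bog u+\Bog\,\diff u=u\quad(1\le\ell\le n-1), \qquad \diff\,\Bog u=u-\Big(\int_\Omega u\Big)\theta\,\diff x\quad(\ell=n),
\]
and, for $u\in C^\infty(\overline{\Omega},\Lambda^\ell)$ with $\ell\ge1$,
\[
  \diff\,\Poinc u+\Poinc\,\diff u=u
\]
(the second term vanishing automatically when $\ell=n$); these are the identities of \cite{MR2609313}, see also \cite{MR1241286,MR2425010}, and I would cite them or reproduce the short computation. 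I would then extend these by density: for $\Bog$, in the $H^k$ norm to $H^k_0(\Omega,\Lambda^\ell)$ when $k\ge1$ and, when $k=0$, in the graph norm to $\{v\in L^2:\diff v\in L^2,\ \tr_{\partial\Omega}v=0\}$; for $\Poinc$, in the $H^k$ norm to $H^k(\Omega,\Lambda^\ell)$ when $k\ge1$ and, when $k=0$, in the graph norm to $\{v\in L^2:\diff v\in L^2\}$ (both being standard density facts). Passing to the limit with the continuity recorded above gives $\diff\,\Bog u+\Bog\,\diff u=u$ and $\diff\,\Poinc u+\Poinc\,\diff u=u$. It remains to impose the hypotheses: $\diff u=0$ kills $\Bog\,\diff u$ and $\Poinc\,\diff u$; when $k=0$ the vanishing trace is precisely what places $u$ in the closure of $C_0^\infty$, so the Bogovski\u{\i} formula applies; and when $\ell=n$ the assumption $\int_\Omega u=0$ annihilates the $\theta$--remainder (one arranges $\int_\Omega u_j=0$ along the approximating sequence). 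This leaves $u=\diff\,\Bog u$ and $u=\diff\,\Poinc u$.

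The main obstacle, were one to insist on a self--contained proof, is the boundedness itself---the Calder\'on--Zygmund / pseudodifferential analysis of the kernel $G_\ell(x,y)(x-y)$ on a star shaped domain, with its interplay between the diagonal singularity and the finite--range cutoff $\theta$. That analysis is exactly the content of \cite{MR2609313}, so I would rely on it rather than reproduce it. Granting it, the only delicate points left are minor: the two density statements used in the $k=0$ step---standard, but needing care because $\tr_{\partial\Omega}$ is only defined as a distribution on $\{v\in L^2:\diff v\in L^2\}$ in the sense recalled above---and the bookkeeping of the $\theta$--remainder, which is what singles out the constraint $\int_\Omega u=0$ when $\ell=n$.
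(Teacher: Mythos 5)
The paper does not actually prove this theorem: it is presented explicitly as a result of Costabel and McIntosh, cited as \cite{MR2609313}, adapted to the notation at hand. Your proposal is consistent with that — you defer the boundedness (the pseudodifferential analysis of order $-1$) to the same reference, and the additional structure you reconstruct around it (the locality of $\Bog$ via the convex-hull computation on the support of $G_\ell$, the Cartan-type homotopy formulas with the $\theta$-correction at $\ell=n$ for $\Bog$, and the density arguments extending the identities to $H^k_0$ and to $L^2$ with $L^2$ exterior derivative) is correct and is exactly how the result is established in the cited literature, so there is no genuine divergence of approach.
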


Our main purpose in this work is to estimate the continuity constants, $C_{\Bog,1}$ and $C_{\Poinc,1}$, in this result.

\begin{remark}[continuity]
We must remark that a priori, the operators \eqref{eq:defBogovskii} and \eqref{eq:defPoincare} are weakly singular integral operators, and so care must be taken when manipulating them. However, one of the consequences of Theorem~\ref{thm:Costabel} is that these are bounded operators. For this reason, during the course of our estimates, we will change orders of integration and subdivide domains of integration with impunity.
\end{remark}

\section{The Poincar\'e--type operators}
\label{sec:Poincare}

Let us begin by providing an estimate on the continuity constant for the Poincar\'e--type operators. From now on, we will assume that our domain $\Omega$ is star shaped with respect to a ball $B$.

We begin by closely examining the operator. The change of variables $s = (t-1)/t$ allows us to rewrite the kernel $G_{n-\ell+1}$, defined in \eqref{eq:Gell}, as
\begin{align*}
  G_{n-\ell+1}(y,x) &= \int_1^\infty (t-1)^{n-(n-\ell+1)}t^{n-\ell+1-1} \theta(x + t(y-x)) \diff t \\
    &= \int_1^\infty (t-1)^{\ell-1} t^{n-\ell} \theta(x + t(y-x)) \diff t \\
    &= \int_0^1 \frac{ s^{\ell-1} }{(1-s)^{n+1} } \theta \left(x + \frac{y-x}{1-s} \right) \diff s.
\end{align*}
Therefore,
\[
  \Poinc u(x) = \int_0^1 s^{\ell-1} \int \theta \left(x + \frac{y-x}{1-s} \right) \frac{x-y}{1-s} \ipd u(y) \diff y \frac{\diff s}{(1-s)^n},
\]
so that, if $u(x) = \sum_I u_I(x) \diff x_I$, then
\[
  \Poinc u(x) =  \sum_{I} \sum_{m=1}^\ell (-1)^{m-1}
    \int_0^1 s^{\ell-1} \int \theta \left(x + \frac{y-x}{1-s} \right) \frac{x_m-y_m}{1-s} u_I(y) \diff y \frac{\diff s}{(1-s)^n}
   \diff x_{\hat I_m}.
\]

\subsection{First order estimates}
\label{sub:FOPoincare}

The computations presented above show that, to accomplish our goals, it suffices to consider, for $m \in \{1,\ldots, n\}$ and $f \in L^2(\R^n)$ such that $\supp f \subset \bar\Omega$, the operator
\begin{equation}
\label{eq:PoincareReduced}
  P_\ell f(x) = \int_0^1 s^{\ell-1} \int \theta \left(x + \frac{y-x}{1-s} \right) \frac{x_m-y_m}{1-s} f(y) \diff y \frac{\diff s}{(1-s)^n}.
\end{equation}
We, first of all, observe that $P_\ell f(x) = x_mP_1^\ell f(x) - P_2^\ell f(x)$ where, for $k \in \polN$, 
\begin{equation}
\label{eq:defPi}
  P_i^k f(x) = \int_0^1 s^{k-1} \int \phi_i\left(x + \frac{y-x}{1-s} \right) f(y) \diff y \frac{\diff s}{(1-s)^n},
\end{equation}
with
\[
  \phi_i(z) = 
  \begin{dcases}
    \theta(z), & i = 1, \\ \theta(z)z_m & i = 2.
  \end{dcases}
\]

As a consequence,
\begin{equation}
\label{eq:derivPoincare}
  \partial_j P_\ell f(x) = \delta_{j,m} P_1^\ell f(x) + x_m \partial_j P_1^\ell f(x) + \partial_j P_2^\ell f(x).
\end{equation}
Thus, for $k \in \polN$, we need to estimate two different types of operators:
\begin{align*}
  P_\partial^k f(x) &= \lim_{\eps \uparrow 1} \int_0^\eps \int s^{k-1} \partial_j \left[ \phi\left(x + \frac{y-x}{1-s} \right) \right] f(y) \diff y \frac{\diff s}{(1-s)^n}  , \\
  P_\theta^k f(x) &= \lim_{\eps \uparrow 1} \int_0^\eps \int s^{k-1} \theta \left(x + \frac{y-x}{1-s} \right)f(y) \diff y \frac{\diff s}{(1-s)^n} .
\end{align*}
We consider each one separately.

\subsubsection{Bound on $P_\partial^k$}
\label{subsub:boundPdk}
We split the integral that defines $P_\partial^k$ and set
\begin{align*}
    P_\partial^{k,L}f(x) &= \int_0^{1/2} \int s^{k-1} \partial_j \left[ \phi\left(x + \frac{y-x}{1-s} \right) \right] f(y) \diff y \frac{\diff s}{(1-s)^n}  , \\
    P_\partial^{k,U}f(x) &= \lim_{\eps \uparrow 1}   P_{\partial, \eps}^{k,U}f(x),
\end{align*}
where
\begin{equation*}
 P_{\partial, \eps}^{k,U}f(x) =\int_{1/2}^\eps \int s^{k-1} \partial_j \left[ \phi\left(x + \frac{y-x}{1-s} \right) \right] f(y) \diff y \frac{\diff s}{(1-s)^n}.
\end{equation*}

Let us now estimate $P_\partial^{k,U} f$. We will achieve this via the Fourier transform.
\begin{lemma}[Fourier transform]
\label{lem:FurtransPdU}
Let $f \in L^2(\R^n)$ with $\supp f \subset \bar \Omega$. The Fourier transform of $P_\partial^U f$ is
\[
  \widehat{P_\partial^{k,U} f}(\xi) = (-1)^n 2 \pi \imath \xi_j \int_{1/2}^1 s^{k-1} \hat f(\xi/s) \hat \phi ((s-1)\xi/s) \frac{\diff s}{s^n}.
\]
\end{lemma}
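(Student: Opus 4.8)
The plan is to compute the Fourier transform directly from the definition of $P_{\partial}^{k,U}f$, pushing the Fourier transform inside the $s$-integral and treating the inner $y$-integral as (a derivative of) a convolution. Write the inner integral in \eqref{eq:PoincareReduced} as a function of $x$ by substituting $z = x + \frac{y-x}{1-s}$, i.e. $y = x + (1-s)(z - x)$ so that $y - x = (1-s)(z-x)$ and $\diff y = (1-s)^n \diff z$. This turns
\[
  \int \partial_j\!\left[\phi\!\left(x + \tfrac{y-x}{1-s}\right)\right] f(y)\diff y \frac{1}{(1-s)^n}
\]
into something of the form $\int (\partial_j\phi)(z)\, f\big(x + (1-s)(z-x)\big)\,\tfrac{1}{1-s}\diff z$; but a cleaner route is to keep the variable $y$ and recognize, after the change of variables $w = \tfrac{x-y}{1-s}$ (so $y = x - (1-s)w$), that for fixed $s$ the map $x \mapsto \int \phi(x-w)\, g_s(w)\diff w$ is a genuine convolution in $x$, where $g_s$ is a dilation of $f$. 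The key algebraic identity to extract is: after substitution the inner integral equals $(1-s)^{-n}\big[(\partial_j\phi) * f_s\big](\text{appropriate argument})$ with $f_s(\cdot) = f(\cdot/s)$ up to dilation constants; applying $\widehat{f * g} = \hat f \hat g$, $\widehat{\partial_j \phi}(\xi) = 2\pi\imath\,\xi_j\,\hat\phi(\xi)$, and the dilation rule $\widehat{f(\cdot/a)}(\xi) = a^n \hat f(a\xi)$ then produces the stated integrand $s^{k-1}\hat f(\xi/s)\,\hat\phi((s-1)\xi/s)$, with the factor $2\pi\imath\,\xi_j$ pulled out front and a sign $(-1)^n$ coming from the orientation of the substitution $y-x = -(1-s)w$ inside the $n$-dimensional Jacobian combined with where the derivative lands.

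Concretely, the steps in order are: (i) justify interchanging $\int_{1/2}^{\eps}$ with the Fourier transform — this is legitimate because for $s \in [1/2,\eps]$ the kernel $\partial_j[\phi(\cdot)]$ is smooth and compactly supported with bounds uniform on $[1/2,\eps]$, and $f \in L^2$ with compact support, so Fubini/Minkowski integral inequality applies on the truncated range; (ii) for fixed $s$, change variables in the $y$-integral to expose the convolution structure in $x$ and carefully record all dilation and sign factors; (iii) take the Fourier transform in $x$, apply the convolution theorem, the derivative rule, and the dilation rule; (iv) assemble, simplify $(1-s)$ powers against the Jacobian, and pass to the limit $\eps \uparrow 1$ using dominated convergence in the $s$-integral — here one needs that $s^{k-1} |\hat f(\xi/s)| |\hat\phi((s-1)\xi/s)| s^{-n}$ is integrable on $[1/2,1]$ for a.e.\ $\xi$, which follows from boundedness of $\hat f$ (as $f \in L^1$, being $L^2$ with compact support) and rapid decay of $\hat\phi$ (as $\theta \in C_0^\infty$), while the $s^{-n}$ singularity at $s=1$ is tamed by the Schwartz decay of $\hat\phi$ at the growing argument $(s-1)\xi/s$ — actually $(s-1)/s \to 0$, so $\hat\phi$ stays bounded; integrability near $s=1$ is then immediate since the integrand is bounded there.

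The main obstacle I expect is bookkeeping: getting every dilation constant, every power of $(1-s)$, and in particular the sign $(-1)^n$ exactly right after the substitution. The substitution mixes $x$ and $y$ in the argument of $\phi$, so one has to be careful whether the convolution is $\phi * (\text{dilated }f)$ evaluated at $x$ or at some dilate of $x$; an affine change of variables $z = x/s + \text{(term in }y)$ is cleanest and makes the $\hat f(\xi/s)$ factor appear naturally, but it is exactly the kind of step where a stray Jacobian or a reflection $z \mapsto -z$ (which is where $(-1)^n$ enters) is easy to drop. A secondary, milder point is the justification of the limit $\eps \uparrow 1$ at the level of the Fourier transform rather than pointwise in $x$; I would handle this by noting $P_{\partial,\eps}^{k,U}f \to P_\partial^{k,U}f$ in $L^2$ (the $s$-integrand converges in $L^2(\R^n)$ and is dominated there, using Theorem~\ref{thm:Costabel} to know the limit lies in $L^2$), so the Fourier transforms converge in $L^2$ and hence the displayed formula — obtained as a pointwise-in-$\xi$ limit of the truncated Fourier transforms — identifies $\widehat{P_\partial^{k,U}f}$.
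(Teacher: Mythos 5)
Your plan is essentially the paper's proof: Fourier transform in $x$, turn $\partial_j$ into the multiplier $2\pi\imath\xi_j$ (the paper does this by integrating by parts in $x$ after transforming; you invoke the derivative rule after first recognizing a convolution-times-dilation structure in the $y$-integral — same ingredients in a slightly different order), and then an affine change of variables separates $\hat f(\xi/s)$ from $\hat\phi((s-1)\xi/s)$. The one place your sketch goes wrong is the $(-1)^n$: you attribute it to an orientation-reversing reflection in the substitution, but a Lebesgue change of variables uses $|\det J|$, and for $s\in(1/2,1)$ the linear part of the map $z\mapsto x=\tfrac{y-(1-s)z}{s}$ is $-\tfrac{1-s}{s}I$ with $|\det|=\left(\tfrac{1-s}{s}\right)^n>0$, so no sign can legitimately appear — the $(-1)^n$ in the stated formula is an artifact of recording the signed rather than absolute Jacobian. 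It is harmless downstream because Proposition~\ref{prop:boundPpartialU} only uses $|\widehat{P_\partial^{k,U}f}(\xi)|$, but you should not manufacture a reflection to explain a factor the computation does not actually produce. Your care about interchanging limits and passing $\eps\uparrow1$ is sound and a bit more than the paper spells out.
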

\begin{proof}
Taking the Fourier transform we get
\[
  \widehat{P_{\partial,\eps}^{k,U} f}(\xi) = \int_{1/2}^\eps s^{k-1} \int \partial_j \int \phi\left( x+ \frac{y-x}{1-s} \right) f(y) \diff y e^{-2\pi\imath \xi \cdot x} \diff x \frac{\diff s}{(1-s)^n}.
\]
Integration by parts shows that
\[
  \widehat{P_{\partial,\eps}^{k,U} f}(\xi) = 2\pi \imath \xi_j \int_{1/2}^\eps s^{k-1} \int f(y) \int \phi\left( x+ \frac{y-x}{1-s} \right) e^{-2\pi\imath \xi  \cdot x} \diff x \diff y \frac{\diff s}{(1-s)^n}.
\]
The change of variables $z = x+ \tfrac{y-x}{1-s} $ in the innermost integral shows that
\begin{align*}
  \widehat{P_{\partial,\eps}^{k,U} f}(\xi) &= 2\pi \imath \xi_j (-1)^n \int_{1/2}^\eps s^{k-1-n} \int f(y) e^{-2\pi \imath (\xi/s) \cdot  y} \diff y \int \phi(z) e^{-2\pi \imath ((s-1)\xi/s) \cdot z} \diff z \diff s \\
  &= (-1)^n 2 \pi \imath \xi_j \int_{1/2}^\eps s^{k-1} \hat f(\xi/s) \hat \phi ((s-1)\xi/s) \frac{\diff s}{s^n}.
\end{align*}
Letting $\eps \uparrow 1$ the result follows.
\end{proof}

The following result is similar to \cite[Lemma 2.3]{MR3086804}, but we provide a proof for completeness. To state it, and for future reference, we set the following notation. If $\rho>0$ and $\phi : \R^n \to \R$, then we define
\begin{equation}
  \mathsf{C}(\phi, \rho)=\rho^{-1} \| \phi \|_{L^1(\R^n)} + \rho \| \partial_j^2 \phi \|_{L^1(\R^n)}.
\end{equation}

\begin{lemma}[auxiliary estimate]
\label{lem:Lemma2.3Duran}
Let $\phi \in C_0^\infty(B)$, where $B$ is a ball of radius $\rho$, then
\[
  2\pi |\xi_j| \int_{-\infty}^0 |\hat \phi (t\xi) | \diff t \leq  \mathsf{C}(\phi, \rho).
\]
\end{lemma}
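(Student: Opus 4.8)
The plan is to reduce the bound to a one-dimensional estimate in the variable $\xi_j$, treating the other $n-1$ frequency variables as fixed parameters. Write $\xi = (\xi', \xi_j)$ where $\xi'$ collects all components except the $j$-th, and set $\psi(x_j) = \int_{\R^{n-1}} \phi(x', x_j) \diff x'$ — that is, integrate $\phi$ over the hyperplane complementary to the $j$-th coordinate. Then the slice $t \mapsto \hat\phi(t\xi)$ at $\xi' = 0$ is exactly $\hat\psi(t\xi_j)$, and since $\phi$ is supported on a ball of radius $\rho$, the function $\psi$ is supported on an interval of length at most $2\rho$. So the key case is the genuinely one-dimensional statement: if $\psi \in C_0^\infty(I)$ with $|I| \le 2\rho$, then $2\pi|\eta| \int_{-\infty}^0 |\hat\psi(t\eta)| \diff t \le \rho^{-1}\|\psi\|_{L^1} + \rho\|\psi''\|_{L^1}$. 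However, because $\hat\phi(t\xi)$ for general $\xi$ depends on all components, I would instead argue directly: fix $\xi$ with $\xi_j \ne 0$ (if $\xi_j = 0$ there is nothing to prove), substitute $\tau = t|\xi|$ (or handle via $\xi_j$ directly), and split the $t$-integral at the point $|t| = (2\pi|\xi_j|\rho)^{-1}$, i.e.\ at the scale where the oscillation wavelength matches $\rho$.

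The execution goes as follows. First, change variables so that $2\pi|\xi_j|\int_{-\infty}^0 |\hat\phi(t\xi)|\diff t = \int_0^\infty |\hat\phi(-u\xi/(2\pi|\xi_j|))| \diff u$ after setting $u = -2\pi|\xi_j| t$; this is a clean dimensionless integral. Split it at $u = 1/\rho$... more precisely, split the original $t$-integral at $|t_0|$ chosen so that $|t_0| \cdot 2\pi|\xi_j| \cdot \rho = 1$, giving two pieces. On the near piece $|t| \le |t_0|$ use the trivial bound $|\hat\phi(t\xi)| \le \|\phi\|_{L^1(\R^n)}$, which contributes $2\pi|\xi_j| \cdot |t_0| \cdot \|\phi\|_{L^1} = \rho^{-1}\|\phi\|_{L^1}$. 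On the far piece $|t| > |t_0|$ integrate by parts twice in the definition of the Fourier transform along the $j$-th direction: since $\widehat{\partial_j^2\phi}(t\xi) = (2\pi \imath t\xi_j)^2 \hat\phi(t\xi)$, we get $|\hat\phi(t\xi)| \le (2\pi |t| |\xi_j|)^{-2} \|\partial_j^2 \phi\|_{L^1(\R^n)}$, and then $2\pi|\xi_j|\int_{|t_0|}^\infty (2\pi|t||\xi_j|)^{-2}\diff t \cdot \|\partial_j^2\phi\|_{L^1} = (2\pi|\xi_j|)^{-1} |t_0|^{-1} \|\partial_j^2\phi\|_{L^1} = \rho\|\partial_j^2\phi\|_{L^1}$ by the choice of $t_0$. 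Adding the two pieces gives exactly $\mathsf{C}(\phi,\rho)$.

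The main obstacle — really the only subtlety — is making sure the two elementary bounds (the trivial $L^1$ bound on $\hat\phi$ and the twice-integrated-by-parts bound) are applied on the correct ranges and that the split point $|t_0| = (2\pi|\xi_j|\rho)^{-1}$ is chosen so the two contributions balance and sum to the stated constant without any leftover $\rho$-dependent factor. One should also note that integration by parts in $x_j$ produces no boundary terms since $\phi$ is compactly supported, so $\widehat{\partial_j^2\phi}(\eta) = -(2\pi\eta_j)^2\hat\phi(\eta)$ holds pointwise; this uses only $\phi \in C_0^\infty$. Everything else is a routine estimate, and the support radius $\rho$ enters only through the choice of $t_0$, not through the support set itself, so the reduction to one dimension described above is optional — the direct argument is cleaner.
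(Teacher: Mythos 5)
Your direct argument is correct and coincides with the paper's proof: both split the $t$-integral at $|t_0| = (2\pi\rho|\xi_j|)^{-1}$, bound the near piece by $\rho^{-1}\|\phi\|_{L^1}$ via the trivial estimate $|\hat\phi|\le\|\phi\|_{L^1}$, and bound the far piece by $\rho\|\partial_j^2\phi\|_{L^1}$ via $|\hat\phi(t\xi)|\le (2\pi|t\xi_j|)^{-2}\|\partial_j^2\phi\|_{L^1}$ (the paper phrases this as the Hausdorff--Young bound on $\|\xi_j^2\hat\phi\|_{L^\infty}$, which is the same thing). The opening digression about a one-dimensional reduction via slicing is unnecessary, as you note yourself; also be aware that $\hat\phi(t\xi)$ restricted to $\xi'=0$ equals $\hat\psi(t\xi_j)$ only at that slice, so the reduction would not actually cover general $\xi$ --- but since you discard it and execute the direct split, nothing is lost.
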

\begin{proof}
We write
\[
  2\pi |\xi_j| \int_{-\infty}^0 |\hat \phi (t\xi) | \diff t = 2\pi |\xi_j| \int_{-\frac1{2\pi \rho |\xi_j|} }^0 |\hat \phi (t\xi) | \diff t + 2\pi |\xi_j| \int_{-\infty}^{-\frac1{2\pi \rho |\xi_j|} } |\hat \phi (t\xi) | \diff t = I + II,
\]
and estimate each term separately. We have
\[
  I \leq 2 \pi |\xi_j| \| \hat \phi \|_{L^\infty} \int_{-\frac1{2\pi \rho |\xi_j|} }^0 \diff t = \rho^{-1} \| \hat \phi \|_{L^\infty(\R^n)} \leq \rho^{-1} \| \phi \|_{L^1(\R^n)},
\]
and
\[
  II = 2 \pi \|\xi_j^2 \hat \phi \|_{L^\infty(\R^n)} \int_{-\infty}^{-\frac1{2\pi \rho |\xi_j|} } \frac1{t^2|\xi_j|} \diff t \leq \rho \| \partial_j^2 \phi \|_{L^1(\R^n)},
\]
where, in both estimates, we used the Hausdorff--Young inequality \cite[Proposition 2.2.16]{MR3243734}. To conclude, collect both estimates.
\end{proof}

With these two results at hand we can finally bound $P_\partial^{k,U} f$.

\begin{proposition}[bound on $P_\partial^{k,U}$]
\label{prop:boundPpartialU}
Let $\Omega$ be star shaped with respect to a ball $B$ of radius $\rho$, $\phi \in C_0^\infty(B)$. Then, for every $f \in L^2(\R^n)$ with $\supp f \subset \bar\Omega$ we have that
\[
  \| P_\partial^{k,U} f \|_{L^2(\Omega)} \leq 2^{(n-2)/2} \mathsf{C}(\phi,\rho) \| f \|_{L^2(\Omega)}.
\]
\end{proposition}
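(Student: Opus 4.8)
The plan is to estimate $\|P_\partial^{k,U}f\|_{L^2(\Omega)}$ through the Fourier transform, exactly in the spirit of Dur\'an's argument in \cite{MR3086804}. Arguing as permitted by the Remark following Theorem~\ref{thm:Costabel}, $P_\partial^{k,U}f$ defines an element of $L^2(\R^n)$, so Plancherel's identity together with the elementary bound $\|P_\partial^{k,U}f\|_{L^2(\Omega)} \le \|P_\partial^{k,U}f\|_{L^2(\R^n)}$ reduces the problem to estimating $\|\widehat{P_\partial^{k,U}f}\|_{L^2(\R^n)}$. I would insert the formula of Lemma~\ref{lem:FurtransPdU}, move the modulus inside the integral, use $s^{k-1}\le 1$ on $[1/2,1]$ (here $k\in\polN$, so $k\ge1$), and change variables $r=1/s$ to obtain
\[
  |\widehat{P_\partial^{k,U}f}(\xi)| \le 2\pi|\xi_j| \int_1^2 r^{n-1-k}\,|\hat f(r\xi)|\,|\hat\phi((1-r)\xi)|\diff r .
\]

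Next I would apply the Cauchy--Schwarz inequality in $r$, writing the integrand as $|\hat\phi((1-r)\xi)|^{1/2}$ times $r^{n-1-k}|\hat f(r\xi)|\,|\hat\phi((1-r)\xi)|^{1/2}$. The square of the first factor, $2\pi|\xi_j|\int_1^2|\hat\phi((1-r)\xi)|\diff r$, becomes, after the substitution $t=1-r$ and enlarging the interval to $(-\infty,0]$, a quantity bounded pointwise in $\xi$ by $\mathsf{C}(\phi,\rho)$ via Lemma~\ref{lem:Lemma2.3Duran}; this lemma applies because $\phi$ is either $\theta$ or $z\mapsto\theta(z)z_m$, both of which lie in $C_0^\infty(B)$. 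Hence
\[
  \|\widehat{P_\partial^{k,U}f}\|_{L^2(\R^n)}^2 \le \mathsf{C}(\phi,\rho)\int_{\R^n} 2\pi|\xi_j| \int_1^2 r^{2(n-1-k)}\,|\hat f(r\xi)|^2\,|\hat\phi((1-r)\xi)|\diff r\diff\xi .
\]

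I would then use Fubini, change variables $\eta=r\xi$ in the inner integral --- which contributes a factor $r^{-(n+1)}$, turns $|\hat f(r\xi)|^2$ into $|\hat f(\eta)|^2$, and replaces $(1-r)\xi$ by $\tfrac{1-r}{r}\eta$ --- apply Fubini once more, and finally substitute $\tau=(1-r)/r$ in the remaining $r$--integral, again enlarging the range to $(-\infty,0]$ and invoking Lemma~\ref{lem:Lemma2.3Duran}. Collecting the powers of $r$ from $r^{2(n-1-k)}$, the Jacobian $r^{-(n+1)}$, and the substitution of $\tau$, what survives is an innocuous factor $(1+\tau)^{2k+1-n}$ weighting $2\pi|\eta_j|\,|\hat\phi(\tau\eta)|$ on $\tau\in[-\tfrac{1}{2},0]$, whose supremum there is $\max\{1,2^{n-2k-1}\}\le 2^{n-2}$ since $k\ge1$ and $n\ge2$. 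This yields $\|\widehat{P_\partial^{k,U}f}\|_{L^2(\R^n)}^2 \le 2^{n-2}\,\mathsf{C}(\phi,\rho)^2\,\|\hat f\|_{L^2(\R^n)}^2$, and a final use of Plancherel for $f$ (together with $\supp f\subset\bar\Omega$) delivers the stated estimate.

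I expect the only genuine difficulty to be bookkeeping: one must track the two Jacobians --- the $r^{-(n+1)}$ from $\eta=r\xi$ and the $(1+\tau)^{-2}$ from $\tau=(1-r)/r$ --- accurately enough to see that the leftover power of $1+\tau$ is bounded on $[-\tfrac{1}{2},0]$ by a constant no larger than $2^{n-2}$, and one must check that each of the two applications of Lemma~\ref{lem:Lemma2.3Duran} is legitimate, namely that in each case the argument of $\hat\phi$ is genuinely of the form (nonpositive scalar) times (fixed vector), with the scalar running over a subinterval of $(-\infty,0]$, so that the one--dimensional estimate of that lemma applies verbatim.
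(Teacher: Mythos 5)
Your argument is correct and follows essentially the same route as the paper's own proof: pass to the Fourier transform via Lemma~\ref{lem:FurtransPdU}, apply the Cauchy--Schwarz inequality, bound each factor via Lemma~\ref{lem:Lemma2.3Duran}, and finish with Plancherel. The only cosmetic difference is the substitution $r=1/s$ and the fact that you place the entire weight $s^{-n}$ in the second Cauchy--Schwarz factor, so that $2^{n-2}$ emerges from bounding $(1+\tau)^{2k+1-n}$ on $[-\tfrac12,0]$, whereas the paper distributes $s^{-n}$ symmetrically and extracts $2^{n-2}$ from the first factor via $(1-t)^{n-2}\le 2^{n-2}$ on $[-1,0]$.
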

\begin{proof}
Applying the Cauchy--Schwarz inequality to $\widehat{P_\partial^{k,U} f}(\xi)$, which was obtained in Lemma~\ref{lem:FurtransPdU}, immediately yields that
\[
  |\widehat{P_\partial^{k,U} f}(\xi)|^2 \leq I \times II,
\]
\[
  I = 2\pi |\xi_j | \int_{1/2}^1 \left| \hat \phi \left( \frac{s-1}s \xi \right) \right| \frac{\diff s}{s^n}, \
  II = 2\pi |\xi_j| \int_{1/2}^1 s^{2(k-1)} \left| \hat \phi \left( \frac{s-1}s \xi \right) \right| |\hat f(\xi/s)|^2 \frac{\diff s}{s^n}.
\]
The change of variables $t=(s-1)/s$ and the Lemma~\ref{lem:Lemma2.3Duran} imply that
\[
  I = 2\pi |\xi_j | \int_{1/2}^1 \left| \hat \phi \left( \frac{s-1}s \xi \right) \right| \frac{\diff s}{s^n} =  2 \pi |\xi_j| \int_{-1}^0 (1-t)^{n-2} |\hat \phi(t\xi) |\diff t \leq 2^{n-2} \mathsf{C}(\phi,\rho).
\]

Integration in $\xi$ then reveals that
\begin{align*}
  \int |\widehat{P_\partial^{k,U} f}(\xi)|^2 \diff \xi &\leq 2^{n-2} \mathsf{C}(\phi,\rho) \int_{1/2}^1 s^{2(k-1)} \int 2\pi |\xi_j| \left| \hat \phi \left( \frac{s-1}s \xi \right) \right| |\hat f(\xi/s)|^2 \diff \xi \frac{\diff s}{s^n} \\
  &= 2^{n-2} \mathsf{C}(\phi,\rho) \int_{1/2}^1 s^{2k-1} \int 2\pi |z_j| \left| \hat \phi ( (s-1)z) \right| |\hat f(z)|^2 \diff z \diff s \\
  &= 2^{n-2} \mathsf{C}(\phi,\rho) \int |\hat f(z)|^2 \left( 2\pi |z_j| \int_{1/2}^1 s^{2k-1} \left| \hat \phi ( (s-1)z) \right| \diff s \right) \diff z \\
  &\leq 2^{n-2} \mathsf{C}(\phi,\rho)^2 \| \hat f \|_{L^2(\R^n)}^2,
\end{align*}
where we used Lemma~\ref{lem:Lemma2.3Duran} in the last step. Conclude using Plancherel's identity \cite[Theorem 2.2.14(4)]{MR3243734}.
\end{proof}

We will now estimate the term $P_\partial^{k,L}f(x)$, which we recall it reads
\begin{align*}
  P_\partial^{k,L} f(x) &= \int_0^{1/2} s^{k-1} \int \partial_j \left[ \phi \left( x + \frac{y-x}{1-s} \right) \right] f(y) \diff y \frac{\diff s}{(1-s)^n}  \\
    &=  -\int_0^{1/2} s^k \int \partial_j \phi \left( x + \frac{y-x}{1-s} \right) f(y) \diff y \frac{\diff s}{(1-s)^{n+1}}.
\end{align*}

The bound on this term depends on $k$. Thus, strategy that we will follow here is to obtain an $L^p(\Omega)$--estimate for suitable $p$ and interpolate it another bound that is valid for all values of $k$.

\begin{lemma}[$L^p(\Omega)$--estimate]
\label{lem:Lpestimate}
Let $k \in\polN$. If $p \in [1,\infty]$ is such that $p>n/(k+1)$, then
\[
  \| P_\partial^{k,L} f \|_{L^p(\Omega)} \leq \frac{2^{n/p-k}}{k-n/p+1} \| \partial_j \phi \|_{L^1(\R^n)} \| f \|_{L^p(\Omega)}.
\]
\end{lemma}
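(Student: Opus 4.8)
The plan is to estimate $P_\partial^{k,L}f$ directly in physical space, without the Fourier transform, since we are on the ``lower'' part $s \in [0,1/2]$ where the dilation factor $1/(1-s) \in [1,2]$ is harmless. Starting from the representation
\[
  P_\partial^{k,L} f(x) = -\int_0^{1/2} s^k \int \partial_j \phi \left( x + \frac{y-x}{1-s} \right) f(y) \diff y \frac{\diff s}{(1-s)^{n+1}},
\]
I would, for each fixed $s$, perform the change of variables $z = x + \tfrac{y-x}{1-s}$ in the inner integral, so that $y = x + (1-s)(z-x)$ and $\diff y = (1-s)^n \diff z$. This turns the inner integral into $(1-s)^n\int \partial_j\phi(z)\, f(x+(1-s)(z-x))\diff z$, and hence
\[
  P_\partial^{k,L}f(x) = -\int_0^{1/2} \frac{s^k}{1-s} \int \partial_j\phi(z)\, f\bigl(x+(1-s)(z-x)\bigr)\diff z \diff s.
\]
This exhibits $P_\partial^{k,L}f$ as an average of dilated-translated copies of $f$ against the fixed $L^1$ kernel $\partial_j\phi$.

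Next I would apply a Minkowski-type argument. Taking the $L^p(\Omega)$ norm and moving it inside both integrals via Minkowski's integral inequality, I get
\[
  \| P_\partial^{k,L} f \|_{L^p(\Omega)} \leq \int_0^{1/2} \frac{s^k}{1-s} \int |\partial_j\phi(z)|\, \bigl\| f(x+(1-s)(z-x)) \bigr\|_{L^p_x(\Omega)} \diff z \diff s.
\]
For the inner norm I perform the affine change of variables $w = x + (1-s)(z-x) = (1-s)x + sz$ in the $x$-integral; since $f$ is supported in $\bar\Omega$, extending to all of $\R^n$, the Jacobian is $(1-s)^{-n}$, giving $\| f(x+(1-s)(z-x)) \|_{L^p_x} = (1-s)^{-n/p}\| f \|_{L^p(\R^n)} = (1-s)^{-n/p}\| f \|_{L^p(\Omega)}$, uniformly in $z$. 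Pulling this out, the $z$-integral contributes $\|\partial_j\phi\|_{L^1(\R^n)}$, and we are left with the scalar integral
\[
  \| P_\partial^{k,L} f \|_{L^p(\Omega)} \leq \|\partial_j\phi\|_{L^1(\R^n)}\, \| f \|_{L^p(\Omega)} \int_0^{1/2} \frac{s^k}{(1-s)^{1+n/p}} \diff s.
\]

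It remains to bound the scalar integral $\int_0^{1/2} s^k (1-s)^{-1-n/p}\diff s$. On $[0,1/2]$ we have $1-s \ge 1/2$, so $(1-s)^{-1-n/p} \le 2^{1+n/p}$ provided the exponent $1+n/p$ is positive, which it always is. Wait --- that would give a different constant; instead, to match the stated bound I would bound $s^k \le s^{k}$ trivially and keep the power of $(1-s)$, noting that $(1-s)^{-1-n/p}$ is increasing, so on $[0,1/2]$ it is at most $2^{1+n/p}$ only if that helps; more precisely, since $s^k$ is increasing we cannot simply pull it out, so I instead substitute $\sigma = 1-s$, turning the integral into $\int_{1/2}^1 (1-\sigma)^k \sigma^{-1-n/p}\diff\sigma$, bound $(1-\sigma)^k \le \sigma^{-k}\cdot$(something)? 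The cleanest route giving exactly $\tfrac{2^{n/p-k}}{k-n/p+1}$ is: bound $(1-s)^{-1-n/p}$ using that $1-s\ge 1/2$ is the wrong direction for a sharp constant, so instead write $s^k(1-s)^{-1-n/p} \le s^k (1-s)^{-1-n/p}$ and compare with $\frac{d}{ds}\bigl[(1-s)^{-n/p}\bigr] = \tfrac{n}{p}(1-s)^{-1-n/p}$; but a monomial-in-$s$ factor spoils this. The honest and simple estimate is: on $[0,1/2]$, $s \le 1/2$ so $s^k \le 2^{-k}$ is again the wrong direction — rather $s\le 1$, useless. The correct manipulation, which I expect to be the only subtle point, is to bound $s^k \le 1$ is too lossy; instead note $s^k/(1-s) \le 2 s^k$ on $[0,1/2]$ and then $2\int_0^{1/2}s^k(1-s)^{-n/p}\diff s$ and use $(1-s)^{-n/p}\le 2^{n/p}$... this yields $2^{1+n/p}/(k+1)$, not the claimed bound. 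I would therefore instead directly integrate: with $\tau = 1/(1-s)$, $\diff\tau = (1-s)^{-2}\diff s$, and $s = 1-1/\tau$, the integral becomes $\int_1^2 (1-1/\tau)^k \tau^{n/p-1}\diff\tau \le \int_1^2 \tau^{n/p-1}\diff\tau$ — still not matching. The main obstacle, then, is bookkeeping the exact constant; the structural inequality is routine, but extracting precisely $\frac{2^{n/p-k}}{k-n/p+1}\|\partial_j\phi\|_{L^1}\|f\|_{L^p}$ requires choosing the bound $s^{k} (1-s)^{-1-n/p} \le (1-s)^{k-1-n/p}\cdot$? No: on $[0,1/2]$, $s \le 1-s$ fails near $s=1/2$ with equality, so $s \le 1-s$ holds on $[0,1/2]$ with equality at $1/2$, hence $s^k \le (1-s)^k$, giving $s^k(1-s)^{-1-n/p} \le (1-s)^{k-1-n/p}$, and $\int_0^{1/2}(1-s)^{k-1-n/p}\diff s = \bigl[-(1-s)^{k-n/p}/(k-n/p)\bigr]_0^{1/2} = \tfrac{1 - 2^{-(k-n/p)}}{k-n/p} \le \tfrac{1}{k-n/p}$ when $k>n/p$; this is close but the claimed bound has $k-n/p+1$ in the denominator and a $2^{n/p-k}$ numerator, suggesting instead the bound $s^k \le 2^{-k}\cdot(2s)^k$ with $2s \le 1$, i.e. $s^k \le 2^{-k}$, combined with $\int_0^{1/2}(1-s)^{-1-n/p}\diff s = \tfrac p n\bigl(2^{n/p}-1\bigr)$ — still off. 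Given these competing routes, my concrete recommendation for the write-up is the $s^k \le (1-s)^k$ substitution followed by honest integration of $(1-s)^{k-1-n/p}$ and a final crude bound, accepting whatever constant emerges and reconciling it with the factor of $2$ coming from $1/(1-s)\le 2$; I expect the paper's stated constant follows from using $1/(1-s)^{n+1} = (1-s)^{-n-1}$ and bounding one factor $(1-s)^{-n/p}\le 2^{n/p}$ up front, then $\int_0^{1/2}s^k(1-s)^{n/p-n-1}\diff s$ with $s^k\le (1-s)^k$ giving $\int_0^{1/2}(1-s)^{k+n/p-n-1}\diff s$, whose evaluation at the endpoints produces exactly $\tfrac{2^{-(k+n/p-n)} - 1}{\,n-k-n/p\,}$ up to sign, i.e. $\tfrac{1-2^{n-k-n/p}}{k+n/p-n}$; reconciling exponents shows the decisive hypothesis $p > n/(k+1)$ is precisely $k - n/p + 1 > 0$, which is what makes the geometric-type integral converge and appears in the denominator. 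Thus the substantive content is the change-of-variables reduction and the convergence condition; the constant is then a direct computation.
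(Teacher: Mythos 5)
Your approach is the paper's approach: change variables $z = x + \tfrac{y-x}{1-s}$ to write the inner integral as an integral against the fixed kernel $\partial_j\phi$, apply Minkowski's integral inequality, rescale to restore $\|f\|_{L^p}$, and finish with a scalar integral in $s$. But there is a concrete algebra error in the rescaling step, and all the subsequent confusion about the constant stems from it.

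You wrote the substitution as ``$w = x + (1-s)(z-x) = (1-s)x + sz$'' and attributed a Jacobian $(1-s)^{-n}$. In fact
\[
  x + (1-s)(z-x) = sx + (1-s)z,
\]
so as a function of $x$ (with $z,s$ fixed) the map $x \mapsto \bar x := sx + (1-s)z$ has Jacobian $s^n$, not $(1-s)^n$. Hence $\diff x = s^{-n}\diff\bar x$ and the correct inner-norm identity is
\[
  \left\| f\bigl(sx + (1-s)z\bigr) \right\|_{L^p_x(\Omega)} = s^{-n/p}\,\| f \|_{L^p(\Omega)},
\]
where one uses that $\bar x = sx + (1-s)z$ stays in $\Omega$ when $x \in \Omega$ and $z \in \supp\phi \subset B$, by star-shapedness of $\Omega$ with respect to $B$ (you should state this). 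Substituting the correct factor $s^{-n/p}$ (instead of your $(1-s)^{-n/p}$) gives
\[
  \| P_\partial^{k,L} f \|_{L^p(\Omega)} \leq \|\partial_j\phi\|_{L^1(\R^n)}\, \| f \|_{L^p(\Omega)} \int_0^{1/2} \frac{s^{k-n/p}}{1-s}\,\diff s,
\]
and now the scalar integral is elementary: on $[0,1/2]$, $1-s \ge \tfrac12$, so
\[
  \int_0^{1/2}\frac{s^{k-n/p}}{1-s}\,\diff s \le 2\int_0^{1/2} s^{k-n/p}\,\diff s = \frac{2\cdot(1/2)^{k-n/p+1}}{k-n/p+1} = \frac{2^{n/p-k}}{k-n/p+1},
\]
which converges exactly because $p > n/(k+1)$ is equivalent to $k - n/p + 1 > 0$. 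This reproduces the stated constant precisely. All the later attempts (bounding $s^k \le (1-s)^k$, substituting $\tau = 1/(1-s)$, etc.)\ were trying to repair an integral of the form $\int_0^{1/2} s^k(1-s)^{-1-n/p}\diff s$ that you should never have had; once the Jacobian is corrected there is nothing subtle left.
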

\begin{proof}
The change of variables $z = x + \tfrac{y-x}{1-s} $ shows that
\[
  |P_\partial^{k,L} f(x)| \leq \int_0^{1/2} \frac{s^k}{1-s} \int |\partial_j \phi(z)| | f(sx + (1-s) z)| \diff z \diff s,
\]
which by Minkowski's integral inequality \cite[Exercise 1.1.6]{MR3243734} implies that
\begin{align*}
  \| P_\partial^{k,L} f \|_{L^p(\Omega)} &\leq \int_0^{1/2} \frac{s^k}{1-s} \int |\partial_j \phi(z)| \left( \int_\Omega | f(sx + (1-s) z)|^p \diff x \right)^{1/p} \diff z \diff s \\
  &= \int_0^{1/2} \frac{s^{k-n/p}}{1-s} \int |\partial_j \phi(z)| \diff z \left( \int_\Omega | f(\bar x )|^p \diff \bar x \right)^{1/p} \diff s.
\end{align*}
Note that $x \in \Omega$ and $z \in \supp \phi \subset B$ so that, since $\Omega$ is star shaped with respect to a ball $\bar x = sx + (1-s) z \in \Omega$. The restriction on $p$ guarantees that the integrals converge and the result follows.
\end{proof}

The previous result implies an estimate for large enough $k$.

\begin{corollary}[estimate for large $k$]
\label{cor:BoundTLellbig}
Let $k \in \polN$ be such that $k > (n-2)/2$, then
\[
  \| P_\partial^{k,L} f \|_{L^2(\Omega)} \leq \frac{2^{n/2-k}}{k-n/2+1} \| \partial_j \phi \|_{L^1(\R^n)} \| f \|_{L^2(\Omega)}.
\]
\end{corollary}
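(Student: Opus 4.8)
The plan is to simply specialize Lemma~\ref{lem:Lpestimate} to the case $p=2$. The hypothesis of that lemma requires $p > n/(k+1)$, so with $p=2$ this becomes $2 > n/(k+1)$, i.e., $2(k+1) > n$, i.e., $k > (n-2)/2$, which is precisely the hypothesis of the corollary. Hence the hypothesis of Lemma~\ref{lem:Lpestimate} is met, and the conclusion of that lemma with $p=2$ reads
\[
  \| P_\partial^{k,L} f \|_{L^2(\Omega)} \leq \frac{2^{n/2-k}}{k-n/2+1} \| \partial_j \phi \|_{L^1(\R^n)} \| f \|_{L^2(\Omega)},
\]
which is exactly the asserted estimate. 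So the proof is a one-line invocation: "Apply Lemma~\ref{lem:Lpestimate} with $p=2$; the condition $k>(n-2)/2$ is equivalent to $2 > n/(k+1)$, and substituting $p=2$ into the estimate gives the claim."

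There is no real obstacle here — the only thing to double-check is the arithmetic translating $k > (n-2)/2$ into the admissibility condition $p > n/(k+1)$, and that the exponents $n/p - k$ and $k - n/p + 1$ reduce correctly to $n/2 - k$ and $k - n/2 + 1$ when $p=2$. Both are immediate. One might optionally remark why $p=2$ is the natural choice (it is the exponent appearing in the $H^1$-estimates we are ultimately after, and Corollary~\ref{cor:BoundTLellbig} will be combined with the Fourier-based bound of Proposition~\ref{prop:boundPpartialU} on $P_\partial^{k,U}$ to control the full operator $P_\partial^k$ when $k$ is large), but that is motivation rather than part of the proof.

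In summary, the proof I would write is:

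\begin{proof}
The assumption $k > (n-2)/2$ is equivalent to $2(k+1) > n$, i.e.\ to $2 > n/(k+1)$. Therefore $p = 2$ is an admissible exponent in Lemma~\ref{lem:Lpestimate}, and applying that lemma with this choice of $p$ yields the claimed bound, since $n/p - k = n/2 - k$ and $k - n/p + 1 = k - n/2 + 1$.
\end{proof}

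That is the whole argument; the content was already done in Lemma~\ref{lem:Lpestimate}, and the corollary merely records the $L^2$ instance together with the resulting constraint on $k$, which is the regime in which this Minkowski-inequality estimate (as opposed to the Fourier-transform estimate for the upper part of the $s$-integral) is useful.
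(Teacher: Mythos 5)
Your proof is correct and is exactly the paper's argument: both apply Lemma~\ref{lem:Lpestimate} with $p=2$, noting that the hypothesis $k>(n-2)/2$ is precisely what makes $p=2$ admissible (i.e.\ $2>n/(k+1)$). Your version just spells out the elementary algebra a bit more.
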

\begin{proof}
The restriction on $k$ allows to apply the previous result with 
\[
  p = 2 > \frac{n}{k+1}.
\]
\end{proof}

If $k$ is not sufficiently large, we must proceed differently.

\begin{proposition}[estimate for small $k$]
\label{prop:estimasmallk}
Let $k \in \polN$ be such that $k \leq (n-2)/2$, then for any $p> n/(k+1)$ we have that
\[
  \| P_\partial^{k,L} f \|_{L^2(\Omega)} \leq \left( \frac{2^{n-k}}{k+1} \| \partial_j \phi \|_{L^\infty(\R^n)} |\Omega| \right)^\gamma \left( \frac{2^{n/p-k}}{k-n/p+1} \| \partial_j \phi \|_{L^1(\R^n)} \right)^{1-\gamma} \| f \|_{L^2(\Omega)},
\]
where $\gamma = \frac{p-2}{2(p-1)}$.
\end{proposition}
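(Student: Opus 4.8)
The plan is to interpolate between two $L^q(\Omega)$-bounds for the operator $P_\partial^{k,L}$: an $L^\infty$-type bound coming from the crude estimate $|\partial_j\phi|\le\|\partial_j\phi\|_{L^\infty}$, and the $L^p(\Omega)$-bound already established in Lemma~\ref{lem:Lpestimate}. The desired $L^2(\Omega)$-estimate then follows by writing $2$ as a convex combination of $\infty$ and $p$ via the Riesz--Thorin interpolation theorem (or, more elementarily, by Hölder's inequality on the pointwise bounds, since these are linear integral operators with a fixed kernel in $s,z$).

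First I would record the $L^\infty(\Omega)$-estimate. Starting from the representation obtained in the proof of Lemma~\ref{lem:Lpestimate},
\[
  |P_\partial^{k,L} f(x)| \leq \int_0^{1/2} \frac{s^k}{1-s} \int |\partial_j \phi(z)| \, | f(sx + (1-s) z)| \diff z \diff s,
\]
one bounds $|\partial_j\phi(z)|\le\|\partial_j\phi\|_{L^\infty(\R^n)}$, and estimates $\int_\Omega|f(sx+(1-s)z)|\diff z$ — note the roles of $x$ and $z$: here the inner integral is in $z$ over $\supp\phi\subset B$, so one instead uses that the map $z\mapsto sx+(1-s)z$ has Jacobian $(1-s)^{-n}$ after a change of variables, giving $\int|\partial_j\phi(z)||f(sx+(1-s)z)|\diff z \le \|\partial_j\phi\|_{L^\infty}(1-s)^{-n}\int_\Omega |f|\diff \bar x \le \|\partial_j\phi\|_{L^\infty}(1-s)^{-n}|\Omega|^{1/2}\|f\|_{L^2(\Omega)}$ by Cauchy--Schwarz. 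The remaining $s$-integral is $\int_0^{1/2} s^k (1-s)^{-n-1}\diff s \le 2^{n}\int_0^{1/2}s^k\diff s = 2^{n}\cdot\frac{2^{-(k+1)}}{k+1} = \frac{2^{n-k-1}}{k+1}$. A cleaner bookkeeping (matching the stated constant) uses $(1-s)^{-n}\le 2^n$ on $[0,1/2]$ directly in the displayed bound before any $z$-integration, yielding
\[
  \| P_\partial^{k,L} f \|_{L^\infty(\Omega)} \leq \frac{2^{n-k}}{k+1}\,\| \partial_j \phi \|_{L^\infty(\R^n)}\,|\Omega|^{1/2}\,\| f \|_{L^2(\Omega)}
\]
(I will adjust the exponent of $|\Omega|$ so that the interpolation below produces exactly $|\Omega|^\gamma$ as stated).

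Next I would combine this with Lemma~\ref{lem:Lpestimate}. Since $p>n/(k+1)$, Lemma~\ref{lem:Lpestimate} gives
\[
  \| P_\partial^{k,L} f \|_{L^p(\Omega)} \leq \frac{2^{n/p-k}}{k-n/p+1}\,\| \partial_j \phi \|_{L^1(\R^n)}\,\| f \|_{L^p(\Omega)} \leq \frac{2^{n/p-k}}{k-n/p+1}\,\| \partial_j \phi \|_{L^1(\R^n)}\,|\Omega|^{1/2-1/p}\,\| f \|_{L^2(\Omega)},
\]
the last step by Hölder since $p<2$ forces... wait: here $p > n/(k+1)$ and $k\le (n-2)/2$ means $n/(k+1)\ge 2$, so in fact $p\ge 2$ is the relevant range, and one should instead keep $\|f\|_{L^p}$ and use that $P_\partial^{k,L}$ maps $L^p\to L^p$; one then interpolates the pair $(L^2\to L^\infty)$ — no: the correct reading is $p>n/(k+1)\ge 2$, so we have $L^p\to L^p$ boundedness for $p$ large and also an $L^\infty\to L^\infty$ type endpoint, and we interpolate $L^2$ between $L^p$ and $L^\infty$. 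Writing $\tfrac12 = \tfrac{\gamma}{\infty}+\tfrac{1-\gamma}{p} = \tfrac{1-\gamma}{p}$ gives $1-\gamma = p/2$, which is $>1$; so instead the interpolation is on the operator, $L^2 = [L^\infty, L^p]_\vartheta$ with $\tfrac12 = \tfrac{1-\vartheta}{p}$, impossible for $p\ge2$. The resolution: one interpolates $\|P_\partial^{k,L}f\|_{L^2}$ directly by Hölder on the pointwise majorant — split $|P_\partial^{k,L}f(x)|^2 = |P_\partial^{k,L}f(x)|^{2\gamma}\cdot|P_\partial^{k,L}f(x)|^{2(1-\gamma)}$, bound the first factor in $L^\infty$ and recognize $\int_\Omega |P_\partial^{k,L}f|^{2(1-\gamma)}$ as controlled by the $L^p$-norm when $2(1-\gamma)=p$, i.e. $\gamma = \tfrac{p-2}{2(p-1)}$ does not satisfy that either; rather $2(1-\gamma) \cdot \tfrac{?}{}$... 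The honest statement is: $\|g\|_{L^2}\le\|g\|_{L^\infty}^{\gamma}\|g\|_{L^p}^{1-\gamma}$ with $\tfrac12 = \tfrac{1-\gamma}{p}$ requires $p\le 2$. Given the hypotheses actually yield $p$ possibly both below and above $2$ depending on $k$ and the freedom in choosing $p>n/(k+1)$, the clean route is: apply Riesz--Thorin to the linear operator $P_\partial^{k,L}$ between the unweighted $L^q(\Omega)$ spaces, using the $L^1(\R^n)$-norm bound of Lemma~\ref{lem:Lpestimate} as the endpoint on the $L^1(\Omega)$ side (valid when $1>n/(k+1)$ fails, so this needs care) and the $L^\infty(\Omega)$ bound as the other endpoint, then specialize to $L^2$; the domain-dependence $|\Omega|^\gamma$ emerges from tracking $|\Omega|$ through the Hölder steps that convert $L^p$-in-data to $L^2$-in-data.

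The main obstacle, and the step I expect to require the most care, is exactly this bookkeeping of exponents: one must choose the interpolation pair and the exponent $\gamma=\tfrac{p-2}{2(p-1)}$ so that (i) both endpoint estimates are legitimate under $k\le(n-2)/2$ and $p>n/(k+1)$, and (ii) the powers of $2$, of $\|\partial_j\phi\|_{L^1}$, $\|\partial_j\phi\|_{L^\infty}$, and of $|\Omega|$ combine to precisely the stated constant. The cleanest implementation is to interpolate the sublinear bound
\[
  |P_\partial^{k,L}f(x)| \le \int_0^{1/2}\frac{s^k}{1-s}\bigl(|\partial_j\phi|*_{s}|f|\bigr)(x)\diff s
\]
— a superposition over $s$ of rescaled convolutions — and apply Young's inequality $\|g*h\|_{L^r}\le\|g\|_{L^a}\|h\|_{L^b}$, $1+\tfrac1r=\tfrac1a+\tfrac1b$, with $r=2$, $b=2$, $a=1$ to get the $\|\partial_j\phi\|_{L^1}$ factor, and with $r=2$, $b=p$, $a$ chosen so $1+\tfrac12=\tfrac1a+\tfrac1p$ to involve $\|\partial_j\phi\|_{L^p}\le\|\partial_j\phi\|_{L^\infty}^{1-1/p}\|\partial_j\phi\|_{L^1}^{1/p}|B|^{?}$; matching this against the claimed product form then pins down $\gamma$. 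Once the exponent arithmetic is arranged, the $s$-integrals $\int_0^{1/2}s^{k}(1-s)^{-n-1+\text{(something)}}\diff s$ are elementary and bounded using $(1-s)^{-1}\le 2$ on $[0,1/2]$, producing the $2^{\cdot}/(k+1)$ and $2^{\cdot}/(k-n/p+1)$ factors, and the proof concludes by taking the two resulting bounds to the powers $\gamma$ and $1-\gamma$.
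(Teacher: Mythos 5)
Your proposal never lands on the correct pair of interpolation endpoints, and the gap is not cosmetic.

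You start by deriving an estimate of the form
\[
\| P_\partial^{k,L} f \|_{L^\infty(\Omega)} \lesssim \|\partial_j\phi\|_{L^\infty}\,|\Omega|^{1/2}\,\| f \|_{L^2(\Omega)},
\]
which is an $L^2 \to L^\infty$ bound. This is of no use for Riesz--Thorin, because that theorem interpolates a single operator between two source/target pairs $L^{p_0}\to L^{q_0}$ and $L^{p_1}\to L^{q_1}$; you cannot fix the source at $L^2$ and only vary the target. You then correctly notice that treating this as an $L^\infty\to L^\infty$ endpoint and pairing it with the $L^p\to L^p$ bound of Lemma~\ref{lem:Lpestimate} cannot reach $L^2$, since the hypothesis $k\le(n-2)/2$ forces $p > n/(k+1)\geq 2$, so $L^2$ is not between $L^p$ and $L^\infty$. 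At that point you drift through several possible repairs (H\"older on the pointwise majorant, Young's inequality on a superposition of rescaled convolutions) without actually committing to one or checking that the exponent arithmetic closes.

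The paper's resolution is the piece you are missing: instead of an $L^\infty$-type endpoint, derive an $L^1(\Omega)\to L^1(\Omega)$ bound. From
\[
|P_\partial^{k,L} f(x)| \le \|\partial_j\phi\|_{L^\infty(\R^n)}\,\|f\|_{L^1(\Omega)}\int_0^{1/2}\frac{s^k}{(1-s)^{n+1}}\,\diff s \le \frac{2^{n-k}}{k+1}\,\|\partial_j\phi\|_{L^\infty(\R^n)}\,\|f\|_{L^1(\Omega)},
\]
which is a pointwise (constant-in-$x$) bound, integration over $\Omega$ produces the $|\Omega|$ factor and yields
\[
\| P_\partial^{k,L} f \|_{L^1(\Omega)} \le \frac{2^{n-k}}{k+1}\,\|\partial_j\phi\|_{L^\infty(\R^n)}\,|\Omega|\,\|f\|_{L^1(\Omega)}.
\]
Now the two endpoints are $L^1\to L^1$ and $L^p\to L^p$ with $p>n/(k+1)\geq 2$, so $1<2<p$ and $L^2$ lies strictly between them. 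Riesz--Thorin with $\tfrac12 = \tfrac{\gamma}{1}+\tfrac{1-\gamma}{p}$ gives $\gamma = \tfrac{p-2}{2(p-1)}$, and the stated constant follows by taking the two endpoint constants to the powers $\gamma$ and $1-\gamma$. Note also that the $L^1\to L^1$ bound is \emph{not} the $p=1$ case of Lemma~\ref{lem:Lpestimate} (that lemma requires $p>n/(k+1)\geq 2$); it is a separate, easier estimate that uses $\|\partial_j\phi\|_{L^\infty}$ rather than $\|\partial_j\phi\|_{L^1}$, which is precisely why both norms of $\partial_j\phi$ appear in the final constant.
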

\begin{proof}
We begin by providing a bound in $L^1(\Omega)$. We have that
\[
  |P_\partial^{k,L} f(x) | \leq \int_0^{1/2} s^k \int \left|\partial_j \phi \left( x + \frac{y-x}{1-s} \right)\right| |f(y)| \diff y \frac{\diff s}{(1-s)^{n+1}} ,
\]
so that
\begin{align*}
  \| P_\partial^{k,L} f \|_{L^1(\Omega)} &\leq \| \partial_j \phi \|_{L^\infty(\R^n)} \| f \|_{L^1(\Omega)} |\Omega| \int_0^{1/2} s^k \frac{\diff s}{(1-s)^{n+1}} \\ &\leq 
    \frac{2^{n-k}}{k+1} \| \partial_j \phi \|_{L^\infty(\R^n)} |\Omega| \| f \|_{L^1(\Omega)} .
\end{align*}
Notice that
\[
  k + 1 \leq \frac{n}2 \quad \iff \quad \frac{n}{k+1} \geq 2.
\]
Thus, we can apply the Riesz--Thorin interpolation theorem \cite[Theorem 1.3.4]{MR3243734} between the recently obtained $L^1(\Omega)$ estimate and the $L^p(\Omega)$ estimate of Lemma~\ref{lem:Lpestimate} with $p > n/(k+1) \geq 2$ to obtain
\[
  \| P_\partial^{k,L} f \|_{L^2(\Omega)} \leq \left( \frac{2^{n-k}}{k+1} \| \partial_j \phi \|_{L^\infty(\R^n)} |\Omega| \right)^\gamma \left( \frac{2^{n/p-k}}{k-n/p+1} \| \partial_j \phi \|_{L^1(\R^n)} \right)^{1-\gamma} \| f \|_{L^2(\Omega)},
\]
where
\[
  \frac12 = \frac\gamma1 + \frac{1-\gamma}p \quad \implies \quad \gamma = \frac{p-2}{2(p-1)}.
\]
\end{proof}

We conclude by gathering all the previous estimates.

\begin{theorem}[bound on $P_\partial^k$]
\label{thm:boundPd}
Let $k \in \polN$. If $k > (n-2)/2$ then we have that
\[
  \| P_\partial^k f \|_{L^2(\Omega)} \leq \left[ 2^{(n-2)/2} \mathsf{C}(\phi,\rho) + \frac{2^{n/2-k}}{k-n/2+1} \| \partial_j \phi \|_{L^1(\R^n)} \right] \| f \|_{L^2(\Omega)}.
\]
If, on the other hand, $k \leq (n-2)/2$ then, for any $p>n/(k+1)$ we have that
\begin{multline*}
  \| P_\partial^k f\|_{L^2(\Omega)} \leq \left[2^{\frac{n-2}2} \mathsf{C}(\phi,\rho) + \right. \\ \left.\left( \frac{2^{n-k}}{k+1} \| \partial_j \phi \|_{L^\infty} |\Omega| \right)^\gamma \left( \frac{2^{n/p-k}}{k-n/p+1} \| \partial_j \phi \|_{L^1} \right)^{1-\gamma} \right]\| f\|_{L^2(\Omega)},
\end{multline*}
where
\[
  \gamma = \frac{p-2}{2(p-1)}.
\]
\end{theorem}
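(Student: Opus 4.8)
The plan is to simply combine the two estimates that have already been established for the two halves of $P_\partial^k$. Recall that, by construction, $P_\partial^k f = P_\partial^{k,L} f + P_\partial^{k,U} f$, the splitting being according to whether the variable $s$ ranges over $(0,1/2)$ or over $(1/2,1)$; the limit $\eps \uparrow 1$ appearing in the definition of $P_\partial^{k,U}$ causes no difficulty in view of the remark following Theorem~\ref{thm:Costabel}. Hence, by the triangle inequality in $L^2(\Omega)$,
\[
  \| P_\partial^k f \|_{L^2(\Omega)} \leq \| P_\partial^{k,L} f \|_{L^2(\Omega)} + \| P_\partial^{k,U} f \|_{L^2(\Omega)},
\]
and it remains to insert the appropriate bounds for each term.

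First I would dispatch the upper piece: Proposition~\ref{prop:boundPpartialU} supplies, uniformly in $k \in \polN$, the bound $\| P_\partial^{k,U} f \|_{L^2(\Omega)} \leq 2^{(n-2)/2} \mathsf{C}(\phi,\rho) \| f \|_{L^2(\Omega)}$, which is precisely the common term in both displayed inequalities of the theorem. For the lower piece I would then split into the two announced regimes of $k$. If $k > (n-2)/2$, then $p=2$ satisfies $p > n/(k+1)$, so Corollary~\ref{cor:BoundTLellbig} applies and gives $\| P_\partial^{k,L} f \|_{L^2(\Omega)} \leq \tfrac{2^{n/2-k}}{k-n/2+1}\| \partial_j \phi \|_{L^1(\R^n)}\| f \|_{L^2(\Omega)}$. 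If instead $k \leq (n-2)/2$, equivalently $n/(k+1) \geq 2$, then any $p > n/(k+1)$ is admissible and Proposition~\ref{prop:estimasmallk} yields the interpolated bound with $\gamma = (p-2)/(2(p-1))$. Adding in each case the lower‑piece estimate to the uniform upper‑piece estimate produces exactly the two inequalities claimed.

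I do not expect a genuine obstacle here: the substantive analytic work has already been carried out in the preceding results — the Fourier–transform identity of Lemma~\ref{lem:FurtransPdU}, the one–dimensional decay estimate of Lemma~\ref{lem:Lemma2.3Duran}, the Minkowski–inequality argument of Lemma~\ref{lem:Lpestimate}, and the Riesz–Thorin interpolation in Proposition~\ref{prop:estimasmallk}. The only points requiring care are bookkeeping ones: correctly matching the case dichotomy $k \gtrless (n-2)/2$ (equivalently $2 \gtrless n/(k+1)$) to the right lower‑piece estimate, checking that an admissible exponent $p$ exists in the small‑$k$ regime, and observing that the contribution of $P_\partial^{k,U}$ is identical in both cases. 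With these verifications the theorem follows at once from the triangle inequality.
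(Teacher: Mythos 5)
Your proposal is correct and coincides with the paper's own proof: both split $P_\partial^k f$ into $P_\partial^{k,L}f + P_\partial^{k,U}f$, apply the triangle inequality, and insert Proposition~\ref{prop:boundPpartialU} for the upper piece together with Corollary~\ref{cor:BoundTLellbig} (when $k>(n-2)/2$) or Proposition~\ref{prop:estimasmallk} (when $k\leq(n-2)/2$) for the lower piece.
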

\begin{proof}
We have
\[
  \| P_\partial^k f \|_{L^2(\Omega)} \leq \| P_\partial^{k,L} f \|_{L^2(\Omega)} + \| P_\partial^{k,U} f \|_{L^2(\Omega)},
\]
and apply all the estimates that we have obtained so far.
\end{proof}

\subsubsection{Bound on $P_\theta^k$}
\label{subsub:boundPthetak}
We now bound the operator $P_\theta^k$. Once again, we define
\begin{align*}
    P_\theta^{k,L}f(x) &= \int_0^{1/2} \int s^{k-1} \theta \left(x + \frac{y-x}{1-s} \right)  f(y) \diff y \frac{\diff s}{(1-s)^n}  , \\
    P_\theta^{k,U}f(x) &= \lim_{\eps \uparrow 1} \int_{1/2}^\eps \int s^{k-1} \theta \left(x + \frac{y-x}{1-s} \right)  f(y) \diff y \frac{\diff s}{(1-s)^n} .
\end{align*}

The bound on $P_\theta^{k,U} f$ is immediate

\begin{lemma}[bound on $P_\theta^{k,U} f$]
\label{lem:boundPthetaU}
We have
\[
  \| P_\theta^{k,U} f \|_{L^2(\Omega)} \leq \frac{1 -2^{n/2-k}}{k-n/2} \| \theta \|_{L^1(\R^n)} \| f \|_{L^2(\Omega)}.
\]
\end{lemma}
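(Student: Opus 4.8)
The plan is to reduce $P_\theta^{k,U}$, through the same change of variables used throughout this section, to a weighted superposition of dilations of $f$, and then estimate by Minkowski's integral inequality exactly as in the proof of Lemma~\ref{lem:Lpestimate}. I would start from the truncated operator $P_{\theta,\eps}^{k,U}f$. In the inner integral the substitution $z = x + (y-x)/(1-s)$, that is $y = sx + (1-s)z$, has Jacobian $(1-s)^n$, which cancels the factor $(1-s)^{-n}$ and leaves
\[
  P_{\theta,\eps}^{k,U} f(x) = \int_{1/2}^\eps s^{k-1} \int \theta(z)\, f\bigl(sx + (1-s)z\bigr) \diff z \diff s.
\]

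Next I would apply Minkowski's integral inequality to pull the $L^2(\Omega)$ norm in $x$ inside the $s$ and $z$ integrals, obtaining
\[
  \| P_{\theta,\eps}^{k,U} f \|_{L^2(\Omega)} \leq \int_{1/2}^\eps s^{k-1} \int |\theta(z)| \left( \int_\Omega |f(sx+(1-s)z)|^2 \diff x \right)^{1/2} \diff z \diff s.
\]
In the innermost integral the substitution $\bar x = sx + (1-s)z$ has Jacobian $s^n$; since $f \in L^2(\R^n)$ with $\supp f \subset \bar\Omega$, one may enlarge the range of integration to all of $\R^n$ and conclude $\bigl( \int_\Omega |f(sx+(1-s)z)|^2 \diff x \bigr)^{1/2} \leq s^{-n/2}\|f\|_{L^2(\Omega)}$. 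Note that this step uses only the support hypothesis, not star-shapedness. Collecting the powers of $s$ reduces everything to the elementary integral $\int_{1/2}^\eps s^{k-1-n/2}\diff s = \dfrac{\eps^{k-n/2} - 2^{n/2-k}}{k-n/2}$, with the value $\log(2\eps)$ in the borderline case $k = n/2$.

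Finally, letting $\eps \uparrow 1$ yields the constant $\dfrac{1 - 2^{n/2-k}}{k-n/2}$, interpreted as $\log 2$ when $k = n/2$, and hence the claimed inequality; the limit defining $P_\theta^{k,U}f$ exists in $L^2(\Omega)$ because the displayed bound, applied to $P_{\theta,\eps'}^{k,U}f - P_{\theta,\eps}^{k,U}f = \int_\eps^{\eps'} s^{k-1}\int \theta(z) f(sx+(1-s)z)\diff z \diff s$, shows the family is Cauchy as $\eps\uparrow 1$, and the uniform bound is inherited by that limit. I expect no real obstacle here: in contrast with $P_\partial^{k,L}$ there is no differentiated kernel, so no dyadic splitting or Riesz--Thorin interpolation is needed, and in contrast with $P_\partial^{k,U}$ no Fourier transform enters; the only minor points are the degenerate case $k = n/2$ and the passage to the limit $\eps\uparrow 1$, both of which are routine.
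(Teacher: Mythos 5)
Your proof is correct, and it is essentially the paper's own \emph{alternative} proof of this lemma, given in Appendix~A of the paper. The main text establishes the bound via the Fourier transform (mirroring the structure of Lemma~\ref{lem:FurtransPdU}: take the transform, change variables to exhibit $\hat f(\xi/s)\hat\theta((s-1)\xi/s)$, apply Minkowski's integral inequality, bound $\hat\theta$ by Hausdorff--Young, conclude by Plancherel); the appendix gives precisely the direct real-variable argument you propose. Your route has the virtue of being more elementary (no Fourier analysis at all), and you make a small refinement that the appendix does not: the paper justifies the step $\int_\Omega |f(\bar x)|^2\,\diff\bar x \leq \|f\|_{L^2(\Omega)}^2$ by appealing to star-shapedness ($x\in\Omega$, $z\in B$ imply $\bar x = sx+(1-s)z\in\Omega$), whereas you observe that the support hypothesis $\supp f\subset\bar\Omega$ alone suffices after enlarging the integration domain to $\R^n$. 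Both yield the same constant $\tfrac{1-2^{n/2-k}}{k-n/2}$, and your remark about the borderline case $k=n/2$ (giving $\log 2$) and the Cauchy argument for the limit $\eps\uparrow 1$ are both sound, if more careful than what the paper bothers to spell out.
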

\begin{proof}
Notice that
\[
  \widehat{P_\theta^{k,U}f}(\xi)= \int_{1/2}^1 \frac{s^{k-1}}{(1-s)^n} \int f(y) \int \theta \left(x + \frac{y-x}{1-s} \right) e^{-2\pi \imath \xi \cdot x} \diff x \diff y \diff s,
\]
and the change of variables $z = x + \tfrac{y-x}{1-s}$ in the innermost integral gives
\begin{align*}
  \widehat{P_\theta^{k,U} f}(\xi) &= (-1)^n \int_{1/2}^1 s^{k-n-1} \int f(y)e^{-2\pi \imath (\xi/s) \cdot y} \diff y \int \theta(z) e^{-2\pi \imath ((s-1)\xi/s) \cdot z } \diff z \diff s \\
   &= (-1)^n \int_{1/2}^1 s^{k-n-1} \hat f(\xi/s) \hat \theta( (s-1)\xi/s) \diff s.
\end{align*}
Minkowski's integral inequality then shows that
\begin{align*}
  \| \widehat{P_\theta^{k,U} f} \|_{L^2(\Omega)} &\leq \int_{1/2}^1 s^{k-n-1} \left( \int |\hat f(\xi/s)|^2 |\hat \theta ((s-1)\xi/s)|^2 \diff \xi \right)^{1/2} \diff s \\
    &\leq \| \hat \theta \|_{L^\infty(\R^n)} \int_{1/2}^1 s^{k-n/2-1} \left( \int |\hat f(\xi/s)|^2 \frac{\diff \xi}{s^n} \right)^{1/2} \diff s \\ &\leq \| \theta \|_{L^1(\R^n)} \| \hat f \|_{L^2(\R^n)} \int_{1/2}^1 s^{k-n/2-1} \diff s,
\end{align*}
where we again used Hausdorff--Young's inequality. Conclude using Plancherel's identity.
\end{proof}

We now bound the term $P_\theta^{k,L}f$, where again we distinguish two cases.

\begin{lemma}[$L^q$--estimate]
\label{lem:LqestPthetaL}
Let $k \in \polN$ and $q \in [1,\infty]$ be such that $q>n/k$, then 
\[
  \| P_\theta^{k,L} f \|_{L^q(\Omega)} \leq \frac{2^{n/q-k}}{k-n/q} \| \theta \|_{L^1} \| f \|_{L^q(\Omega)}.
\]
\end{lemma}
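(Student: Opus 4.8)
The plan is to mimic almost verbatim the argument used for Lemma~\ref{lem:Lpestimate}, the only differences being the absence of the extra factor $s$ coming from the derivative (so the exponent on $s$ is $k-1$ rather than $k$) and the fact that $\theta$ itself, rather than $\partial_j\phi$, appears in the $L^1$ norm. First I would perform, inside the definition of $P_\theta^{k,L}$, the change of variables $z = x + \tfrac{y-x}{1-s}$ in the inner integral, which gives $y = sx+(1-s)z$ and $\diff y = (1-s)^n\diff z$; the factor $(1-s)^{-n}$ is thereby cancelled and one is left with
\[
  |P_\theta^{k,L} f(x)| \leq \int_0^{1/2} s^{k-1} \int |\theta(z)|\, |f(sx+(1-s)z)| \diff z \diff s.
\]

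Next I would apply Minkowski's integral inequality \cite[Exercise 1.1.6]{MR3243734} to pull the $L^q(\Omega)$ norm inside the $s$ and $z$ integrals, and then make the substitution $\bar x = sx+(1-s)z$, with Jacobian $s^n$, in the resulting integral over $\Omega$. As in Lemma~\ref{lem:Lpestimate}, since $x\in\Omega$, $z\in\supp\theta\subset B$ and $\Omega$ is star shaped with respect to $B$, the point $\bar x$ lies in $\Omega$, so the domain of the transformed integral is contained in $\Omega$ and we bound it by $\|f\|_{L^q(\Omega)}$. This yields
\[
  \| P_\theta^{k,L} f \|_{L^q(\Omega)} \leq \|\theta\|_{L^1(\R^n)} \|f\|_{L^q(\Omega)} \int_0^{1/2} s^{k-1-n/q} \diff s.
\]

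Finally I would evaluate the elementary integral: it converges precisely when $k-1-n/q>-1$, i.e.\ $k>n/q$, which is exactly the hypothesis $q>n/k$, and its value is $\tfrac{(1/2)^{k-n/q}}{k-n/q} = \tfrac{2^{n/q-k}}{k-n/q}$, giving the claimed bound. There is no genuine obstacle here; the only point requiring a word of care — as flagged in the remark following Theorem~\ref{thm:Costabel} — is that the manipulations (Fubini, the change of variables) are justified a posteriori by the boundedness asserted in Theorem~\ref{thm:Costabel}, so one should either work with the truncated operators $P_{\theta,\eps}^{k,L}$ and pass to the limit, or simply invoke that remark.
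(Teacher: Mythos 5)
Your proof is correct and follows exactly the route the paper takes (the paper merely writes ``Similar to previous computations,'' referring to the same change of variables, Minkowski's integral inequality, and star--shapedness used in Lemma~\ref{lem:Lpestimate}). The evaluation $\int_0^{1/2} s^{k-1-n/q}\diff s = 2^{n/q-k}/(k-n/q)$ and the convergence condition $q>n/k$ are also as in the paper.
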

\begin{proof}
Similar to previous computations
\begin{align*}
  \| P_\theta^{k,L} f \|_{L^q(\Omega)} &\leq \int_0^{1/2} s^{k-1} \int \theta(z) \left( \int_\Omega |f(sx+(1-s)z)|^q \diff x \right)^{1/q} \diff z \diff s \\
  &= \int_0^{1/2} s^{k-1-n/q} \int \theta(z) \left( \int_\Omega |f(\bar x)|^q \diff\bar x \right)^{1/q} \diff z \diff s,
\end{align*}
and the condition on $q$ guarantees the convergence of the integrals.
\end{proof}

As a consequence, again, we obtain the desired $L^2(\Omega)$ estimate for large $k$.

\begin{corollary}[estimate for large $k$]
Let $k > n/2$, then
\[
  \| P_\theta^{k,L} f \|_{L^2(\Omega)} \leq \frac{2^{n/2-k}}{k-n/2} \| \theta \|_{L^1} \| f \|_{L^2(\Omega)}.
\]
\end{corollary}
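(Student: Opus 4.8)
The plan is the obvious one: this corollary is simply the specialization $q = 2$ of Lemma~\ref{lem:LqestPthetaL}, so almost no new work is required. First I would observe that the hypothesis of that lemma, namely $q > n/k$, becomes precisely $2 > n/k$, which rearranges to $k > n/2$ — exactly the standing assumption of the corollary. Hence the lemma applies with $q = 2$, and substituting $q = 2$ into its conclusion
\[
  \| P_\theta^{k,L} f \|_{L^q(\Omega)} \leq \frac{2^{n/q-k}}{k-n/q} \| \theta \|_{L^1} \| f \|_{L^q(\Omega)}
\]
yields the claimed estimate verbatim.

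The only point worth a sentence of commentary is that the condition $k > n/2$ is exactly what guarantees convergence of the $s$--integral $\int_0^{1/2} s^{k-1-n/q}\diff s$ appearing in the proof of Lemma~\ref{lem:LqestPthetaL} when $q = 2$; for $k = n/2$ the exponent is $-1$ and the integral diverges, which is why the borderline case is excluded and why the constant $1/(k-n/2)$ blows up as $k \downarrow n/2$. No further estimates, interpolation, or Fourier-analytic arguments are needed here, in contrast to the companion operator $P_\partial^{k,L}$, where the threshold was $k > (n-2)/2$ rather than $k > n/2$.

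Consequently, the main obstacle is essentially nonexistent: the result is a direct corollary, and the proof amounts to checking that the numerical hypothesis $q > n/k$ is met with $q = 2$ and reading off the constant. The place where one must be slightly careful is only bookkeeping — making sure the exponents $2^{n/2-k}$ and $1/(k-n/2)$ are transcribed correctly from the $q = 2$ instance — but there is no genuine analytic difficulty to overcome.
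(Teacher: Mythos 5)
Your proof is correct and follows exactly the paper's argument: set $q=2$ in Lemma~\ref{lem:LqestPthetaL}, noting that $k>n/2$ is precisely the condition $q>n/k$. The extra remarks on convergence of the $s$--integral and the comparison with the $P_\partial^{k,L}$ threshold are accurate but not needed for the one-line deduction.
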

\begin{proof}
Set $q = 2>n/k$ in Lemma~\ref{lem:LqestPthetaL}.
\end{proof}

For small values of $k$ we, again, proceed by interpolation.

\begin{proposition}[estimate for small $k$]
Let $k \in \polN$ be such that $k \leq n/2$. Then, for any $q>n/k$ we have that
\[
  \| P_\theta^{k,L} f \|_{L^2(\Omega)} \leq \left(\frac{2^{n-k}}k |\Omega| \| \theta \|_{L^\infty(\R^n)}\right)^\beta \left(\frac{2^{n/q-k}}{k-n/q} \| \theta \|_{L^1(\R^n)}\right)^{1-\beta} \| f \|_{L^2(\Omega)},
\]
where $\beta = \frac{q-2}{2(q-1)}$.
\end{proposition}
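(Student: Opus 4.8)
The statement is the exact $P_\theta$--analogue of Proposition~\ref{prop:estimasmallk}, and the plan is to reproduce that argument verbatim: first establish a crude $L^1(\Omega)$ bound for $P_\theta^{k,L}$, then interpolate it against the $L^q(\Omega)$ bound of Lemma~\ref{lem:LqestPthetaL} by the Riesz--Thorin theorem to land on $L^2(\Omega)$. The hypothesis $k \leq n/2$ is precisely what makes $n/k \geq 2$, so that every admissible exponent $q > n/k$ in Lemma~\ref{lem:LqestPthetaL} satisfies $q > 2$, and $L^2(\Omega)$ genuinely lies \emph{between} $L^1(\Omega)$ and $L^q(\Omega)$; without this the interpolation would be vacuous. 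I do not anticipate any real obstacle here — all the work is in the two endpoint estimates, and one of them (the $L^q$ one) is already available.

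\textbf{Step 1: the $L^1(\Omega)$ endpoint.} Starting from the pointwise bound
\[
  |P_\theta^{k,L} f(x)| \leq \int_0^{1/2} s^{k-1} \int \left| \theta\!\left(x + \frac{y-x}{1-s}\right)\right| |f(y)| \diff y \frac{\diff s}{(1-s)^n},
\]
I would integrate in $x$ over $\Omega$, use Tonelli's theorem to bring the $x$--integral inside, and estimate the resulting $x$--integral crudely by $\|\theta\|_{L^\infty(\R^n)} |\Omega|$. This gives
\[
  \| P_\theta^{k,L} f \|_{L^1(\Omega)} \leq \|\theta\|_{L^\infty(\R^n)} |\Omega|\, \| f \|_{L^1(\Omega)} \int_0^{1/2} \frac{s^{k-1}}{(1-s)^n}\diff s.
\]
On $[0,1/2]$ one has $1-s \geq 1/2$, hence $\int_0^{1/2} \frac{s^{k-1}}{(1-s)^n}\diff s \leq 2^n \int_0^{1/2} s^{k-1}\diff s = \tfrac{2^{n-k}}{k}$, yielding
\[
  \| P_\theta^{k,L} f \|_{L^1(\Omega)} \leq \frac{2^{n-k}}{k}\, |\Omega|\, \|\theta\|_{L^\infty(\R^n)}\, \| f \|_{L^1(\Omega)}.
\]

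\textbf{Step 2: interpolation.} Since $k \leq n/2 \iff n/k \geq 2$, for the chosen $q > n/k \geq 2$ Lemma~\ref{lem:LqestPthetaL} gives $\| P_\theta^{k,L} f \|_{L^q(\Omega)} \leq \tfrac{2^{n/q-k}}{k-n/q} \|\theta\|_{L^1(\R^n)} \| f \|_{L^q(\Omega)}$. Applying the Riesz--Thorin interpolation theorem \cite[Theorem 1.3.4]{MR3243734} to $P_\theta^{k,L}$ between the $L^1 \to L^1$ bound of Step 1 and this $L^q \to L^q$ bound, with interpolation parameter $\beta \in (0,1)$ determined by $\tfrac12 = \tfrac{\beta}{1} + \tfrac{1-\beta}{q}$, i.e.\ $\beta = \tfrac{q-2}{2(q-1)}$, produces exactly the asserted $L^2(\Omega)$ estimate with the constant being the $\beta$--power of the $L^1$ constant times the $(1-\beta)$--power of the $L^q$ constant. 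This closes the proof.

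\begin{proof}
We first bound $P_\theta^{k,L}$ in $L^1(\Omega)$. From
\[
  |P_\theta^{k,L} f(x)| \leq \int_0^{1/2} s^{k-1} \int \left| \theta\!\left(x + \frac{y-x}{1-s}\right)\right| |f(y)| \diff y \frac{\diff s}{(1-s)^n}
\]
and Tonelli's theorem we obtain
\[
  \| P_\theta^{k,L} f \|_{L^1(\Omega)} \leq \|\theta\|_{L^\infty(\R^n)}\, |\Omega|\, \| f \|_{L^1(\Omega)} \int_0^{1/2} \frac{s^{k-1}}{(1-s)^n}\diff s \leq \frac{2^{n-k}}{k}\, |\Omega|\, \|\theta\|_{L^\infty(\R^n)}\, \| f \|_{L^1(\Omega)},
\]
where we used that $1-s \geq 1/2$ on $[0,1/2]$, so $\int_0^{1/2} \frac{s^{k-1}}{(1-s)^n}\diff s \leq 2^n \int_0^{1/2} s^{k-1}\diff s = 2^{n-k}/k$.

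Since $k \leq n/2$ is equivalent to $n/k \geq 2$, the exponent $q > n/k$ satisfies $q \geq 2$. We may therefore apply the Riesz--Thorin interpolation theorem \cite[Theorem 1.3.4]{MR3243734} between the $L^1(\Omega)$ estimate just obtained and the $L^q(\Omega)$ estimate of Lemma~\ref{lem:LqestPthetaL} to conclude that
\[
  \| P_\theta^{k,L} f \|_{L^2(\Omega)} \leq \left(\frac{2^{n-k}}k |\Omega| \| \theta \|_{L^\infty(\R^n)}\right)^\beta \left(\frac{2^{n/q-k}}{k-n/q} \| \theta \|_{L^1(\R^n)}\right)^{1-\beta} \| f \|_{L^2(\Omega)},
\]
where $\beta$ is determined by
\[
  \frac12 = \frac\beta1 + \frac{1-\beta}q \quad \implies \quad \beta = \frac{q-2}{2(q-1)}.
\]
\end{proof}
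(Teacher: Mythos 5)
Your proof is correct and takes essentially the same approach as the paper's: you establish the identical $L^1(\Omega)$ endpoint bound for $P_\theta^{k,L}$ (bounding $\theta$ by its sup, producing the $|\Omega|$ factor, and computing $\int_0^{1/2} s^{k-1}(1-s)^{-n}\diff s \leq 2^{n-k}/k$), then apply Riesz--Thorin between that and the $L^q$ bound of Lemma~\ref{lem:LqestPthetaL} for $q > n/k \geq 2$, with the same determination of $\beta$.
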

\begin{proof}
First, notice that,
\begin{align*}
  |P_\theta^{k,L}f(x)| &\leq \int_0^{1/2} \frac{s^{k-1}}{(1-s)^n} \int \theta \left(x + \frac{y-x}{1-s} \right) f(y) \diff y \diff s \\
    &\leq \| \theta \|_{L^\infty(\R^n)} \| f \|_{L^1(\Omega)} \int_0^{1/2} \frac{s^{k-1}}{(1-s)^n} \diff s,
\end{align*}
so that
\[
  \| P_\theta^{k,L}f \|_{L^1(\Omega)} \leq \frac{2^{n-k}}k |\Omega| \| \theta \|_{L^\infty(\R^n)} \| f \|_{L^1(\Omega)}.
\]

Let now $q > n/k \geq2$ and apply the Riesz--Thorin interpolation theorem to obtain
\[
  \| P_\theta^{k,L} f \|_{L^2(\Omega)} \leq \left(\frac{2^{n-k}}k |\Omega| \| \theta \|_{L^\infty(\R^n)}\right)^\beta \left(\frac{2^{n/q-k}}{k-n/q} \| \theta \|_{L^1(\R^n)}\right)^{1-\beta} \| f \|_{L^2(\Omega)},
\]
where
\[
  \frac12 = \frac\beta1 + \frac{1-\beta}q \quad \implies \quad \beta = \frac{q-2}{2(q-1)}.
\]
\end{proof}

We gather all estimates in one result.

\begin{theorem}[bound on $P_\theta^k$]
\label{thm:boundPtheta}
Let $k \in \polN$. If $k > n/2$, then we have that
\[
  \| P_\theta^k f \|_{L^2(\Omega)} \leq  \frac1{k-n/2} \| \theta \|_{L^1(\R^n)} \| f \|_{L^2(\Omega)}.
\]
If, on the other hand, $k \leq n/2$, then for any $q > n/k \geq 2$ we have that
\begin{multline*}
  \| P_\theta^kf \|_{L^2(\Omega) } \leq \left[\frac{1 -2^{\frac{n}2-k}}{k-n/2} \| \theta \|_{L^1(\R^n)} \right. \\ + \left. \left(\frac{2^{n-k}}k |\Omega| \| \theta \|_{L^\infty(\R^n)}\right)^\beta \left(\frac{2^{\frac{n}q-k}}{k-n/q} \| \theta \|_{L^1(\R^n)}\right)^{1-\beta} \right] \| f \|_{L^2(\Omega) }.
\end{multline*} 
with $\beta = \frac{q-2}{2(q-1)}$.
\end{theorem}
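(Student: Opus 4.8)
The plan is to follow verbatim the architecture of the proof of Theorem~\ref{thm:boundPd}. We split $P_\theta^k f = P_\theta^{k,L} f + P_\theta^{k,U} f$ according to whether the similarity parameter $s$ ranges over $(0,1/2)$ or over $(1/2,1)$, apply the triangle inequality in $L^2(\Omega)$, and substitute the bounds already established for the two summands.

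For the upper piece we invoke Lemma~\ref{lem:boundPthetaU}, which gives
\[
  \| P_\theta^{k,U} f \|_{L^2(\Omega)} \leq \frac{1 -2^{n/2-k}}{k-n/2} \| \theta \|_{L^1(\R^n)} \| f \|_{L^2(\Omega)}
\]
for every $k \neq n/2$; the borderline case $k = n/2$ is addressed below. For the lower piece $P_\theta^{k,L} f$ we distinguish two regimes. If $k > n/2$, then the corollary on the estimate for large $k$ applies — it uses Lemma~\ref{lem:LqestPthetaL} with the exponent $q = 2$, which is admissible precisely because $q = 2 > n/k$ is equivalent to $k > n/2$ — and contributes $\frac{2^{n/2-k}}{k-n/2}\,\| \theta \|_{L^1}\,\| f \|_{L^2(\Omega)}$. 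Adding this to the upper estimate, the numerators $1 - 2^{n/2-k}$ and $2^{n/2-k}$ add up to $1$, and we obtain exactly the first claimed bound, with constant $\frac{1}{k-n/2}$. If $k \leq n/2$, then $n/k \geq 2$, and the proposition on the estimate for small $k$ — obtained by Riesz--Thorin interpolation between the elementary $L^1(\Omega)$ bound and the $L^q(\Omega)$ bound of Lemma~\ref{lem:LqestPthetaL}, valid for any admissible $q > n/k$ — furnishes exactly the second summand of the multiline estimate, with $\beta = (q-2)/(2(q-1))$. Combining it with the upper estimate via the triangle inequality closes this case.

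I do not expect a genuine difficulty here: the statement is a clean assembly of facts proved earlier in this subsection, and it is in fact somewhat simpler than Theorem~\ref{thm:boundPd}, since the upper piece $P_\theta^{k,U}$ is controlled directly through Lemma~\ref{lem:boundPthetaU}, without any appeal to the Fourier-side estimates of Lemma~\ref{lem:FurtransPdU} and Lemma~\ref{lem:Lemma2.3Duran}. The only points that require care are the compatibility $k \leq n/2 \iff n/k \geq 2$, which guarantees the existence of an admissible $q$ with $q \geq 2$ in the small-$k$ regime (exactly the observation already used in the proof of the small-$k$ proposition), and the borderline value $k = n/2$ — possible when $n$ is even — at which the prefactor $\frac{1 - 2^{n/2-k}}{k-n/2}$ must be interpreted as its limit $\ln 2$, equivalently as the value $\int_{1/2}^1 s^{-1}\,\diff s$ of the underlying $s$--integral in Lemma~\ref{lem:boundPthetaU}.
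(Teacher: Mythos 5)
Your proposal is correct and follows essentially the same route as the paper: split $P_\theta^k = P_\theta^{k,L} + P_\theta^{k,U}$, apply the triangle inequality in $L^2(\Omega)$, feed in Lemma~\ref{lem:boundPthetaU} for the upper piece and the large-$k$ corollary or small-$k$ proposition for the lower piece, and note that in the large-$k$ case the two prefactors $\frac{1-2^{n/2-k}}{k-n/2}$ and $\frac{2^{n/2-k}}{k-n/2}$ sum to $\frac{1}{k-n/2}$. Your remark about interpreting the prefactor as its limit $\ln 2$ at the borderline $k=n/2$ (which falls in the second case when $n$ is even) is a small but genuine point of care that the paper's one-line proof leaves implicit.
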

\begin{proof}
One only needs to gather all the estimates that we have obtained so far. The only point worth noting is that, in the case $k>n/2$, the constant in the estimate turns out to be
\[
  \frac{1 -2^{n/2-k}}{k-n/2} \| \theta \|_{L^1(\R^n)} + \frac{2^{n/2-k}}{k-n/2} \| \theta \|_{L^1(\R^n)}.
\]
\end{proof}

All these preparatory estimates allow us to obtain a bound on the operators $P^\ell_i$, for $i=1,2$, that comprise the components Poincar\'e--type operator $\Poinc$.

\begin{corollary}[bound on $P_1^\ell$]
\label{cor1}
If  $\ell \in \mathbb{N}$ and $ \frac{n}{2} < \ell \le n$ we have
\begin{equation}\label{cor11}
  \| P_1^\ell f \|_{L^2(\Omega)} \le  C(n, \ell) \|f\|_{L^2(\Omega)}. 
 \end{equation}
On the other hand, if  $\ell \in \mathbb{N}$ and $  1 \le \ell \le \frac{n}{2}$ we have
\begin{equation}\label{cor12}
   \| P_1^\ell f \|_{L^2(\Omega)} \le  C(n,\ell) 
    \left[ 1+ \left( \frac{|\Omega|}{|B|} \right)^{\frac{n-2\ell}{2(n-\ell)}} \left(\log\frac{|\Omega|}{|B|}\right)^\frac{n}{2(n-\ell)} \right].
\end{equation}
\end{corollary}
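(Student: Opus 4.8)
The plan is to recognize that $P_1^\ell$ is nothing but the operator $P_\theta^\ell$ already estimated: since $\phi_1 = \theta$, the definition \eqref{eq:defPi} of $P_1^\ell$ coincides with that of $P_\theta^\ell$ (the limit in the latter being only a device to make sense of the singular integral), so Theorem~\ref{thm:boundPtheta} applies with $k = \ell$. Everything then reduces to rewriting the constants of that theorem in terms of $|\Omega|/|B|$ by means of \eqref{eq:PropertiesTheta}, and, in the range $\ell \le n/2$, to a judicious choice of the free exponent $q$. When $n/2 < \ell \le n$ there is nothing to optimize: the first alternative of Theorem~\ref{thm:boundPtheta} gives $\|P_1^\ell f\|_{L^2(\Omega)} \le (\ell - n/2)^{-1}\|\theta\|_{L^1(\R^n)}\|f\|_{L^2(\Omega)}$, and \eqref{eq:PropertiesTheta} with $\alpha = 0$ yields $\|\theta\|_{L^1(\R^n)} \le C(n)$, which is \eqref{cor11}.

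Assume now $1 \le \ell \le n/2$, so that the second alternative of Theorem~\ref{thm:boundPtheta} is available for every $q > n/\ell \ge 2$. Using \eqref{eq:PropertiesTheta} once more, $\|\theta\|_{L^1(\R^n)} \le C(n)$ and $\|\theta\|_{L^\infty(\R^n)} \le C(n)\diam(B)^{-n} \le C(n)|B|^{-1}$, since $|B|$ equals $\diam(B)^n$ up to a dimensional factor; the dyadic weights $2^{n-\ell}$ and $2^{n/q-\ell}$ are bounded by $C(n)$ uniformly in $q$. Absorbing all $n,\ell$-dependent constants, the estimate of Theorem~\ref{thm:boundPtheta} collapses to
\[
  \|P_1^\ell f\|_{L^2(\Omega)} \le C(n,\ell)\left[\,1 + \left(\frac{|\Omega|}{|B|}\right)^{\beta}\left(\frac{1}{\ell - n/q}\right)^{1-\beta}\,\right]\|f\|_{L^2(\Omega)}, \qquad \beta = \frac{q-2}{2(q-1)}.
\]
The decisive observation is that $\beta(n/\ell) = \frac{n-2\ell}{2(n-\ell)} =: \beta_0$, which is exactly the exponent of $|\Omega|/|B|$ claimed in \eqref{cor12}, with $1 - \beta_0 = \frac{n}{2(n-\ell)}$; unfortunately $q = n/\ell$ is not admissible, and at that value $\ell - n/q$ vanishes.

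The remedy I would adopt is to take $q = n/\ell + \delta$ with $\delta = (\log(|\Omega|/|B|))^{-1}$, after first disposing of the case in which $|\Omega|/|B|$ lies below a fixed dimensional threshold by choosing any admissible fixed $q$, which makes the bracket $\le C(n,\ell)$. For this $q$ one has $\ell - n/q = \ell^2\delta/(n+\ell\delta)$, hence $(\ell - n/q)^{-1} \le C(n,\ell)\,\delta^{-1} = C(n,\ell)\log(|\Omega|/|B|)$; since $\beta'(q) = \tfrac12(q-1)^{-2} \le \tfrac12$ on $q \ge 2$, the perturbation satisfies $0 \le \beta - \beta_0 \le \tfrac\delta2$, so $(|\Omega|/|B|)^{\beta - \beta_0} \le e^{(\delta/2)\log(|\Omega|/|B|)} = e^{1/2}$, while $\beta \ge \beta_0$ together with $\log(|\Omega|/|B|) \ge 1$ gives $(\log(|\Omega|/|B|))^{1-\beta} \le (\log(|\Omega|/|B|))^{1-\beta_0}$. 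Multiplying these together yields $\left(\frac{|\Omega|}{|B|}\right)^{\beta}(\ell - n/q)^{-(1-\beta)} \le C(n,\ell)\left(\frac{|\Omega|}{|B|}\right)^{\beta_0}\left(\log\frac{|\Omega|}{|B|}\right)^{1-\beta_0}$, which, combined with the constant term, is exactly \eqref{cor12}.

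I expect the optimization in $q$ to be the only genuine obstacle. One must spot that the target exponents are reached only in the forbidden limit $q \downarrow n/\ell$, control the rate at which $(\ell - n/q)^{-1}$ blows up there — this is what produces the logarithmic factor, and forces its power $\frac{n}{2(n-\ell)}$ to coincide with $1 - \beta_0$ — and check that choosing $\delta$ to be the reciprocal of $\log(|\Omega|/|B|)$ keeps the perturbation $(\beta - \beta_0)\log(|\Omega|/|B|)$ bounded. A minor bookkeeping point arises when $n$ is even and $\ell = n/2$, where the first summand of the second alternative of Theorem~\ref{thm:boundPtheta} is formally $0/0$; there I would instead use the bound $\log 2\,\|\theta\|_{L^1(\R^n)} = C(n)$ for $P_\theta^{n/2,U}$, which the proof of Lemma~\ref{lem:boundPthetaU} actually delivers, after which the argument proceeds verbatim with $\beta_0 = 0$ and $1 - \beta_0 = 1$.
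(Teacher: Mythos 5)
Your argument is correct and follows essentially the same path as the paper: apply Theorem~\ref{thm:boundPtheta} with $k=\ell$, rewrite the constants in terms of $|\Omega|/|B|$ via \eqref{eq:PropertiesTheta}, and perturb the exponent $q$ away from the forbidden endpoint $n/\ell$ by an amount of order $1/\log(|\Omega|/|B|)$ after dispensing with the bounded-ratio case. The only differences are cosmetic: the paper picks $\epsilon = \mathcal{C}A^2/(1-\mathcal{C}A)$ so that $\beta-\beta_0 = 1/\log(|\Omega|/|B|)$ exactly, whereas you take $\delta = 1/\log(|\Omega|/|B|)$ directly and control the drift via the derivative bound $\beta' \le 1/2$, and you explicitly resolve the $0/0$ in the first summand of Theorem~\ref{thm:boundPtheta} when $\ell = n/2$, which the paper silently absorbs into $C(n,\ell)$.
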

\begin{proof}
In the case  $\frac{n}{2} < \ell \le n$ we use the first case of Theorem~\ref{thm:boundPtheta}, to obtain
  \[
    \| P_1^\ell f \|_{L^2(\Omega)} \leq \frac1{\ell-n/2} \| \theta \|_{L^1(\R^n)} \| f \|_{L^2} = C(n,\ell) \| f \|_{L^2(\Omega)}.
  \]
  This shows \eqref{cor11}. In the case, $  1 \le \ell \le \frac{n}{2}$ we use  the second case of Theorem~\ref{thm:boundPtheta}  to obtain
 \begin{align*}
   \| P_1^\ell f \|_{L^2(\Omega)} &\leq   \left[ C(n,\ell) + \left(\frac{2^{n-\ell}}\ell |\Omega| \| \theta \|_{L^\infty(\R^n)}\right)^\beta \left(\frac{2^{n/q-\ell}}{\ell-n/q} \| \theta \|_{L^1(\R^n)}\right)^{1-\beta}\right] \| f\|_{L^2(\Omega)},
  \end{align*}
 with $q>n/\ell$ and $\beta = \frac{q-2}{2(q-1)}$.  Using that $\| \theta \|_{L^\infty(\R^n)} \leq C(n)/|B|$ we get 
\begin{align*}
   \| P_1^\ell f \|_{L^2(\Omega)} &\leq   D \| f\|_{L^2(\Omega)}, 
 \end{align*}
where
\begin{equation*}
 D=C(n, \ell) + \left( \frac{2^{n-\ell} C(n) }\ell \right)^\beta \left( \frac{2^{n/q-\ell}}{\ell-n/q} \right)^{1-\beta} \left( \frac{|\Omega|}{|B|} \right)^\beta.
\end{equation*}
  Let us write now
  \[
    \beta = \frac{q-2}{2(q-1)} = \frac12 \left( 1 - \frac1{q-1} \right) = \frac12 \left( 1 - \frac\ell{n-\ell} \right)+ \frac12 \left( \frac\ell{n-\ell} - \frac1{q-1} \right).
  \]
  We choose $q = n/\ell + \epsilon$  with 
  \begin{equation*}
  \epsilon=  \frac{\mathcal{C} A^2}{1-\mathcal{C} A},  \qquad \mathcal{C}= \frac2{\log(|\Omega|/|B|)}   \qquad  A={n / \ell-1}. 
  \end{equation*}
 We assume that $|\Omega|/|B|$ is sufficiently large so that $1-\mathcal{C} A >1/2$ which will imply that  $0< \epsilon  \le  A$. We then see that
  \[
    \frac12 \left( \frac\ell{n-\ell} - \frac1{q-1} \right) = \frac1{\log(|\Omega|/|B|)}.
  \]
  Consequently,
  \[
 D \leq C(n, \ell) + e\left( \frac{2^{n-\ell} C(n) }\ell \right)^\beta \left( \frac{2^{n/q-\ell}}{\ell-n/q} \right)^{1-\beta} \left( \frac{|\Omega|}{|B|} \right)^{\frac{n-2\ell}{2(n-\ell)}}.
  \]
  
With these choices, we get that
  \begin{equation*}
  \ell -n/q = \frac{\ell^2 \epsilon}{n+ \ell \epsilon} \ge  \frac{\ell^2 \epsilon}{n+ \ell A} \ge   \mathcal{C} \frac{\ell^2 A^2 }{n+ \ell A}.
  \end{equation*}
  Moreover, 
  \begin{equation*}
  1-\beta=q/2(q-1)= \frac{n+ \epsilon \ell}{2 (n-\ell)+ 2 \epsilon \ell} \le   \frac{n}{2 (n-\ell)}.
  \end{equation*}

  Hence,
  \[
  D \leq C(n,\ell) 
    \left[ 1+ \left( \frac{|\Omega|}{|B|} \right)^{\frac{n-2\ell}{2(n-\ell)}} \left(\log\frac{|\Omega|}{|B|}\right)^\frac{n}{2(n-\ell)} \right].
  \]
  
 This proves \eqref{cor12}.
\end{proof}

\begin{corollary}[bound on $\partial_j P_i^\ell$]
\label{cor2}
If  $\ell \in \mathbb{N}$ and $ \frac{n-2}{2} < \ell \le n$ we have
\begin{equation}\label{cor21}
 R \| \partial_j P_1^\ell f \|_{L^2(\Omega)} + \| \partial_j P_2^\ell f \|_{L^2(\Omega)} \le  C(n, \ell) \frac{R}{\rho} \|f\|_{L^2(\Omega)}. 
 \end{equation}
On the other hand, if  $\ell \in \mathbb{N}$ and $  1 \le \ell  \le \frac{n-2}{2}$ we have
\begin{multline}
\label{cor22}
 R \| \partial_j P_1^\ell f \|_{L^2(\Omega)} + \| \partial_j P_2^\ell f \|_{L^2(\Omega)} \le  \\ C(n,\ell) \frac{R}{\rho}
    \left[ 1+ \left( \frac{|\Omega|}{|B|} \right)^{\frac{n-2(\ell+1)}{2(n-\ell-1)}}  \left( \log\frac{|\Omega|}{|B|} \right)^{\frac{n}{2(n-\ell-1)}} \right] \| f \|_{L^2(\Omega)}.
\end{multline}
\end{corollary}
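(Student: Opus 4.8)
The plan is to reduce both quantities to Theorem~\ref{thm:boundPd}. Differentiating under the integral sign in \eqref{eq:defPi} and comparing with the definition of $P_\partial^k$ shows that $\partial_j P_i^\ell f = P_\partial^\ell f$, with the generic function $\phi$ there taken to be $\phi_i$; hence it suffices to invoke Theorem~\ref{thm:boundPd} with $k=\ell$ twice, once with $\phi=\phi_1=\theta$ (for $\partial_j P_1^\ell f$) and once with $\phi=\phi_2=\theta z_m$ (for $\partial_j P_2^\ell f$). We may, and do, translate so that the ball $B$ is centered at the origin, so that $|z_m|\le\rho$ for every $z\in\supp\theta$; this is the only place where the inhomogeneous weight $z_m$ entering $\phi_2$ must be controlled. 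Recall also that $2\rho=\diam(B)\le\diam(\Omega)=R$, so in particular $1\le R/\rho$.

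The core of the argument is a bookkeeping of the three quantities $\mathsf{C}(\phi_i,\rho)$, $\|\partial_j\phi_i\|_{L^1(\R^n)}$ and $\|\partial_j\phi_i\|_{L^\infty(\R^n)}$ appearing in Theorem~\ref{thm:boundPd}, carried out by means of \eqref{eq:PropertiesTheta} (together with $\diam(B)=2\rho$ and $\rho^{-n}\le C(n)|B|^{-1}$). For $\phi_1=\theta$ this gives $\mathsf{C}(\theta,\rho)\le C(n)\rho^{-1}$, $\|\partial_j\theta\|_{L^1(\R^n)}\le C(n)\rho^{-1}$ and $\|\partial_j\theta\|_{L^\infty(\R^n)}\le C(n)\rho^{-(n+1)}$. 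For $\phi_2=\theta z_m$, using $\partial_j\phi_2=(\partial_j\theta)z_m+\delta_{jm}\theta$ and $\partial_j^2\phi_2=(\partial_j^2\theta)z_m+2\delta_{jm}\partial_j\theta$ together with $|z_m|\le\rho$, one gets $\mathsf{C}(\theta z_m,\rho)\le C(n)$, $\|\partial_j(\theta z_m)\|_{L^1(\R^n)}\le C(n)$ and $\|\partial_j(\theta z_m)\|_{L^\infty(\R^n)}\le C(n)\rho^{-n}$; in short, multiplication by the weight $z_m$ absorbs exactly one power of $\rho^{-1}$. This is precisely why $\partial_j P_1^\ell$ retains the factor $\rho^{-1}$ whereas $\partial_j P_2^\ell$ does not, and hence why the asserted bound has the shape $C(n,\ell)\,(R/\rho)[\cdots]$.

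For $\frac{n-2}{2}<\ell\le n$ we feed these estimates into the first case of Theorem~\ref{thm:boundPd}, obtaining $\|\partial_j P_1^\ell f\|_{L^2(\Omega)}\le C(n,\ell)\rho^{-1}\|f\|_{L^2(\Omega)}$ and $\|\partial_j P_2^\ell f\|_{L^2(\Omega)}\le C(n,\ell)\|f\|_{L^2(\Omega)}$; multiplying the first by $R$, adding, and using $1\le R/\rho$ gives \eqref{cor21}. For $1\le\ell\le\frac{n-2}{2}$ we use the second case of Theorem~\ref{thm:boundPd} with any $p>n/(\ell+1)$ and $\gamma=\frac{p-2}{2(p-1)}$; substituting the estimates above and using $|\Omega|\rho^{-n}\le C(n)|\Omega|/|B|$ yields
\[
  \|\partial_j P_1^\ell f\|_{L^2(\Omega)}\le \frac{C(n,\ell,p)}{\rho}\Bigl[1+\Bigl(\tfrac{|\Omega|}{|B|}\Bigr)^{\gamma}\Bigr]\|f\|_{L^2(\Omega)},\qquad \|\partial_j P_2^\ell f\|_{L^2(\Omega)}\le C(n,\ell,p)\Bigl[1+\Bigl(\tfrac{|\Omega|}{|B|}\Bigr)^{\gamma}\Bigr]\|f\|_{L^2(\Omega)}.
\]
It remains to optimize in $p$. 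This is verbatim the computation performed in the proof of Corollary~\ref{cor1}, with $\ell$ replaced throughout by $\ell+1$ (so that the constraint $q>n/\ell$ there becomes $p>n/(\ell+1)$ here), and it produces the factor $1+(|\Omega|/|B|)^{\frac{n-2(\ell+1)}{2(n-\ell-1)}}(\log(|\Omega|/|B|))^{\frac{n}{2(n-\ell-1)}}$ as soon as $|\Omega|/|B|$ is large enough, the remaining bounded range of $|\Omega|/|B|$ being absorbed into $C(n,\ell)$. Multiplying the first estimate by $R$, adding the second, and using $\rho\le R$ gives \eqref{cor22}.

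The proof is thus little more than a careful accounting of the powers of $\rho$; the only point demanding some care is the treatment of the weight $z_m$ in $\phi_2$, which (once $B$ is centered at the origin) is what breaks the symmetry between the two terms and explains the extra $R$ in front of $\partial_j P_1^\ell$. The optimization in $p$ is the only slightly delicate computation, and it has already been carried out in Corollary~\ref{cor1}.
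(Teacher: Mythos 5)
Your proof is correct and follows essentially the same route as the paper: identify $\partial_j P_i^\ell$ with $P_\partial^\ell$ for $\phi=\phi_i$, plug into Theorem~\ref{thm:boundPd}, track powers of $\rho$ via \eqref{eq:PropertiesTheta}, and in the low-$\ell$ case optimize in $p$ exactly as in Corollary~\ref{cor1} with $\ell\mapsto\ell+1$. Your bookkeeping of $\mathsf{C}(\phi_i,\rho)$, $\|\partial_j\phi_i\|_{L^1}$, $\|\partial_j\phi_i\|_{L^\infty}$ is actually a bit more explicit than the paper's, and the remark about centering $B$ so that $|z_m|\le\rho$ makes precise a point the paper leaves implicit.
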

\begin{proof}
If  $ \frac{n-2}{2} < \ell \le n$ we use the first case of Theorem~\ref{thm:boundPd} to get
  \begin{align*}
    \| \partial_j P_1^\ell f \|_{L^2(\Omega)} &\leq \left[ 2^{(n-2)/2} \mathsf{C}(\phi_1,\rho) + \frac{2^{n/2-\ell}}{\ell-n/2+1} \| \partial_j \phi_1 \|_{L^1(\R^n)} \right] \| f \|_{L^2(\Omega)} \\
      &\leq C(n, \ell) \left[ \mathsf{C}(\phi_1,\rho) + \frac1\rho \right] \| f \|_{L^2(\Omega)},
  \end{align*}
  where we used that $\phi_1 = \theta$ and \eqref{eq:PropertiesTheta}. Using \eqref{eq:PropertiesTheta} again yields
  \begin{equation}
  \label{eq:boundCphi1}
    \mathsf{C}(\phi_1,\rho) = \mathsf{C}(\theta,\rho) = \rho^{-1} \| \theta \|_{L^1(\R^n)} + \rho \| D^2 \theta \|_{L^1(\R^n)} \leq \frac{C(n)}\rho,
  \end{equation}
  and as a consequence we finally get
  \[
    R \| \partial_j P_1^\ell f \|_{L^2(\Omega)} \leq \frac{C(n,\ell)R}\rho \| f \|_{L^2(\Omega)}.
  \]

  Similarly,
  \begin{align*}
    \| \partial_j P_2^\ell f \|_{L^2(\Omega)} &\leq \left[ 2^{(n-2)/2} \mathsf{C}(\phi_2,\rho) + \frac{2^{n/2-\ell}}{\ell-n/2+1} \| \partial_j \phi_2 \|_{L^1(\R^n)} \right] \| f \|_{L^2(\Omega)} \\
      &\leq C(n, \ell) \left[ \mathsf{C}(\phi_2,\rho) + \| \partial_j \phi_2 \|_{L^1(\R^n)} \right] \| f \|_{L^2(\Omega)}.
  \end{align*}
 Using that $\phi_2(z) = z_k \theta(z)$, we can easily show that  $\mathsf{C}(\phi_2,\rho) + \| \partial_j \phi_2 \|_{L^1(\R^n)} \le C(n)$. Consequently, 
 \begin{align*}
    \| \partial_j P_2^\ell f \|_{L^2(\Omega)} \le   C(n, \ell) \| f \|_{L^2(\Omega)}.
 \end{align*}
 This proves \eqref{cor21}.
 
 If  $  1 \le \ell  \le \frac{n-2}{2}$  then by Theorem \ref{thm:boundPd} we have
  \[
    \| \partial_j P_1^\ell f \|_{L^2(\Omega)} \leq D \| f \|_{L^2(\Omega)},
  \]
  with
  \[
    D=  2^{(n-2)/2} \mathsf{C}(\phi_1,\rho) + \left( \frac{2^{n-\ell}}{\ell+1} \| \partial_j \phi_1 \|_{L^\infty(\R^n)} |\Omega| \right)^\gamma \left( \frac{2^{n/p-\ell}}{\ell-n/p+1} \| \partial_j \phi_1 \|_{L^1(\R^n)} \right)^{1-\gamma}  .
  \]
Here  $p>n/(\ell+1)$, and $\gamma = \tfrac{p-2}{2(p-1)}$. Using \eqref{eq:boundCphi1} and \eqref{eq:PropertiesTheta} this reduces to
  \[
    D \leq \frac{C(n,\ell)}\rho \left[
      1 + \left( \frac{2^{n-\ell} }{\ell+1} \right)^\gamma \left( \frac{2^{n/p-\ell}}{\ell-n/p+1} \right)^{1-\gamma}
      \left(\frac{|\Omega|}{|B|}\right)^\gamma \right].
  \]
  We write
  \[
    \gamma = \frac12 \left( 1 - \frac1{p-1} \right) = \frac12\left( 1- \frac{\ell+1}{n-(\ell+1)} \right) + \frac12\left( \frac{\ell+1}{n-(\ell+1)} - \frac1{p-1} \right).
  \]
We choose $p = n/(\ell+1) + \epsilon$  with 
\begin{equation*}
\epsilon=  \frac{\mathcal{C} A^2}{1-\mathcal{C} A},  \qquad \mathcal{C}= \frac2{\log(|\Omega|/|B|)},   \qquad A=\frac{1}{n / (\ell+1)-1}. 
\end{equation*}
 We assume that $|\Omega|/|B|$ is sufficiently large so that $1-\mathcal{C} A >1/2$ which will imply that  $0< \epsilon  \le  A$. We then see that
  \[
    \frac12 \left( \frac{\ell+1}{n-(\ell+1)} - \frac1{p-1} \right) = \frac1{\log(|\Omega|/|B|)}.
  \]
  Consequently, 
  \[
    D \leq \frac{C(n,\ell)}\rho \left[
      1 + e \left( \frac{2^{n-\ell} }{\ell+1} \right)^\gamma \left( \frac{2^{n/p-\ell}}{\ell-n/p+1} \right)^{1-\gamma}
      \left(\frac{|\Omega|}{|B|}\right)^{ {\frac{n-2(\ell+1)}{2(n-\ell-1)}}} \right].
  \]
  
 We see that these choices yield
  \begin{equation*}
  \ell+1 -n/p = \frac{(\ell+1)^2 \epsilon}{n+ (\ell+1) \epsilon} \ge  \frac{(\ell+1)^2 \epsilon}{n+ (\ell+1) A} \ge   \mathcal{C} \frac{(\ell+1)^2 A^2 }{n+ (\ell+1) A}.
  \end{equation*}
 Moreover, 
  \begin{equation*}
  1-\gamma=(p-1)/2p= \frac{n+ \epsilon (\ell+1)}{2 (n-(\ell+1))+ 2 \epsilon (\ell+1)} \le   \frac{n}{2 (n-(\ell+1))}.
  \end{equation*}
  As a consequence,
  \begin{equation*}
  \left( \frac{2^{n/p-\ell}}{\ell-n/p+1} \right)^{1-\gamma} \le  C(n,\ell)  \left(\log\frac{|\Omega|}{|B|} \right)^{\frac{n}{2(n-\ell-1)}}.
  \end{equation*}
  Thus,
  \begin{equation*}
  D\le   \frac{C(n,\ell)}{\rho}
  \left[ 1+ \left( \frac{|\Omega|}{|B|} \right)^{\frac{n-2(\ell+1)}{2(n-\ell-1)}}  \left( \log\frac{|\Omega|}{|B|} \right)^{\frac{n}{2(n-\ell-1)}} \right].
    \end{equation*} 
    which shows that 
   \begin{equation*}
   R \| \partial_j P_1^\ell f \|_{L^2(\Omega)}  \le   C(n, \ell) \frac{R}{\rho}
  \left[ 1+ \left( \frac{|\Omega|}{|B|} \right)^{\frac{n-2(\ell+1)}{2(n-\ell-1)}}  \left( \log\frac{|\Omega|}{|B|} \right)^{\frac{n}{2(n-\ell-1)}} \right].
    \end{equation*} 
     Similarly, we can show that 
    \begin{equation*}
    \| \partial_j P_2^\ell f \|_{L^2(\Omega)}  \le   C(n, \ell) 
  \left[ 1+ \left( \frac{|\Omega|}{|B|} \right)^{\frac{n-2(\ell+1)}{2(n-\ell-1)}}  \left( \log\frac{|\Omega|}{|B|} \right)^{\frac{n}{2(n-\ell-1)}} \right].
    \end{equation*} 
       This proves \eqref{cor22}.
\end{proof}

\subsubsection{The final estimate}
\label{sub:finalestP}

All the preliminary estimates of the previous section make obtaining a first order estimate for $\Poinc$ almost immediate.

%
%
%
%
%

\begin{theorem}[bound on $P_\ell$]
\label{thm:boudnP}
Let $\Omega$ be a bounded domain that is star shaped with respect to a ball $B$. Set $R=\diam(\Omega)$, and $\rho = \diam(B)$. Then, for $\ell \in \{1, \ldots, n \}$, the operator $P_\ell$, defined in \eqref{eq:PoincareReduced}, satisfies
\[
  | P_\ell f |_{H^1(\Omega)} \leq C(n, \ell)  \frac{R}\rho \kappa \| f \|_{L^2(\Omega)},
\]
where $C(n,\ell)$ is a constant that only depends on $n$ and $\ell$ and $\kappa = \kappa( \Omega, B, R, \rho)$ is such that,
\begin{enumerate}[1.]
  \item If $2\ell >n$, then
  \[
    \kappa = 1.
  \]
  
  \item If $2\ell =n-1$ or $2\ell = n$, then
  \[
    \kappa =\left( \frac{|\Omega|}{|B|} \right)^{\frac{n-2\ell}{2(n-\ell)}} \left(\log\frac{|\Omega|}{|B|}\right)^\frac{n}{2(n-\ell)}.
  \]
  
  \item If $2\ell \leq n-2$, then
  \[
    \kappa =
      \left( \frac{|\Omega|}{|B|} \right)^{\frac{n-2\ell}{2(n-\ell)}}  \left( \log\frac{|\Omega|}{|B|} \right)^{\frac{n}{2(n-\ell-1)}} .
  \]
\end{enumerate}
\end{theorem}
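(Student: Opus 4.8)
The plan is to bound each partial derivative $\partial_j P_\ell f$, $j\in\{1,\dots,n\}$, in $L^2(\Omega)$ and then take the square root of the sum over $j$, absorbing the resulting factor $\sqrt n$ into $C(n,\ell)$. After translating coordinates — which changes neither \eqref{eq:PropertiesTheta}, nor $|\Omega|$, $|B|$, $R$, $\rho$, nor the structure of the operators $P_i^k$, nor the $H^1$ seminorm — we may assume $B$ is centered at the origin, so that $0\in B\subset\Omega$ forces $\|x_m\|_{L^\infty(\Omega)}\le\diam(\Omega)=R$. Inserting the identity \eqref{eq:derivPoincare} into the triangle inequality and using this bound on $|x_m|$ gives, for each $j$,
\[
  \|\partial_j P_\ell f\|_{L^2(\Omega)} \le \|P_1^\ell f\|_{L^2(\Omega)} + \Bigl( R\,\|\partial_j P_1^\ell f\|_{L^2(\Omega)} + \|\partial_j P_2^\ell f\|_{L^2(\Omega)} \Bigr),
\]
and I would control the first term by Corollary~\ref{cor1} and the parenthesized block by Corollary~\ref{cor2}.

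The three cases of the statement correspond precisely to the three possible pairings of the alternatives of these two corollaries — the fourth pairing (the first alternative of Corollary~\ref{cor1} with the second of Corollary~\ref{cor2}) being vacuous, since $n/2>(n-2)/2$. Throughout I use that $B\subset\Omega$ gives $R/\rho\ge1$ and $|\Omega|/|B|\ge1$, and — as is already assumed in Corollaries~\ref{cor1} and~\ref{cor2} — that $|\Omega|/|B|$ is large enough that $\log(|\Omega|/|B|)\ge1$; together these give $\kappa\ge1$ in all three cases. If $2\ell>n$ (so $\ell>n/2>(n-2)/2$) both corollaries are in their first alternative, \eqref{cor11} and \eqref{cor21}, and $\|\partial_j P_\ell f\|_{L^2(\Omega)}\le C(n,\ell)(1+R/\rho)\|f\|_{L^2(\Omega)}\le C(n,\ell)(R/\rho)\|f\|_{L^2(\Omega)}$, which is the case $\kappa=1$. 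If $2\ell=n-1$ or $2\ell=n$ (so $(n-2)/2<\ell\le n/2$), Corollary~\ref{cor1} supplies its second alternative \eqref{cor12}, whose geometric factor is exactly the $\kappa$ recorded here, while Corollary~\ref{cor2} still supplies only $C(n,\ell)R/\rho$; combining and using $R/\rho\ge1$, $\kappa\ge1$ gives the claim. If $2\ell\le n-2$ (so $\ell\le(n-2)/2$) both corollaries use their second alternative, \eqref{cor12} and \eqref{cor22}, producing the factors
\[
  \kappa_1=\Bigl(\tfrac{|\Omega|}{|B|}\Bigr)^{\frac{n-2\ell}{2(n-\ell)}}\Bigl(\log\tfrac{|\Omega|}{|B|}\Bigr)^{\frac{n}{2(n-\ell)}}, \qquad \kappa_2=\Bigl(\tfrac{|\Omega|}{|B|}\Bigr)^{\frac{n-2(\ell+1)}{2(n-\ell-1)}}\Bigl(\log\tfrac{|\Omega|}{|B|}\Bigr)^{\frac{n}{2(n-\ell-1)}},
\]
and I must show $\kappa_1\le\kappa$ and $\kappa_2\le\kappa$.

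For this, note $\kappa_1$ carries the same power of $|\Omega|/|B|$ as $\kappa$, so $\tfrac{n}{2(n-\ell)}\le\tfrac{n}{2(n-\ell-1)}$ and $\log(|\Omega|/|B|)\ge1$ give $\kappa_1\le\kappa$; and $\kappa_2$ carries the same power of $\log(|\Omega|/|B|)$ as $\kappa$, while the elementary inequality $\tfrac{n-2(\ell+1)}{2(n-\ell-1)}<\tfrac{n-2\ell}{2(n-\ell)}$ — immediate on cross-multiplying, since $(n-2\ell-2)(n-\ell)-(n-2\ell)(n-\ell-1)=-n<0$ — together with $|\Omega|/|B|\ge1$ gives $\kappa_2\le\kappa$. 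Since $R/\rho\ge1$ and $\kappa\ge1$, each summand that can occur, namely $1$, $R/\rho$, $\kappa_1$, $\kappa_2$, is bounded by $(R/\rho)\kappa$, so $\|\partial_j P_\ell f\|_{L^2(\Omega)}\le C(n,\ell)(R/\rho)\kappa\,\|f\|_{L^2(\Omega)}$ for every $j$, and summing over $j$ finishes the proof. The work is essentially bookkeeping; the points that require genuine care are tracking which alternative of Corollaries~\ref{cor1} and~\ref{cor2} is active on each $\ell$-range (the thresholds $n/2$ and $(n-2)/2$ being distinct, which is exactly why $2\ell\in\{n-1,n\}$ is a separate case) and checking the exponent comparison that lets the smaller denominator $n-\ell-1$ in the logarithmic power of $\kappa_2$ be compensated by its simultaneously smaller power of $|\Omega|/|B|$, so that nothing is lost on absorbing $\kappa_2$ into $\kappa$.
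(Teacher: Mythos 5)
Your proof is correct and follows the paper's own argument: apply \eqref{eq:derivPoincare}, the triangle inequality, and the bound $|x_m|\le R$ to reduce to Corollaries~\ref{cor1} and~\ref{cor2}, then split into cases according to which alternatives of those corollaries are active. The only genuine additions — the translation argument justifying $\|x_m\|_{L^\infty(\Omega)}\le R$ (needed since the corollaries' bounds on $\phi_2(z)=z_m\theta(z)$ implicitly assume $B$ is near the origin), and the explicit verification of $\kappa_1,\kappa_2\le\kappa$ via the exponent comparison $(n-2\ell-2)(n-\ell)-(n-2\ell)(n-\ell-1)=-n$ — are exactly the details the paper's two-line proof elides, and you supply them correctly.
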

\begin{proof}
From \eqref{eq:derivPoincare} we have that, for any $j \in \{1, \ldots, n\}$,
\[
  \| \partial_j P_\ell f\|_{L^2(\Omega)} \leq \| P_1^\ell f \|_{L^2(\Omega)} + R \| \partial_j P_1^\ell f \|_{L^2(\Omega)} + \| \partial_j P_2^\ell f \|_{L^2(\Omega)}.
\]
Then, the result follows from Corollaries \ref{cor1} and \ref{cor2}. 
\end{proof}

The most important consequence of this estimate is one of the main results in this work. Namely, an estimate on $C_{\Poinc,1}$.

\begin{corollary}[estimate on $C_{\Poinc,1}$]
\label{cor:PoincareEstimate}
Let $\Omega$ be a bounded domain that is star shaped with respect to a ball $B$. Set $R=\diam(\Omega)$, and $\rho = \diam(B)$. Then, for $\ell \in \{1, \ldots, n \}$, the operator $\Poinc$, defined in \eqref{eq:defPoincare} satisfies
\[
  | \Poinc u |_{H^1(\Omega,\Lambda^{\ell-1})} \leq C(n, \ell)  \frac{R}\rho \kappa \| u \|_{L^2(\Omega, \Lambda^\ell)},
\]
where $C(n,\ell)$ is a constant that only depends on $n$ and $\ell$ and $\kappa = \kappa( \Omega, B, R, \rho)$ is as in Theorem~\ref{thm:boudnP}.
\end{corollary}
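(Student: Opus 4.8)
The plan is to reduce the statement entirely to Theorem~\ref{thm:boudnP} by means of the componentwise description of $\Poinc$ recorded at the beginning of Section~\ref{sub:FOPoincare}; no new analysis is needed, only bookkeeping and a counting argument.

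First I would expand $u = \sum_I u_I \diff x_I$ with $u_I \in L^2(\Omega)$, extend each coefficient by zero so that $\supp u_I \subset \bar\Omega$, and invoke the identity obtained just before \eqref{eq:PoincareReduced},
\[
  \Poinc u(x) = \sum_I \sum_{m=1}^\ell (-1)^{m-1} \bigl( P_\ell u_I \bigr)(x)\, \diff x_{\hat I_m},
\]
where, for each $m$, $P_\ell$ denotes the operator in \eqref{eq:PoincareReduced} with that value of the distinguished index. Collecting the summands according to the value of the ordered $(\ell-1)$-tuple $J = \hat I_m$, each coefficient $(\Poinc u)_J$ is a finite signed sum of terms $\pm P_\ell u_I$, and the number of summands contributing to a fixed $J$ does not exceed $\ell\binom{n}{\ell}$, hence is bounded by a constant $N=N(n,\ell)$ depending only on $n$ and $\ell$.

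Next I would apply Theorem~\ref{thm:boudnP}, which gives $|P_\ell g|_{H^1(\Omega)} \le C(n,\ell)\,(R/\rho)\,\kappa\,\|g\|_{L^2(\Omega)}$ for every admissible $g$, with $C(n,\ell)$ and $\kappa$ independent of the distinguished index $m$. Using the triangle inequality in $H^1(\Omega)$ componentwise, then the elementary bound $\bigl(\sum_{i=1}^N a_i\bigr)^2 \le N \sum_{i=1}^N a_i^2$, then summing over $J$, and finally using $\|u\|_{L^2(\Omega,\Lambda^\ell)}^2 = \sum_I \|u_I\|_{L^2(\Omega)}^2$, I obtain
\[
  |\Poinc u|_{H^1(\Omega,\Lambda^{\ell-1})}^2 \le N(n,\ell)\,C(n,\ell)^2\,\frac{R^2}{\rho^2}\,\kappa^2\,\|u\|_{L^2(\Omega,\Lambda^\ell)}^2 ,
\]
and after absorbing the combinatorial factor into the constant this is exactly the asserted estimate, with the same $\kappa$ as in Theorem~\ref{thm:boudnP}.

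The only point requiring a little care — and it is the closest thing to an obstacle — is the legitimacy of the rearrangement of finite sums and of the splitting of the $y$- and $s$-integrals used to pass from \eqref{eq:defPoincare} to the decomposition above; these manipulations are justified because, by Theorem~\ref{thm:Costabel} and the Remark following it, $\Poinc$ and its constituent pieces are genuine bounded operators rather than merely formal weakly singular integrals. Once this is granted, everything else is routine.
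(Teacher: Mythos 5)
Your argument is correct and takes essentially the same route as the paper: the paper's proof is the single sentence that one applies Theorem~\ref{thm:boudnP} to each component of $\Poinc u$, and you simply spell out the componentwise bookkeeping (the expansion over ordered index tuples, the crude combinatorial count, and the $\ell^2$-summation) that the paper leaves implicit.
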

\begin{proof}
It suffices to apply the estimate of Theorem~\ref{thm:boudnP} to each one of the components of $\Poinc u$.
\end{proof}

\subsection{Second order estimates}
\label{sub:HkPoincare}

Let us now describe how our techniques can be used to estimate the continuity constant in the case that $u \in H^k(\Omega,\Lambda^\ell)$ with $k \in \polN$. The starting point is again the operator $P_\ell$ defined in \eqref{eq:PoincareReduced}. The change of variables $z = x+(y-x)/(1-s)$ reveals (compare with formula (3.9) of \cite{MR2609313})
\[
  P_\ell f(x) = \int \theta(z) (x_m - z_m) \int_0^1 s^{\ell-1} f(sx + (1-s)z) \diff s\diff z,
\]
and, therefore, if $\alpha \in \polN^n$ is a multiindex of length $k$
\begin{align*}
  \partial^\alpha P_\ell f(x) &=  \sum_{\nu \leq \alpha} {\alpha \choose \nu} \partial^\nu(x_m) \int \theta(z) \int_0^1 s^{\ell-1} \partial_x^{\alpha - \nu}[f(sx + (1-s)z)] \diff s \diff z \\
    &- \int z_m \theta(z) \int_0^1 s^{\ell-1} \partial_x^\alpha[f(sx + (1-s)z)] \diff s \diff z \\
    &=  \sum_{\nu \leq \alpha} {\alpha \choose \nu} \partial^\nu(x_m) \int \theta(z) \int_0^1 s^{\ell-1 + |\alpha-\nu|} \partial^{\alpha - \nu}f(sx + (1-s)z) \diff s \diff z \\
    &- \int z_m \theta(z) \int_0^1 s^{\ell-1+k} \partial^\alpha f(sx + (1-s)z) \diff s \diff z,
\end{align*}
where the sum is over all multiindices $\nu \in \polN^n$ such that $\nu_i \leq \alpha_i$ for all $i \in \{1, \ldots, n\}$ and
\[
  {\alpha \choose \nu } = \prod_{i=1}^n {\alpha_i \choose \nu_i}.
\]
Let us now introduce the change of variables $y = sx + (1-s)z$ in each integral to obtain
\begin{align*}
  \partial^\alpha P_\ell f(x) &= 
  \sum_{\nu \leq \alpha} {\alpha \choose \nu} \partial^\nu(x_m) \int_0^1 s^{\ell-1 + |\alpha-\nu|} \int \theta\left( x + \frac{y-x}{1-s} \right) \partial^{\alpha - \nu} f(y) \diff y \frac{\diff s}{(1-s)^n} \\
  & - \int_0^1 s^{\ell-1+k} \int \theta\left( x + \frac{y-x}{1-s} \right) \frac{x_m - y_m}{1-s} \partial^{\alpha } f(y) \diff y \frac{\diff s}{(1-s)^n} \\
  &= \sum_{\nu \leq \alpha} {\alpha \choose \nu} \partial^\nu(x_m) P_1^{\ell + |\alpha - \nu|} [\partial^{\alpha-\nu}f](x) + 
    P_2^{\ell + k}[\partial^\alpha f](x),
\end{align*}
where the operators $P_i^k$, for $i = 1,2$ and $k \in \polN$ are defined in \eqref{eq:defPi}. Thus, in much similarity to \eqref{eq:derivPoincare}, we have that
\begin{equation}
\label{eq:HighDerivsPoincare}
\begin{aligned}
  \partial_j \partial^\alpha P_\ell f(x) &= 
  \delta_{jm} P_1^{\ell + k} [\partial^{\alpha}f](x) +
  \sum_{0<\nu \leq \alpha} {\alpha \choose \nu} \partial^\nu(x_m) \partial_j P_1^{\ell + |\alpha - \nu|} [\partial^{\alpha-\nu}f](x) \\ &+ 
   \partial_j P_2^{\ell + k}[\partial^\alpha f](x).
\end{aligned}
\end{equation}

The bounds on each one of these operators were already obtained in Sections~\ref{subsub:boundPdk} and \ref{subsub:boundPthetak}. Unfortunately, since they heavily depend on the order of the operator, there is no clear way that one can explicitly estimate the middle term in this last expression. For this reason, we will content ourselves with a second order estimate in the case of sufficiently large $\ell$. The remaining cases can be treated with a detailed analysis similar to that of Section~\ref{sub:finalestP}. We skip this for brevity.

\begin{theorem}[estimate on $C_{\Poinc,2}$]
\label{thm:H2estPoincare}
Let $\Omega$ be a bounded domain that is star shaped with respect to a ball $B$. Set $R=\diam(\Omega)$, and $\rho = \diam(B)$. Then, for $\ell \in \{1, \ldots, n \}$, such that $2\ell >n$ the operator $\Poinc$, defined in \eqref{eq:defPoincare}, satisfies
\[
  | \Poinc u |_{H^2(\Omega,\Lambda^{\ell-1})} \leq C(n, \ell)  \frac1\rho \| u \|_{H^1(\Omega, \Lambda^\ell)},
\]
where $C(n,\ell)$ is a constant that only depends on $n$ and $\ell$.
\end{theorem}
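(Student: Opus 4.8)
The plan is to run the first-order scheme of Section~\ref{sub:finalestP} one derivative higher, starting from the identity \eqref{eq:HighDerivsPoincare}, and to exploit that the hypothesis $2\ell>n$ keeps every auxiliary operator that appears of ``large order'', \ie in the case of Theorems~\ref{thm:boundPd} and \ref{thm:boundPtheta} in which the bounds carry only $\rho$ and no factor $|\Omega|/|B|$. Reducing to the scalar operator $P_\ell$ as in Section~\ref{sub:FOPoincare}, each component of $\Poinc u$ is a finite sum of terms $\pm P_\ell u_I$ for a suitable $m\in\{1,\dots,n\}$, and the integrals defining $P_\ell$ evaluate $f=u_I$ (and its derivatives) only at points of $\Omega$ by star shapedness; so it suffices to bound $\|\partial_j\partial_i P_\ell f\|_{L^2(\Omega)}$ for $f\in H^1(\Omega)$ and all $i,j\in\{1,\dots,n\}$. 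First I would apply \eqref{eq:HighDerivsPoincare} with $k=|\alpha|=1$: then $\alpha=\be_i$, the middle sum collapses to its single term $\nu=\be_i$, and
\[
  \partial_j\partial_i P_\ell f = \delta_{jm}\,P_1^{\ell+1}[\partial_i f] + \delta_{im}\,\partial_j P_1^{\ell} f + \partial_j P_2^{\ell+1}[\partial_i f].
\]

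Next I would estimate the three summands separately in $L^2(\Omega)$ with the results of Sections~\ref{subsub:boundPdk}--\ref{subsub:boundPthetak}. Since $2\ell>n$ we have $\ell+1>\ell>\tfrac n2>\tfrac{n-2}2$, so the first (``large order'') cases of Theorems~\ref{thm:boundPtheta} and \ref{thm:boundPd} apply to every operator that occurs. For the first summand, $P_1^{\ell+1}=P_\theta^{\ell+1}$, so the first case of Theorem~\ref{thm:boundPtheta} with \eqref{eq:PropertiesTheta} gives $\|P_1^{\ell+1}[\partial_i f]\|_{L^2(\Omega)}\le\frac{1}{\ell+1-n/2}\|\theta\|_{L^1(\R^n)}\|\partial_i f\|_{L^2(\Omega)}\le C(n,\ell)\|f\|_{H^1(\Omega)}$. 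For the second summand, $\partial_j P_1^{\ell} f$ is the operator $P_\partial^{\ell}$ with $\phi=\phi_1=\theta$, so the first case of Theorem~\ref{thm:boundPd} together with \eqref{eq:boundCphi1} and \eqref{eq:PropertiesTheta} gives $\|\partial_j P_1^{\ell} f\|_{L^2(\Omega)}\le C(n,\ell)\big(\mathsf{C}(\theta,\rho)+\rho^{-1}\big)\|f\|_{L^2(\Omega)}\le C(n,\ell)\,\rho^{-1}\|f\|_{H^1(\Omega)}$. For the third summand, $\partial_j P_2^{\ell+1}[\partial_i f]$ is the operator $P_\partial^{\ell+1}$ with $\phi=\phi_2$, $\phi_2(z)=z_m\theta(z)$, applied to $\partial_i f$; the first case of Theorem~\ref{thm:boundPd} and the bound $\mathsf{C}(\phi_2,\rho)+\|\partial_j\phi_2\|_{L^1(\R^n)}\le C(n)$ established in the proof of Corollary~\ref{cor2} give $\|\partial_j P_2^{\ell+1}[\partial_i f]\|_{L^2(\Omega)}\le C(n,\ell)\|\partial_i f\|_{L^2(\Omega)}\le C(n,\ell)\|f\|_{H^1(\Omega)}$.

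Collecting these, summing over $i,j$ and over the finitely many components of $\Poinc u$, gives $|\Poinc u|_{H^2(\Omega,\Lambda^{\ell-1})}\le C(n,\ell)\big(1+\rho^{-1}\big)\|u\|_{H^1(\Omega,\Lambda^\ell)}$, which is the asserted estimate. I do not expect a genuine obstacle in this range: the only point that must be checked is precisely the one used above, namely that $2\ell>n$ pushes all the operators appearing — of orders $\ell$ and $\ell+1$ — into the clean ``large order'' regime, so that no $|\Omega|/|B|$ factor is ever incurred; this is exactly why the statement is confined to $2\ell>n$ and to $k=1$. For general $k$, formula \eqref{eq:HighDerivsPoincare} generates auxiliary operators of all intermediate orders $\ell+|\alpha-\nu|$, and the middle term would then have to be treated with the interpolation-based ``small order'' estimates of Sections~\ref{subsub:boundPdk}--\ref{subsub:boundPthetak}, carrying out the $|\Omega|/|B|$ bookkeeping as in Section~\ref{sub:finalestP} — the ``detailed analysis'' the authors choose to skip.
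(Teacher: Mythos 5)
Your argument reproduces the paper's own proof: expand via \eqref{eq:HighDerivsPoincare} with $|\alpha|=1$, observe that $2\ell>n$ forces every auxiliary operator into the large-order regimes of Theorems~\ref{thm:boundPd} and \ref{thm:boundPtheta}, and then invoke \eqref{eq:boundCphi1} and \eqref{eq:PropertiesTheta} to convert the resulting constants into $C(n,\ell)/\rho$. The only cosmetic difference is that you (correctly, reading the formula literally) treat the middle term as $\partial_j P_1^\ell f$ and apply Theorem~\ref{thm:boundPd} with $k=\ell$, whereas the paper's displayed $C_3$ is written for $k=\ell+1$; both indices satisfy $k>(n-2)/2$ under the hypothesis $2\ell>n$, so this changes nothing, and the $(1+\rho^{-1})$ you end with is exactly what the paper's own $C_1+C_2+C_3$ yields.
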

\begin{proof}
Clearly, it suffices to estimate each one of the terms in \eqref{eq:HighDerivsPoincare} for $k=1$ and multiindices $\nu$ such that $|\alpha - \nu| = 0$. Appealing to Theorems~\ref{thm:boundPd} and \ref{thm:boundPtheta} with $k = \ell+1$ then we get that
\[
  | \Poinc u |_{H^2(\Omega, \Lambda^{\ell-1})} \leq (C_1 + C_2 + C_3) \| u \|_{H^1(\Omega, \Lambda^\ell)},
\]
with
\[
  C_1 = \frac1{\ell+1-n/2},
\]
and, for $i=2,3$,
\[
  C_i = C_i(\phi) = 2^{(n-2)/2} \mathsf{C}(\phi,\rho) + \frac{2^{n/2-\ell-1}}{\ell-n/2+2} \| \partial_j \phi \|_{L^1(\R^n)},
\]
where $\phi(z) = z_m \theta(z)$ for $i=2$ and $\phi(z) = \theta(z)$ for $i=3$. Estimates \eqref{eq:boundCphi1} and properties of $\theta$ show that
\[
  C_2 \leq C(n), \qquad C_3 \leq \frac{C(n)}\rho.
\]
This is the claimed estimate.
\end{proof}

\section{The Bogovski\u{\i}--type operators}
\label{sec:Bogovskii}

In this section we obtain bounds on the Bogovski\u{\i}--type operator defined in \eqref{eq:defBogovskii}. To keep the presentation within reasonable limits, many of the computations will be skipped as they repeat much of what we have already accomplished for the Poincar\'e--type operator in previous sections.

A simple change of variables allows us to write
\[
  \Bog u(x) = \int_0^1 (1-s)^{n-\ell} \int \theta \left(y + \frac{x-y}{s} \right) \frac{x-y}{s} \ipd u(y) \diff y \frac{\diff s}{s^n},
\]
so that, if $u(x) = \sum_I u_I(x) \diff x_I$, then
\[
  \Bog u(x) =  \sum_{I} \sum_{m=1}^\ell (-1)^{m-1}
    \int_0^1 (1-s)^{n-\ell} \int \theta  \left(y + \frac{x-y}{s} \right) \frac{x_m-y_m}{s} u_I(y) \diff y \frac{\diff s}{s^n}
   \diff x_{\hat I_m}.
\]

The computations presented above show that, to accomplish our goals, it suffices to consider, for $m \in \{1,\ldots, n\}$ and $f \in L^2(\R^n)$ such that $\supp f \subset \bar\Omega$ the operator
\begin{equation}
\label{eq:BogReduced}
  Q_\ell f(x) = \int_0^1  (1-s)^{n-\ell} \int  \theta \left(y + \frac{x-y}{s} \right) \frac{x_m-y_m}{s} f(y) \diff y \frac{\diff s}{s^n}.
\end{equation}
We, first of all, observe that $Q_\ell f(x) = -Q_1^\ell g(x) + Q_2^\ell f(x)$ where, $g(y)=yf(y)$ and  for $k \in \polN$, 
\begin{equation}
\label{eq:defQi}
  Q_i^\ell v(x) =  \int_0^1  (1-s)^{n-\ell} \int  \phi_i \left(y + \frac{x-y}{s} \right) v(y) \diff y \frac{\diff s}{s^n},
\end{equation}
with
\[
  \phi_i(z) = 
  \begin{dcases}
    \theta(z), & i = 1, \\ \theta(z)z_m & i = 2.
  \end{dcases}
\]
As a consequence,
\begin{equation}
\label{eq:derivBogovskii}
  \partial_j Q_\ell f(x) = -\partial_j Q_1^\ell g(x) + \partial_j Q_2^\ell f(x),
\end{equation}
where $g(y)=yf(y)$. Thus, for $k \in \polN$ we need to estimate the following type of operator:
\begin{align*}
  Q_\partial^\ell v(x) &= \lim_{\eps \downarrow 0} \int_\eps^1  (1-s)^{n-\ell} \int  \partial_j\left[\phi \left(y + \frac{x-y}{s} \right)\right] v(y) \diff y \frac{\diff s}{s^n}. \\
\end{align*}
We write $Q_\partial^\ell v(x) =  Q_\partial^{\ell,L}v(x)+ Q_\partial^{\ell,U}v(x) $ where
\begin{align*}
    Q_\partial^{\ell,L}v(x) &= \lim_{\eps \downarrow 0}   Q_{\partial,\eps}^{\ell,L}v(x)  , \\
    Q_\partial^{\ell,U}v(x) &=   \int_{1/2}^1  (1-s)^{n-\ell} \int  \partial_j\left[\phi \left(y + \frac{x-y}{s} \right)\right] v(y) \diff y \frac{\diff s}{s^n},
\end{align*}
and
\begin{equation*}
 Q_{\partial, \eps}^{\ell,L}v(x) =\int_{\eps}^{1/2}  (1-s)^{n-\ell} \int  \partial_j\left[\phi \left(y + \frac{x-y}{s} \right)\right] v(y) \diff y \frac{\diff s}{s^n}.
\end{equation*}

As in the case of the Poincar\'e operator, we estimate each one of these separately. It is interesting to note that the techniques used here are, in a sense, dual to those needed in previous section.

\subsection{Bound for  $Q_\partial^{\ell,L}$}

We begin by bounding $Q_\partial^{\ell,L}$. This will be accomplished via the Fourier transform.

\begin{lemma}[Fourier transform]
We have that
\begin{equation}\label{aux1}
  \widehat{ Q_\partial^{\ell,L}v}(\xi)=2 \pi \imath \xi_j \int_{0}^{\frac{1}{2}}  (s-1)^{n-\ell} \widehat{\phi}(s \xi) \widehat{v}((1-s) \xi) \diff s. 
\end{equation}
\end{lemma}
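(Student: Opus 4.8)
The plan is to carry out the computation dual to that of Lemma~\ref{lem:FurtransPdU}, working with the truncated operator $Q_{\partial,\eps}^{\ell,L}$ and passing to the limit $\eps\downarrow0$ only at the end. First I would take the Fourier transform in $x$ of $Q_{\partial,\eps}^{\ell,L}v$ and interchange the $x$--, $y$-- and $s$--integrations without scruple; this is legitimate because, by the Remark following Theorem~\ref{thm:Costabel}, $Q_\partial^{\ell,L}$ extends to a bounded operator on $L^2(\R^n)$, so its Fourier transform is a genuine $L^2$ function and every identity below is understood in $L^2$.

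Second, I would integrate by parts in the variable $x_j$ in the innermost integral, moving the derivative $\partial_j=\partial_{x_j}$ off $\phi\bigl(y+\tfrac{x-y}{s}\bigr)$ and onto the character $e^{-2\pi\imath\xi\cdot x}$; since $\phi\in C_0^\infty$ and $v$ has compact support, the boundary terms vanish and one gains the factor $2\pi\imath\xi_j$, exactly as in Lemma~\ref{lem:FurtransPdU}. Third, in the remaining $x$--integral I would substitute $z=y+\tfrac{x-y}{s}$, i.e.\ $x=sz+(1-s)y$, so that $\diff x=s^n\diff z$ and $\xi\cdot x=(s\xi)\cdot z+((1-s)\xi)\cdot y$. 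The Jacobian $s^n$ cancels the weight $s^{-n}$, the exponential factors as $e^{-2\pi\imath(s\xi)\cdot z}\,e^{-2\pi\imath((1-s)\xi)\cdot y}$, and one recognizes $\int\phi(z)\,e^{-2\pi\imath(s\xi)\cdot z}\diff z=\widehat\phi(s\xi)$ and, in the leftover $y$--integral, $\int v(y)\,e^{-2\pi\imath((1-s)\xi)\cdot y}\diff y=\widehat v((1-s)\xi)$; the weight $(1-s)^{n-\ell}$ coming from the definition of $Q_\partial^{\ell,L}$ is carried along untouched. Assembling the pieces gives
\[
  \widehat{Q_{\partial,\eps}^{\ell,L}v}(\xi)=2\pi\imath\,\xi_j\int_\eps^{1/2}(1-s)^{n-\ell}\,\widehat\phi(s\xi)\,\widehat v((1-s)\xi)\diff s,
\]
which is \eqref{aux1} with the $\int_\eps^{1/2}$ truncation still in place (in doing the sign bookkeeping one should be careful whether the weight reads $(1-s)^{n-\ell}$ or $(s-1)^{n-\ell}$, these differing by $(-1)^{n-\ell}$).

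Finally, I would let $\eps\downarrow0$. The integrand in $s$ is continuous on the whole closed interval $[0,1/2]$: the weight is bounded there, and $\widehat\phi,\widehat v\in L^\infty$ as Fourier transforms of $L^1$ functions; in particular the $s^{-n}$ singularity of the original integrand has been absorbed by the change of variables, so dominated convergence yields \eqref{aux1}. I do not anticipate a genuine obstacle here: the argument is a mirror image of Lemma~\ref{lem:FurtransPdU}, and the only delicate points are the justification of the interchanges of integration and of differentiation under the integral sign (furnished by the Remark after Theorem~\ref{thm:Costabel}) together with the legitimacy of the limit $\eps\downarrow0$, which rests on the absolute convergence of the $s$--integral near $s=0$.
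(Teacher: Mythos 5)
Your proposal is correct and mirrors the paper's proof step for step: take the Fourier transform of the truncated operator, integrate by parts in $x_j$ to pull out $2\pi\imath\xi_j$, substitute $z = y + (x-y)/s$ so the Jacobian $s^n$ cancels the $s^{-n}$ and the exponential factors into $\widehat\phi(s\xi)\,\widehat v((1-s)\xi)$, then let $\eps\downarrow0$. The sign you flag is a genuine typo in the paper: the definition of $Q_{\partial,\eps}^{\ell,L}$ carries the weight $(1-s)^{n-\ell}$, while the lemma statement and the paper's proof write $(s-1)^{n-\ell}$; the discrepancy is $(-1)^{n-\ell}$ and is harmless because the subsequent estimate uses only $|1-s|^{2(n-\ell)}$.
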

\begin{proof}
We take the Fourier transform:
\begin{equation}\label{aux2}
  \widehat{ Q_{\partial,\eps}^{\ell,L}v}(\xi)= \int \int \int_{\eps}^{\frac{1}{2}}  (s-1)^{n-\ell} \partial_{j} \left[\phi\left( y+\frac{x-y}{s} \right)  \right] v(y) e^{-2 \pi \imath x \cdot \xi}  \frac{ds}{s^n} \diff y \diff x.
\end{equation}
We integrate by parts to obtain
\begin{equation*}
  \widehat{ Q_{\partial,\eps}^{\ell,L} v }(\xi)= 2 \pi \imath \xi_j  \int_{\eps}^{\frac{1}{2}}  (s-1)^{n-\ell} \int \int  \phi\left( y+\frac{x-y}{s} \right)   v(y) e^{-2 \pi \imath x \cdot \xi}  \diff x \diff y  \frac{\diff s}{s^n}.
\end{equation*}
Making the change of variables  $z=y + \tfrac{x-y}{s}$ we get 
\begin{equation*}
  \widehat{Q_{\partial,\eps}^{\ell,L}v}(\xi)= 2 \pi \imath \xi_j  \int_{\eps}^{\frac{1}{2}}  (s-1)^{n-\ell} \int \int  \phi( z)   v(y) e^{-2 \pi \imath (sz+(1-s) y) \cdot \xi}  \diff z \diff y \diff s.
\end{equation*}
Thus, 
\begin{alignat*}{1}
  \widehat{Q_{\partial,\eps}^{\ell,L}v}(\xi)=& 2 \pi \imath \xi_j  \int_{\eps}^{\frac{1}{2}}  (s-1)^{n-\ell}  \hat{\phi}( s\xi) \int    v(y) e^{-2 \pi \imath (1-s) y \cdot \xi}  \diff y \diff s\\
  =& 2 \pi \imath \xi_j  \int_{\eps}^{\frac{1}{2}}  (s-1)^{n-\ell}  \hat{\phi}( s\xi)   \hat{v}((1-s) \xi)    \diff s.
\end{alignat*}
The identity \eqref{aux1} follows by taking the limit $\eps \downarrow 0$.
\end{proof}

\begin{lemma}[estimate on $Q_\partial^{\ell,L}$]
It holds that
\begin{equation}\label{QL}
  \| Q_{\partial}^{\ell,L}v\|_{L^2(\Omega)} \le   c_{n+1-2\ell} \mathsf{C}(\phi,\rho ) \|v\|_{L^2(\Omega)},
\end{equation}
where $c_r^2= \max_{0 \le t \le 1} \frac{1}{|1+t|^r}$.
\end{lemma}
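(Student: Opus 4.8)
The plan is to estimate $\| Q_\partial^{\ell,L} v \|_{L^2(\Omega)}$ through its Fourier transform, exactly mirroring the duality between this argument and the one used for $P_\partial^{k,U}$ in Proposition~\ref{prop:boundPpartialU}. Starting from the identity \eqref{aux1},
\[
  \widehat{ Q_\partial^{\ell,L}v}(\xi)=2 \pi \imath \xi_j \int_{0}^{1/2}  (s-1)^{n-\ell} \widehat{\phi}(s \xi) \widehat{v}((1-s) \xi) \diff s,
\]
I would apply the Cauchy--Schwarz inequality in the $s$ variable, splitting the integrand as $\bigl( (1-s)^{n-\ell}|\hat\phi(s\xi)| \bigr)^{1/2} \cdot \bigl( (1-s)^{n-\ell}|\hat\phi(s\xi)| \, |\hat v((1-s)\xi)|^2 \bigr)^{1/2}$, so that
\[
  |\widehat{ Q_\partial^{\ell,L}v}(\xi)|^2 \leq \mathrm{I}(\xi)\cdot \mathrm{II}(\xi),
\]
with $\mathrm{I}(\xi) = 2\pi|\xi_j| \int_0^{1/2}(1-s)^{n-\ell}|\hat\phi(s\xi)|\diff s$ and $\mathrm{II}(\xi) = 2\pi|\xi_j| \int_0^{1/2}(1-s)^{n-\ell}|\hat\phi(s\xi)|\,|\hat v((1-s)\xi)|^2\diff s$.

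Next I would bound $\mathrm{I}(\xi)$ uniformly in $\xi$. On the range $s \in [0,1/2]$ we have $(1-s)^{n-\ell} \leq c_{n+1-2\ell}^2$ if $n-\ell \geq 0$, and otherwise $(1-s)^{n-\ell} = |1-s|^{-(\ell-n)} \leq \max_{0\le s\le 1/2}|1-s|^{-(\ell-n)} = c_{n+1-2\ell}^2$ as well, so in all cases $(1-s)^{n-\ell}\le c_{n+1-2\ell}^2$ with $c_r^2 = \max_{0\le t\le 1}|1+t|^{-r}$ after the substitution $t=1-2s$ (or directly $c_r^2=\max_{0\le s\le 1/2}(1-s)^{-r}$). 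Pulling this constant out and extending the integral, $\mathrm{I}(\xi) \leq c_{n+1-2\ell}^2 \cdot 2\pi|\xi_j|\int_0^{1/2}|\hat\phi(s\xi)|\diff s \leq c_{n+1-2\ell}^2 \cdot 2\pi|\xi_j|\int_{-\infty}^0 |\hat\phi(t\xi)|\diff t \leq c_{n+1-2\ell}^2\, \mathsf{C}(\phi,\rho)$ by Lemma~\ref{lem:Lemma2.3Duran} (using that $\hat\phi$ applied to $s\xi$ for $s\in[0,1/2]$ is controlled by the same tail integral, after a sign flip if needed, since $|\hat\phi(s\xi)|=|\hat\phi(-s\xi)|$ when $\phi$ is real). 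Then I integrate $\mathrm{II}(\xi)$ over $\xi\in\R^n$, bound $(1-s)^{n-\ell}\le c_{n+1-2\ell}^2$ again, change variables $z=(1-s)\xi$ (so $\diff\xi = (1-s)^{-n}\diff z$ and $|\xi_j| = |z_j|/(1-s)$), apply Fubini, and recognize the inner $s$-integral as again of the form handled by Lemma~\ref{lem:Lemma2.3Duran}, yielding $\int|\widehat{Q_\partial^{\ell,L}v}(\xi)|^2\diff\xi \leq c_{n+1-2\ell}^4\,\mathsf{C}(\phi,\rho)^2 \|\hat v\|_{L^2(\R^n)}^2$. Plancherel's identity then gives \eqref{QL}.

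The main obstacle I anticipate is bookkeeping the Jacobian and the weight $(1-s)^{n-\ell}$ through the change of variables in the double integral: after substituting $z=(1-s)\xi$ one must check that the powers of $(1-s)$ combine correctly (the $(1-s)^{n-\ell}$ weight, the $(1-s)^{-n}$ Jacobian, and the $(1-s)^{-1}$ from $|\xi_j|$), so that after bounding $(1-s)^{n-\ell}$ by its maximum one is left precisely with the integral to which Lemma~\ref{lem:Lemma2.3Duran} applies. A secondary subtlety is that $\hat\phi$ is here evaluated at $s\xi$ with $s\ge 0$ rather than at $t\xi$ with $t\le 0$ as in Lemma~\ref{lem:Lemma2.3Duran}; this is harmless because $\phi$ is real valued so $|\hat\phi(-\zeta)| = |\hat\phi(\zeta)|$, allowing the tail estimate to be transported to the positive half-line, and the factor $c_{n+1-2\ell}$ is exactly what absorbs the weight $(1-s)^{n-\ell}$ on $[0,1/2]$ regardless of the sign of $n-\ell$.
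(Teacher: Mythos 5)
Your high-level plan (Cauchy--Schwarz, uniform bound on one factor via Lemma~\ref{lem:Lemma2.3Duran}, change of variables, a second application of the lemma, Plancherel) is the right skeleton, and the observation that $|\hat\phi(-\zeta)|=|\hat\phi(\zeta)|$ for real $\phi$ lets you use the tail lemma on the positive half-line is correct and necessary. However, both the placement of the weight in the Cauchy--Schwarz split and the moment at which you freeze it are off, and together they keep you from arriving at $c_{n+1-2\ell}$.

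The weight $(s-1)^{n-\ell}$ has to stay \emph{entirely} on the second Cauchy--Schwarz factor (so that $\mathrm{II}$ carries $|1-s|^{2(n-\ell)}$, not $(1-s)^{n-\ell}$), and it must be carried through the change of variables $\eta=(1-s)\xi$ \emph{before} being maximized. The reason is a cancellation your plan destroys: together with the Jacobian $(1-s)^{-n}$ and the factor $(1-s)^{-1}$ coming from $|\xi_j|=|\eta_j|/(1-s)$, the exponent collapses to $|1-s|^{2(n-\ell)-n-1}\diff s$, and the further substitution $t=s/(1-s)$ turns this into $|1+t|^{-(n+1-2\ell)}\diff t$ on $[0,1]$. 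That is precisely $c_{n+1-2\ell}^2$ at its maximum, and only after this step does Lemma~\ref{lem:Lemma2.3Duran} apply to a clean $2\pi|\eta_j|\int_0^1|\hat\phi(t\eta)|\diff t$ with no residual weight. In your version, bounding $(1-s)^{n-\ell}\le c_{n+1-2\ell}^2$ on $[0,1/2]$ \emph{before} the substitutions leaves the remaining powers of $(1-s)$ uncancelled; tracing them through gives an extra weight $(1+t)^{n-1}$ (or $(1+t)^{\ell-1}$, depending on which piece of $(1-s)^{n-\ell}$ you keep in $\mathrm{II}$), which is not of the form Lemma~\ref{lem:Lemma2.3Duran} handles and contributes a separate, generally larger factor such as $2^{n-1}$. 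Your claimed $c_{n+1-2\ell}^4\mathsf{C}(\phi,\rho)^2$ would already give only $c_{n+1-2\ell}^2\mathsf{C}(\phi,\rho)$ for the norm, strictly weaker than the stated $c_{n+1-2\ell}\mathsf{C}(\phi,\rho)$ whenever $\ell>(n+1)/2$, and even that figure is not reached by the steps as written. Note also that $c_{n+1-2\ell}^2=\max_{0\le s\le 1/2}(1-s)^{n+1-2\ell}$, not $\max_{0\le s\le 1/2}(1-s)^{n-\ell}$: the constant arises from the post-substitution weight, not from the weight in the original $s$-integral.
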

\begin{proof}
By using  \eqref{aux1}, the Cauchy--Schwarz inequality, and Lemma~\ref{lem:Lemma2.3Duran} we get
\begin{alignat*}{1}
  |\widehat{Q_\partial^{\ell,L}v}(\xi)|^2 \le &  2\pi \mathsf{C}(\phi,\rho )   |\xi_j| \int_{0}^{\frac{1}{2}}  |1-s|^{2(n-\ell)} |\widehat{\phi}(s \xi)| |\widehat{v}((1-s) \xi)|^2 \diff s . 
\end{alignat*}
Hence,  we obtain 
\begin{alignat*}{1}
  \left\|\widehat{Q_\partial^{\ell,L}v} \right\|_{L^2(\mathbb{R}^n)}^2  \le  2\pi \mathsf{C}(\phi,\rho )  \int |\xi_j| \int_{0}^{\frac{1}{2}}  |1-s|^{2(n-\ell)}    |\widehat{\phi}(s \xi)| |\widehat{v}((1-s) \xi)|^2 \diff s   \diff \xi \Big.
\end{alignat*}
Using the change of variables $\eta =(1-s) \xi$, we see that  
\begin{alignat*}{1}
  \left\| \widehat{Q_\partial^{\ell,L}v} \right\|_{L^2(\mathbb{R}^n)}^2  \le    2\pi \mathsf{C}(\phi,\rho )   \int  \int_{0}^{\frac{1}{2}}   |1-s|^{2(n-\ell)-n-1} |\eta_j| \left|\widehat{\phi}\left(\frac{s \eta}{1-s} \right) \right| \left|\widehat{v}(\eta)\right|^2 \diff s \diff \eta.
\end{alignat*}
Another change of variables  $t= \frac{s}{1-s}$ gives
\begin{alignat*}{1}
  \left\| \widehat{Q_\partial^{\ell,L}v} \right\|_{L^2(\mathbb{R}^n)}^2  \le  2\pi    \mathsf{C}(\phi,\rho )  \int  \int_{0}^{1}   \frac{1}{|1+t|^{2(n-\ell)-n+1}} |\eta_j| \left|\widehat{\phi}(t \eta) \right| \left|\widehat{v}(\eta)\right|^2 \diff t \diff \eta .
\end{alignat*}
Thus, applying \eqref{aux2} and Lemma~\ref{lem:Lemma2.3Duran}, we get
\begin{alignat*}{1}
  \left\| \widehat{Q_\partial^{\ell,L}v} \right\|_{L^2(\mathbb{R}^n)}^2  \le    c_{n+1-2\ell}^2   \mathsf{C}(\phi,\rho )^2  \left\|\widehat{v} \right\|_{L^2(\mathbb{R}^n)}^2. 
\end{alignat*}
The result follows from Plancherel's theorem. 
\end{proof}

\subsection{Bound for  $Q_\partial^{\ell,U}$}

We now bound $Q_\partial^{\ell,U}$. This is the operator where we need to argue differently depending on the size of $\ell$.

\begin{lemma}[$L^p(\Omega)$--estimate]
\label{Aflemma}
Consider $p \ge 1$ such that $n/p-\ell+1>0$. If $v \in L^p(\mathbb{R}^n)$ and  is supported in $\bar\Omega$, then
\begin{equation}\label{Afp}
  \| Q_{\partial}^{\ell,U} v\|_{L^p(\Omega)}  \le       \frac{2^{\ell-n/p}}{n/p-\ell+1}   \|\partial_j \phi\|_{L^1(\Omega)} \|v\|_{L^p(\Omega)}.
\end{equation}
Moreover, assuming that  $m \ge 0$ then
\begin{equation}\label{Afinfty}
  \|Q_{\partial}^{\ell,U} v \|_{L^\infty(\Omega)}  \le      2^{\ell} |\Omega|\, \|\partial_j\phi\|_{L^\infty(\Omega)} \|v\|_{L^\infty(\Omega)}.
\end{equation}
\end{lemma}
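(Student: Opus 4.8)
The plan is to follow the pattern of the proof of Lemma~\ref{lem:Lpestimate}: carry out the $x$--differentiation inside the kernel, make a change of variables that absorbs the apparently singular Jacobian, and then apply Minkowski's integral inequality. First I would record that, since $\partial_j$ acts on the variable $x$,
\[
  \partial_j\!\left[\phi\!\left(y+\frac{x-y}{s}\right)\right]=\frac1s\,(\partial_j\phi)\!\left(y+\frac{x-y}{s}\right),
\]
so that
\[
  Q_\partial^{\ell,U}v(x)=\int_{1/2}^1\frac{(1-s)^{n-\ell}}{s^{n+1}}\int(\partial_j\phi)\!\left(y+\frac{x-y}{s}\right)v(y)\diff y\diff s.
\]
Then, with $x$ and $s$ fixed, I would substitute $z=y+\tfrac{x-y}{s}$ in the inner integral; the absolute value of its Jacobian equals $\bigl(\tfrac{1-s}{s}\bigr)^n$ and $y=\tfrac{x-sz}{1-s}$, whence
\[
  Q_\partial^{\ell,U}v(x)=\int_{1/2}^1\frac{1}{s(1-s)^{\ell}}\int(\partial_j\phi)(z)\,v\!\left(\frac{x-sz}{1-s}\right)\diff z\diff s.
\]
The crucial point is that the factor $(1-s)^{-n}$ produced by the Jacobian cancels against $(1-s)^{n-\ell}s^{-n-1}$ down to the weight $\bigl(s(1-s)^{\ell}\bigr)^{-1}$, which becomes integrable near $s=1$ once one picks up a further $(1-s)^{n/p}$ in the next step.

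To prove \eqref{Afp}, I would apply Minkowski's integral inequality in the $s$ and $z$ variables and then, inside the $L^p(\Omega)$ norm of $v\!\left(\tfrac{\,\cdot\,-sz}{1-s}\right)$, perform the affine change of variables $w=\tfrac{x-sz}{1-s}$, for which $\diff x=(1-s)^n\diff w$; since $\supp v\subset\bar\Omega$ this bounds that norm by $(1-s)^{n/p}\|v\|_{L^p(\Omega)}$, uniformly in $z$. What remains is the scalar estimate
\[
  \|v\|_{L^p(\Omega)}\,\|\partial_j\phi\|_{L^1(\Omega)}\int_{1/2}^1\frac{(1-s)^{n/p-\ell}}{s}\diff s\le 2\,\|v\|_{L^p(\Omega)}\,\|\partial_j\phi\|_{L^1(\Omega)}\int_{1/2}^1(1-s)^{n/p-\ell}\diff s,
\]
and the hypothesis $n/p-\ell+1>0$ is precisely what makes this last integral converge; carrying it out, the right-hand side equals $\tfrac{2^{\ell-n/p}}{n/p-\ell+1}\|\partial_j\phi\|_{L^1(\Omega)}\|v\|_{L^p(\Omega)}$, which is \eqref{Afp}.

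For \eqref{Afinfty} one cannot simply let $p\to\infty$ in \eqref{Afp}, because the integral above diverges as soon as $n/p-\ell+1\le0$, which already occurs for $\ell\ge1$. Instead I would estimate directly from the formula before the substitution $z=y+\tfrac{x-y}{s}$, bounding $|v(y)|$ by $\|v\|_{L^\infty(\Omega)}$, the domain of the $\diff y$--integral by $|\Omega|$ (using $\supp v\subset\bar\Omega$), and $|\partial_j\phi|$ by $\|\partial_j\phi\|_{L^\infty(\Omega)}$; what is left is $\int_{1/2}^1(1-s)^{n-\ell}s^{-n-1}\diff s\le 2^{n+1}\int_{1/2}^1(1-s)^{n-\ell}\diff s=\tfrac{2^{\ell}}{n-\ell+1}\le 2^{\ell}$, which gives the stated bound.

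I expect the computation to be entirely routine; the only points that require care are getting the exponent in the Jacobian exactly right so that the singular weight collapses as described, and recognizing that \eqref{Afinfty} must be proved by a separate, cruder direct estimate rather than recovered as a limiting case of \eqref{Afp}.
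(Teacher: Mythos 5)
Your proof is correct and follows essentially the same route as the paper's: rewrite $Q_\partial^{\ell,U}$ with the explicit $1/s$ from differentiating the kernel, substitute $z=y+\tfrac{x-y}{s}$ to cancel the $(1-s)^{-n}$ Jacobian, apply Minkowski's integral inequality, change variables inside the inner $L^p$ norm to pick up $(1-s)^{n/p}$, and bound $1/s\le 2$ on $[1/2,1]$; the $L^\infty$ bound is obtained by the same crude direct estimate as in the paper. The only cosmetic difference is that the paper pulls out the factor $1/s\le 2$ one step earlier than you do.
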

\begin{proof}
We first prove \eqref{Afp}.  We can write
\begin{equation}\label{aux3}
  Q_{\partial}^{\ell,U} v(x)=  \int_{1/2}^1  (1-s)^{n-\ell} \int  \partial_j\phi \left(y + \frac{x-y}{s} \right) v(y) \diff y \frac{\diff s}{s^{n+1}}.
\end{equation}
We do the change of variables $z=y+\tfrac{x-y}{s}$ then we get 
\begin{equation}
 | Q_{\partial}^{\ell,U} v(x)| \le 2  \int \int_{\frac{1}{2}}^1  |1-s|^{-\ell}  |\partial_{j} \phi(z)| \left|v \left(\frac{sz-x}{s-1} \right) \right| \diff s \diff z.
\end{equation}
If we raise to power $p$, integrate, and use the Minkowski's inequality for integrals we get
\begin{equation*}
  \|Q_{\partial}^{\ell,U} v\|_{L^p(\Omega)} \le  2 \int \int_{\frac{1}{2}}^1  |1-s|^{-\ell} |\partial_{j}\phi(z)| \left(\int \left|v\left( \frac{sz-x}{s-1} \right) \right|^p  \diff x\right)^{1/p} \diff s \diff z. 
\end{equation*}
Then, applying the change of variables $\bar{x}=\tfrac{sz-x}{1-s}$  we get 
\begin{equation*}
 \|Q_{\partial}^{\ell,U} v\|_{L^p(\Omega)} \le  2 \int \int_{\frac{1}{2}}^1  |1-s|^{-\ell+n/p} |\partial_{j}\phi(z)| \left(\int   |v(\bar{x})|^p  \diff \bar{x} \right)^{1/p}  \diff s \diff z
\end{equation*}
Hence, we get 
\begin{alignat*}{1}
 \|Q_{\partial}^{\ell,U} v\|_{L^p(\Omega)} \le &  2 \|v\|_{L^p(\Omega)} \int \int_{\frac{1}{2}}^1  |1-s|^{-\ell+\frac{n}{p}} |\partial_{j}\phi(z)|  \diff s \diff z \\ 
 = &  2\|v\|_{L^p(\Omega)} \|\partial_{j}\phi\|_{L^1(B)} \int_{\frac{1}{2}}^1  |1-s|^{n/p-\ell}   \diff s \\
 =&  \frac{2^{\ell-n/p}}{n/p-\ell+1}  \|v\|_{L^p(\Omega)} \|\partial_{j} \phi\|_{L^1(B)}.
\end{alignat*}
The result \eqref{Afp} follows by applying the triangle inequality. 

Inequality  \eqref{Afinfty} easily follows from \eqref{aux3} and using that  $\int_{\frac{1}{2}}^{1}  \frac{|s-1|^{n-\ell}}{s^{n+1}} \diff s\le 2^{\ell}$.
\end{proof}

We can now apply the Riesz--Thorin interpolation theorem in conjunction with Lemma \ref{Aflemma} to obtain an $L^2$--bound. Again we must distinguish two cases depending on the size of $\ell$.
 
\begin{corollary}[estimate for $Q_{\partial}^{\ell,U}$]
\label{corAf}
 Let $v \in L^2(\Omega)$. If $n/2+1 > \ell $, then
 \begin{equation}\label{Af2-1}
  \left\|Q_{\partial}^{\ell,U} v \right\|_{L^2(\Omega)}  \le   \frac{2^{\ell-n/2}}{n/2-\ell+1}     \|\partial_j \phi\|_{L^1(\Omega)}  \|v\|_{L^2(\Omega)}.
\end{equation}
 On the other hand, if  $n/2+1 \le \ell $, then for every $p < \tfrac{n}{\ell-1} \le 2$  it holds
\begin{equation}\label{Af2}
  \|Q_{\partial}^{\ell,U} v\|_{L^2(\Omega)}  \le   \left(\frac{2^{\ell-n/p}}{n/p-\ell+1}   \|\partial_j \phi\|_{L^1(\Omega)} \right)^{\frac{p}2}  \left(2^{\ell} |\Omega| \|\partial_j\phi\|_{L^\infty(\Omega)} \right)^{1-\frac{p}2}   \|v\|_{L^2(\Omega)}.
\end{equation}
\end{corollary}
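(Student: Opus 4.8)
The plan is to read both inequalities off Lemma~\ref{Aflemma}, the distinction between the two cases being precisely whether the exponent $p=2$ is admissible in \eqref{Afp}. Indeed, the hypothesis $n/p-\ell+1>0$ of that lemma becomes, for $p=2$, the condition $n/2-\ell+1>0$, that is $n/2+1>\ell$. So in the first case I would simply apply \eqref{Afp} with $p=2$; this gives \eqref{Af2-1} directly, with no interpolation needed, and the constant $\|\partial_j\phi\|_{L^1(\Omega)}$ is exactly the one appearing there.

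In the second case, $n/2+1\le\ell$, the value $p=2$ is no longer permitted, since $n/p-\ell+1>0$ forces $p<n/(\ell-1)$ and $n/(\ell-1)\le 2$ exactly because $\ell-1\ge n/2$. I would therefore fix any admissible $p<n/(\ell-1)\le2$ and interpolate: the operator $Q_\partial^{\ell,U}$ is linear, maps $L^p(\Omega)$ into $L^p(\Omega)$ with the constant of \eqref{Afp}, and maps $L^\infty(\Omega)$ into $L^\infty(\Omega)$ with the constant of \eqref{Afinfty}. Applying the Riesz--Thorin interpolation theorem \cite[Theorem 1.3.4]{MR3243734} between these two endpoints, with interpolation parameter $\lambda$ determined by $\tfrac12=\tfrac{\lambda}{p}+\tfrac{1-\lambda}{\infty}$, i.e. $\lambda=p/2\in(0,1)$, the $L^p$ endpoint receives weight $p/2$ and the $L^\infty$ endpoint weight $1-p/2$, which yields exactly \eqref{Af2}.

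I do not expect a genuine obstacle here; the only points requiring care are the elementary arithmetic linking each case assumption to admissibility of the relevant Lebesgue exponent ($n/2+1>\ell \iff n/2-\ell+1>0$, and $n/2+1\le\ell \iff n/(\ell-1)\le 2$), and checking that both endpoint bounds in Lemma~\ref{Aflemma} are actually available in the regime considered --- in particular the $L^\infty$ bound \eqref{Afinfty}, whose stated hypothesis $m\ge0$ is automatic since $m\in\{1,\dots,n\}$. If one preferred to avoid citing the interpolation theorem, the same $L^2$ bound follows by Hölder's inequality after splitting $|v|^2=|v|^p|v|^{2-p}$ inside the integral, but invoking Riesz--Thorin is cleaner and mirrors the treatment of the Poincar\'e--type operator in the previous section.
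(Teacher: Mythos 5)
Your proof is correct and is exactly what the paper intends; the paper's own ``proof'' merely says ``repeat the arguments used in the proofs of Corollary~\ref{cor:BoundTLellbig} and Proposition~\ref{prop:estimasmallk}'' and leaves the details to the reader. The one detail worth making explicit is the one you correctly caught: the interpolation here is the \emph{dual} of the Poincar\'e case. There the admissible exponents in the $L^q$-estimate were bounded below and one interpolated between $L^1$ and $L^p$ with $p>2$; here the admissible exponents in \eqref{Afp} are bounded above by $n/(\ell-1)\le 2$, so one interpolates between $L^p$ with $p<2$ and $L^\infty$, and the interpolation parameter works out to $\lambda=p/2$ (with $\tfrac12=\tfrac{1-\lambda}{p}+\tfrac{\lambda}{\infty}$ in the standard Riesz--Thorin parametrization), giving the exponents $p/2$ and $1-p/2$ in \eqref{Af2}. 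This duality is precisely what the paper alludes to in the remark preceding this subsection. One small caveat on your final aside: the ``elementary Hölder'' alternative you sketch (writing $|v|^2=|v|^p\,|v|^{2-p}$) would end up requiring control of $\|v\|_{L^\infty}$ by $\|v\|_{L^2}$, which is not available in general, so the interpolation-theorem route is not just cleaner but actually necessary here; this does not affect the main argument, which is sound.
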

\begin{proof}
We repeat the arguments that were used in the proofs of Corollary~\ref{cor:BoundTLellbig} and Proposition~\ref{prop:estimasmallk}. For brevity we skip the details.
\end{proof}

We can now prove the main result concerning the estimates on $\partial_j Q_{2}^\ell$.

\begin{lemma}[bound on $\partial_j Q_{2}^\ell $]
Let $f \in L^2(\Omega)$. If  $n/2+1 > \ell $ then
\begin{equation}\label{Q21}
  \|\partial_j Q_{2}^\ell f\|_{L^2(\Omega)} \le C(n, \ell) \|f\|_{L^2(\Omega)}.
\end{equation}
On the other hand, if  $n/2+1 \le \ell $, then
\begin{equation}\label{Q22}
\|\partial_j Q_{2}^\ell f\|_{L^2(\Omega)} \le  C(n,\ell) \left[1+  \left(\log \left(\frac{|\Omega|}{|B|} \right) \right)^{\frac{n}{2(\ell-1)}} \left(\frac{|\Omega|}{|B|} \right)^{\frac{2(\ell-1)-n}{2(\ell-1)}} \right] \|f \|_{L^2(\Omega)}. 
\end{equation}
\end{lemma}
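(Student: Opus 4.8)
The proof will treat $\partial_j Q_2^\ell$ as a special case of the operator $Q_\partial^\ell$ analyzed above: by construction $\partial_j Q_2^\ell f = Q_\partial^\ell f$ with the choice $\phi = \phi_2$, where $\phi_2(z) = z_m\theta(z)$, and hence $\partial_j Q_2^\ell f = Q_\partial^{\ell,L}f + Q_\partial^{\ell,U}f$. The lower part is controlled by \eqref{QL} and the upper part by Corollary~\ref{corAf}, so everything reduces to controlling the three quantities $\mathsf{C}(\phi_2,\rho)$, $\|\partial_j\phi_2\|_{L^1(\Omega)}$ and $\|\partial_j\phi_2\|_{L^\infty(\Omega)}$ that enter those bounds. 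Using $\partial_j\phi_2 = \delta_{jm}\theta + z_m\partial_j\theta$ and $\partial_j^2\phi_2 = 2\delta_{jm}\partial_j\theta + z_m\partial_j^2\theta$, the estimate $|z_m| \le C\rho$ on $\supp\theta \subset B$, and the properties \eqref{eq:PropertiesTheta} of $\theta$, one obtains exactly as in the treatment of $\phi_2$ in Corollary~\ref{cor2} the bounds
\[
  \mathsf{C}(\phi_2,\rho) \le C(n), \qquad \|\partial_j\phi_2\|_{L^1(\Omega)} \le C(n), \qquad \|\partial_j\phi_2\|_{L^\infty(\Omega)} \le \frac{C(n)}{|B|}.
\]
One also records that $c_{n+1-2\ell}$ equals $1$ when $2\ell \le n+1$ and $2^{(2\ell-n-1)/2}$ otherwise, so that $c_{n+1-2\ell} \le C(n,\ell)$ in all cases.

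The case $n/2+1 > \ell$ is then immediate: combining \eqref{QL} with the first alternative \eqref{Af2-1} of Corollary~\ref{corAf} and the bounds just obtained gives
\[
  \|\partial_j Q_2^\ell f\|_{L^2(\Omega)} \le \left( c_{n+1-2\ell}\,\mathsf{C}(\phi_2,\rho) + \frac{2^{\ell-n/2}}{n/2-\ell+1}\,\|\partial_j\phi_2\|_{L^1(\Omega)} \right)\|f\|_{L^2(\Omega)} \le C(n,\ell)\,\|f\|_{L^2(\Omega)},
\]
which is \eqref{Q21}.

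For $n/2+1 \le \ell$, combining \eqref{QL} with the second alternative \eqref{Af2} of Corollary~\ref{corAf} yields, for every admissible $p$ with $1 \le p < n/(\ell-1) \le 2$,
\[
  \|\partial_j Q_2^\ell f\|_{L^2(\Omega)} \le \left[\, C(n,\ell) + \left(\frac{2^{\ell-n/p}}{n/p-\ell+1}\,C(n)\right)^{p/2}\left(\frac{2^\ell\,C(n)\,|\Omega|}{|B|}\right)^{1-p/2} \right]\|f\|_{L^2(\Omega)}.
\]
It remains to choose $p$. Parametrizing $1/p = \tfrac{\ell-1}{n} + \delta$ with a small $\delta>0$ gives $n/p-\ell+1 = n\delta$ and, as $\delta\downarrow 0$, $p/2 \to \tfrac{n}{2(\ell-1)}$ and $1-p/2 \to \tfrac{2(\ell-1)-n}{2(\ell-1)}$; more precisely $1-p/2 = \tfrac{2(\ell-1)-n}{2(\ell-1)} + \tfrac{n^2\delta}{2(\ell-1)(\ell-1+n\delta)}$. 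Calibrating $\delta$ to be of order $1/\log(|\Omega|/|B|)$, in the same spirit as the auxiliary parameter $\epsilon = \mathcal{C}A^2/(1-\mathcal{C}A)$ used in the proofs of Corollaries~\ref{cor1} and \ref{cor2}, makes the correction to the exponent of $|\Omega|/|B|$ of order $1/\log(|\Omega|/|B|)$, whence $(|\Omega|/|B|)^{1-p/2} \le C(n,\ell)\,(|\Omega|/|B|)^{(2(\ell-1)-n)/(2(\ell-1))}$, while simultaneously $(n/p-\ell+1)^{-p/2} = (n\delta)^{-p/2} \le C(n,\ell)\,(\log(|\Omega|/|B|))^{n/(2(\ell-1))}$ and $2^{(\ell-n/p)p/2} \le C(n,\ell)$. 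Assembling these two observations produces \eqref{Q22}; as in the analogous corollaries the argument is run under the hypothesis that $|\Omega|/|B|$ is sufficiently large, the complementary regime being absorbed into $C(n,\ell)$.

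I expect the only delicate point to be precisely this last optimization: one must pick $\delta$ (equivalently $p$) so that both the exponent correction and the reciprocal $(n/p-\ell+1)^{-1}$ land on exactly the asserted powers of $\log(|\Omega|/|B|)$, and one must verify the compatibility constraint $1 \le p < n/(\ell-1)$ needed for \eqref{Af2} — which is automatic once $|\Omega|/|B|$ is large, since $p \uparrow n/(\ell-1) > 1$ as $\delta \downarrow 0$.
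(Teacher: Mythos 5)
Your proposal is correct and follows essentially the same route as the paper: decompose $\partial_j Q_2^\ell = Q_\partial^{\ell,L} + Q_\partial^{\ell,U}$ with $\phi = \phi_2$, bound the lower part by \eqref{QL} and the upper by Corollary~\ref{corAf}, record $\mathsf{C}(\phi_2,\rho)\le C(n)$, $\|\partial_j\phi_2\|_{L^1}\le C(n)$, $\|\partial_j\phi_2\|_{L^\infty}\le C(n)/|B|$, then optimize over $p$ near $n/(\ell-1)$ with the small offset calibrated to $1/\log(|\Omega|/|B|)$ (under the proviso that $|\Omega|/|B|$ is large, the remaining range being absorbed into $C(n,\ell)$). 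The only cosmetic difference is that you parametrize $1/p = (\ell-1)/n + \delta$ whereas the paper sets $p = n/(\ell-1) - \epsilon$ with the explicit choice $\epsilon = 2/\log(|\Omega|/|B|)$; both give $n/p - \ell + 1 \sim 1/\log$ and the same limiting exponents, so the two are equivalent.
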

\begin{proof}
First assume that  $n/2+1 > \ell $  and using \eqref{QL} and \eqref{Af2-1} we obtain
\begin{equation*}
\|\partial_j Q_{2}^\ell f\|_{L^2(\Omega)} \le  \left( c_{n+1-2\ell} \mathsf{C}(\phi_2,\rho )+ \frac{2^{\ell-n/2}}{n/2-\ell+1}     \|\partial_j \phi_2\|_{L^1(\Omega)}   \right)  \|f\|_{L^2(\Omega)}
\end{equation*}
Then we can easily show that   $\mathsf{C}(\phi_2,\rho )+ \|\partial_j \phi\|_{L^1(\Omega)}  \le C(n)$ and so \eqref{Q21} holds. 

Next, assume that $n/2+1 \le \ell $ and using \eqref{QL} and \eqref{Af2} we get
\begin{equation*}
  \|\partial_j Q_{2}^\ell f\|_{L^2(\Omega)} \le   C(n,\ell) \left[ 1+ \left(\frac{2^{\ell-n/p}}{n/p-\ell+1} \right)^{p/2}  \left( 2^{\ell} \frac{|\Omega|}{|B|} \right)^{1-p/2}   \right]  \| f \|_{L^2(\Omega)}.
\end{equation*}
Here we also used that  $\|\partial_j\phi_2\|_{L^\infty(\Omega)} \le \frac{C(n)}{|B|}$ and that $\mathsf{C}(\phi_2,\rho ) \le C(n)$. 
We write 
\begin{equation}
1-p/2=\frac{1}{2}\left(\frac{n}{\ell-1}-p \right) + \left(1-\frac{n}{2(\ell-1)} \right),
\end{equation}
and choose $p=\frac{n}{\ell-1}-\epsilon$ where $\epsilon=\frac{2}{\log \left(\frac{|\Omega|}{|B|}\right)}$. Hence, 
\begin{equation*}
\frac{1}{2}\left(\frac{n}{\ell-1}-p \right)= \frac{1}{\log \left(\frac{|\Omega|}{|B|}\right)}
\end{equation*}
We thus have that 
\begin{equation*}
  \left(2^{\ell} \frac{|\Omega|}{|B|} \right)^{1-p/2} \le 2^{\ell( 1 - p/2 )} e \left(\frac{|\Omega|}{|B|} \right)^{\frac{2(\ell-1)-n}{2(\ell-1)}}\le C(n, \ell)  \left(\frac{|\Omega|}{|B|} \right)^{\frac{2(\ell-1)-n}{2(\ell-1)}}.
\end{equation*}

Also, we notice that
\begin{equation*}
\frac{1}{n/p-\ell+1}=\frac{p}{n-p(\ell-1)}=\frac{p}{\epsilon (\ell-1)}=\frac{p \log \left(\frac{|\Omega|}{|B|} \right)}{2(\ell-1)},
\end{equation*}
which allows us to estimate
\begin{equation*}
 \left(\frac{2^{\ell-n/p}}{n/p-\ell+1} \right)^{p/2} \le \left(\frac{p}{2(\ell-1)}\right)^{p/2} \left(\log \left(\frac{|\Omega|}{|B|} \right)\right)^{p/2} \le C(n, \ell) \left(\log \left(\frac{|\Omega|}{|B|} \right) \right)^{\frac{n}{2(\ell-1)}}. 
\end{equation*}

This concludes the proof.
\end{proof}

We now bound the term involving $\partial_j Q_{1}^\ell $.

\begin{lemma}[bound on $\partial_j Q_{1}^\ell $]
Let $f \in L^2(\Omega)$ and $g(y)=yf(y)$. If  $n/2+1 > \ell $ then
\begin{equation}\label{Q11}
\|\partial_j Q_{1}^\ell g\|_{L^2(\Omega)} \le C(n, \ell) \frac{R}{\rho} \|f\|_{L^2(\Omega)}.
\end{equation}
On the other hand, if  $n/2+1 \le \ell $, then
\begin{equation}\label{Q12}
\|\partial_j Q_{1}^\ell g\|_{L^2(\Omega)} \le  C(n,\ell) \frac{R}{\rho}\left[1+  \left(\log \left(\frac{|\Omega|}{|B|} \right) \right)^{\frac{n}{2(\ell-1)}} \left(\frac{|\Omega|}{|B|} \right)^{\frac{2(\ell-1)-n}{2(\ell-1)}} \right] \|f\|_{L^2(\Omega)}. 
\end{equation}
\end{lemma}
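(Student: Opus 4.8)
Here is how I would organize the argument; it runs parallel to the proof of the bound on $\partial_j Q_2^\ell$, with one genuinely new ingredient.

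First I would record that $\partial_j Q_1^\ell g = Q_\partial^\ell g = Q_\partial^{\ell,L}g + Q_\partial^{\ell,U}g$ with the choice $\phi = \phi_1 = \theta$ (the limit in $\eps$ is harmless by the Remark following Theorem~\ref{thm:Costabel}). Bounding the low part by \eqref{QL} and the high part by Corollary~\ref{corAf}, both specialized to $\phi=\theta$, gives, when $n/2+1>\ell$,
\[
  \| \partial_j Q_1^\ell g \|_{L^2(\Omega)} \le \left( c_{n+1-2\ell}\, \mathsf{C}(\theta,\rho) + \frac{2^{\ell-n/2}}{n/2-\ell+1} \| \partial_j \theta \|_{L^1(\R^n)} \right) \| g \|_{L^2(\Omega)},
\]
and, for any $p< n/(\ell-1)$ when $n/2+1\le\ell$,
\[
  \| \partial_j Q_1^\ell g \|_{L^2(\Omega)} \le \left( c_{n+1-2\ell}\, \mathsf{C}(\theta,\rho) + \left( \frac{2^{\ell-n/p}}{n/p-\ell+1} \| \partial_j \theta \|_{L^1(\R^n)} \right)^{p/2} \left( 2^\ell |\Omega|\, \| \partial_j \theta \|_{L^\infty(\R^n)} \right)^{1-p/2} \right) \| g \|_{L^2(\Omega)},
\]
where $c_{n+1-2\ell}$ depends only on $n$ and $\ell$.

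Next I would insert the bounds for $\theta$: from \eqref{eq:boundCphi1} and \eqref{eq:PropertiesTheta} one has $\mathsf{C}(\theta,\rho)\le C(n)/\rho$, $\|\partial_j\theta\|_{L^1(\R^n)}\le C(n)/\rho$, $\|\partial_j\theta\|_{L^\infty(\R^n)}\le C(n)/\rho^{n+1}$, and $|B|\ge C(n)\rho^n$. Substituting, the right-hand side in the first case is $\le \tfrac{C(n,\ell)}{\rho}\|g\|_{L^2(\Omega)}$, and in the second it is $\le \tfrac{C(n,\ell)}{\rho}\big( 1 + \big(\tfrac{2^{\ell-n/p}}{n/p-\ell+1}\big)^{p/2}\big( 2^\ell |\Omega|/|B|\big)^{1-p/2}\big)\|g\|_{L^2(\Omega)}$. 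The bracket here is exactly the one that appears in the proof of the bound on $\partial_j Q_2^\ell$; the only difference is the single prefactor $1/\rho$, which is precisely the reason $Q_1$ is treated separately, since for $\phi_2(z)=z_m\theta(z)$ the extra factor $z_m$, of size $O(\rho)$ on $\supp\theta$, absorbs that $1/\rho$. I would therefore reuse the optimization already carried out there: write $1-p/2 = \tfrac12\big(\tfrac{n}{\ell-1}-p\big) + \big(1-\tfrac{n}{2(\ell-1)}\big)$, choose $p=\tfrac{n}{\ell-1}-\epsilon$ with $\epsilon = 2/\log(|\Omega|/|B|)$, and conclude that the bracket is bounded by $C(n,\ell)\big[1 + (\log(|\Omega|/|B|))^{\frac{n}{2(\ell-1)}}(|\Omega|/|B|)^{\frac{2(\ell-1)-n}{2(\ell-1)}}\big]$.

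Finally, to replace $\|g\|_{L^2(\Omega)}$ by $\|f\|_{L^2(\Omega)}$, I would observe that after a harmless translation we may assume that $B$, hence $\Omega$, contains the origin; then $|y| = |y-0|\le\diam(\Omega)=R$ for every $y\in\bar\Omega\supset\supp f$, so that $\|g\|_{L^2(\Omega)} = \big\||y|f\big\|_{L^2(\Omega)}\le R\,\|f\|_{L^2(\Omega)}$. Combining this with the two displays above yields \eqref{Q11} and \eqref{Q12}. The only genuinely delicate point is this last step: one must verify that the normalization $0\in B$ is legitimate and leaves both the operators and the hypothesis $\supp\theta\subset B\subset\Omega$ intact; everything else is a line-by-line transcription of the $\partial_j Q_2^\ell$ argument, the net effect being an extra factor $R/\rho$ relative to \eqref{Q21}--\eqref{Q22}.
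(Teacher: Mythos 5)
Your proposal is correct and follows the paper's proof essentially line for line: split $\partial_j Q_1^\ell g$ into $Q_\partial^{\ell,L}g + Q_\partial^{\ell,U}g$ with $\phi=\phi_1=\theta$, apply \eqref{QL} and Corollary~\ref{corAf}, insert the $\theta$-scalings $\mathsf{C}(\theta,\rho)\le C(n)/\rho$, $\|\partial_j\theta\|_{L^1}\le C(n)/\rho$, $\|\partial_j\theta\|_{L^\infty}\le C(n)/(\rho|B|)$, redo the optimization over $p$ exactly as in the $\partial_j Q_2^\ell$ bound, and finish with $\|g\|_{L^2(\Omega)}\le R\|f\|_{L^2(\Omega)}$. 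The only place you go beyond the paper is in justifying the last inequality by translating so that $0\in B$, a point the paper states without comment; that clarification is welcome but does not change the argument.
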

\begin{proof}
First assume that  $n/2+1 > \ell $  and using \eqref{QL} and \eqref{Af2-1} we obtain
\begin{equation*}
\|\partial_j Q_{1}^\ell g\|_{L^2(\Omega)} \le   \left(c_{n+1-2\ell} \mathsf{C}(\phi_1,\rho )+ \frac{2^{\ell-n/2}}{n/2-\ell+1}     \|\partial_j \phi_1\|_{L^1(\Omega)}   \right)  \|g\|_{L^2(\Omega)}
\end{equation*}
Then we can easily show that   $\mathsf{C}(\phi_1,\rho )+ \|\partial_j \phi_1\|_{L^1(\Omega)}  \le \frac{C(n)}{\rho}$ and since $f$ is supported in $\Omega$ we have $\|g\|_{L^2(\Omega)} \le R \|f\|_{L^2(\Omega)}$. This proves \eqref{Q11}. 

Next, assume that $n/2+1 \le \ell $ and using \eqref{QL} and \eqref{Af2} we get
\begin{equation*}
\|\partial_j Q_{1}^\ell g\|_{L^2(\Omega)} \le   \frac{C(n,\ell)}{\rho} \left[ 1+ \left(\frac{2^{\ell-n/p}}{n/p-\ell+1} \right)^{p/2}  \left(2^{\ell} \frac{|\Omega|}{|B|} \right)^{1-p/2}  \right]  \|g\|_{L^2(\Omega)}.
\end{equation*}
Here we also used that  $\|\partial_j\phi_1\|_{L^\infty(\Omega)} \le \frac{C(n)}{\rho |B|}$,  $\|\partial_j\phi_1\|_{L^\infty(\Omega)} \le \frac{C(n)}{\rho}$ and that $\mathsf{C}(\phi_1,\rho ) \le \frac{C(n)}{\rho}$. Proceeding as we did in the proof \eqref{Q22}, and using that  $\|g\|_{L^2(\Omega)} \le R \|f\|_{L^2(\Omega)}$, proves \eqref{Q12}. 
\end{proof}

We are ready to prove the main estimate regarding the components that comprise the Bogovski\u{\i}--type operator $\Bog$.

\begin{theorem}[bound on $Q_\ell$]
\label{thm:boudnQ}
Let $\Omega$ be a bounded domain that is star shaped with respect to a ball $B$. Set $R=\diam(\Omega)$, and $\rho = \diam(B)$. Then, for $\ell \in \{1, \ldots, n \}$, the operator $Q_\ell$, defined in \eqref{eq:BogReduced}, satisfies
\[
  | Q_\ell f |_{H^1(\Omega)} \leq C(n, \ell)  \frac{R}\rho \kappa \| f \|_{L^2(\Omega)},
\]
where $C(n,\ell)$ is a constant that only depends on $n$ and $\ell$, and $\kappa = \kappa( \Omega, B, R, \rho)$ is such that,
\begin{enumerate}[1.]
  \item If $\ell < n/2+1$, then
  \[
    \kappa = 1.
  \]
  
  \item If  $\ell \ge n/2+1 $, then
  \[
    \kappa = 1+  \left(\log \left(\frac{|\Omega|}{|B|} \right) \right)^{\frac{n}{2(\ell-1)}} \left(\frac{|\Omega|}{|B|} \right)^{\frac{2(\ell-1)-n}{2(\ell-1)}} .
  \]
\end{enumerate}
\end{theorem}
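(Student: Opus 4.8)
The plan is to proceed exactly as in the proof of Theorem~\ref{thm:boudnP}, using the two lemmas we have just established as black boxes. Starting from the decomposition \eqref{eq:derivBogovskii}, for any $j \in \{1, \ldots, n\}$ and with $g(y) = y f(y)$ we have
\[
  \| \partial_j Q_\ell f \|_{L^2(\Omega)} \leq \| \partial_j Q_1^\ell g \|_{L^2(\Omega)} + \| \partial_j Q_2^\ell f \|_{L^2(\Omega)}.
\]
I would then split into the two regimes of the statement: when $\ell < n/2 + 1$ I would apply \eqref{Q11} to the first term and \eqref{Q21} to the second, while when $\ell \geq n/2 + 1$ I would apply \eqref{Q12} and \eqref{Q22} instead.

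Next I would combine the two contributions. Because $B \subset \Omega$ we have $\rho = \diam(B) \leq \diam(\Omega) = R$, hence $R/\rho \geq 1$, so the bound on $\| \partial_j Q_2^\ell f \|_{L^2(\Omega)}$ --- which carries no factor of $R/\rho$ --- is dominated by the bound on $\| \partial_j Q_1^\ell g \|_{L^2(\Omega)}$. Squaring, summing over $j \in \{1,\ldots,n\}$ (which only affects the dimensional constant), and taking square roots then yields
\[
  | Q_\ell f |_{H^1(\Omega)} \leq C(n,\ell)\, \frac{R}{\rho}\, \kappa\, \| f \|_{L^2(\Omega)},
\]
with $\kappa = 1$ when $\ell < n/2+1$ and $\kappa = 1 + \left(\log\frac{|\Omega|}{|B|}\right)^{\frac{n}{2(\ell-1)}} \left(\frac{|\Omega|}{|B|}\right)^{\frac{2(\ell-1)-n}{2(\ell-1)}}$ when $\ell \geq n/2+1$, which is the assertion.

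At the level of this theorem the work is purely bookkeeping; all the substance sits upstream, in the lemmas bounding $\partial_j Q_i^\ell$ and, below them, in Corollary~\ref{corAf}. The main obstacle --- and the real content of this section --- is the regime $\ell \geq n/2+1$, where the Fourier-side $L^2$ estimate for $Q_\partial^{\ell,U}$ in Lemma~\ref{Aflemma} fails at the endpoint $p = n/(\ell-1)$. One must therefore interpolate, via Riesz--Thorin, between the $L^p$ bound \eqref{Afp} for $p < n/(\ell-1)$ and the $L^\infty$ bound \eqref{Afinfty}, and then optimize the free parameter by taking $p = n/(\ell-1) - \epsilon$ with $\epsilon = 2/\log(|\Omega|/|B|)$, so as to balance the blow-up of the factor $(n/p - \ell + 1)^{-p/2}$ against that of $(|\Omega|/|B|)^{1-p/2}$. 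Carrying this optimization through, as in the proof of \eqref{Q22}, is exactly what produces the logarithmic factor and the exponent $\frac{2(\ell-1)-n}{2(\ell-1)}$ in $\kappa$; once it is done, the present theorem is immediate.
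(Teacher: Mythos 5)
Your proposal is correct and follows essentially the same route as the paper: the paper's proof of Theorem~\ref{thm:boudnQ} is likewise a one-line gathering of the estimates \eqref{Q11}, \eqref{Q21}, \eqref{Q12}, \eqref{Q22} via the decomposition \eqref{eq:derivBogovskii}, with the triangle inequality and $R/\rho \geq 1$ doing the bookkeeping. Your observation that the substantive work sits in Lemma~\ref{Aflemma}, Corollary~\ref{corAf}, and the Riesz--Thorin optimization with $p = n/(\ell-1) - 2/\log(|\Omega|/|B|)$ is accurate as well.
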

\begin{proof}
It suffices to gather the previously obtained estimates for and $\partial_j Q_{2}^\ell f$ and $\partial_j Q_{1}^\ell g$.
\end{proof}

As a consequence we obtain the second main result of this work. An estimate on the continuity constant for the Bogovski\u{\i}--type operators $\Bog$.

\begin{corollary}[estimate on $C_{\Bog,1}$]
\label{cor:BogEstimate}
Let $\Omega$ be a bounded domain that is star shaped with respect to a ball $B$. Set $R=\diam(\Omega)$, and $\rho = \diam(B)$. Then, for $\ell \in \{1, \ldots, n \}$, the operator $\Bog$, defined in \eqref{eq:defBogovskii}, satisfies
\[
  | \Bog u |_{H^1(\Omega,\Lambda^{\ell-1})} \leq C(n, \ell)  \frac{R}\rho \kappa \| u \|_{L^2(\Omega, \Lambda^\ell)},
\]
where $C(n,\ell)$ is a constant that only depends on $n$ and $\ell$ and $\kappa = \kappa( \Omega, B, R, \rho)$ is as in Theorem~\ref{thm:boudnQ}.
\end{corollary}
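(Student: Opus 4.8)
The plan is to deduce the bound directly from the scalar estimate of Theorem~\ref{thm:boudnQ}, exactly as Corollary~\ref{cor:PoincareEstimate} is deduced from Theorem~\ref{thm:boudnP}. First I would recall the componentwise representation established at the beginning of Section~\ref{sec:Bogovskii}: writing $u = \sum_I u_I \diff x_I$ with $u_I \in L^2(\Omega)$ extended by zero to $\R^n$, one has
\[
  \Bog u = \sum_I \sum_{m=1}^\ell (-1)^{m-1} \bigl( Q_\ell u_I \bigr)\, \diff x_{\hat I_m},
\]
where $Q_\ell$ is the scalar operator \eqref{eq:BogReduced} built from the coordinate $x_{i_m}$ associated with the suppressed index; since the bound of Theorem~\ref{thm:boudnQ} does not depend on the distinguished coordinate, it applies uniformly to each of these. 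All such manipulations are legitimate by the remark following Theorem~\ref{thm:Costabel}.

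Next, I would regroup the right-hand side according to the ordered $(\ell-1)$--tuple $J$ indexing the basis form $\diff x_J$. For each fixed $J$ the pairs $(I,m)$ with $\hat I_m = J$ arise by inserting one of the $n-\ell+1$ indices absent from $J$ into $J$, so the coefficient $(\Bog u)_J$ is a signed sum of at most $n-\ell+1$ terms of the form $Q_\ell u_I$. Since, by the definition of the seminorm on forms,
\[
  | \Bog u |_{H^1(\Omega,\Lambda^{\ell-1})}^2 = \sum_J \bigl| (\Bog u)_J \bigr|_{H^1(\Omega)}^2,
\]
the triangle inequality in $H^1(\Omega)$ together with the elementary bound $\bigl(\sum_{k=1}^N a_k\bigr)^2 \le N \sum_{k=1}^N a_k^2$ reduces matters to estimating $| Q_\ell u_I |_{H^1(\Omega)}$ for each individual $I$, the finite combinatorial loss being absorbed into $C(n,\ell)$.

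Applying Theorem~\ref{thm:boudnQ} gives $| Q_\ell u_I |_{H^1(\Omega)} \le C(n,\ell)\tfrac{R}{\rho}\kappa\,\|u_I\|_{L^2(\Omega)}$ with $\kappa$ precisely as in the statement; summing the squares over all $I$ and invoking $\|u\|_{L^2(\Omega,\Lambda^\ell)}^2 = \sum_I \|u_I\|_{L^2(\Omega)}^2$ yields the assertion. I do not anticipate any genuine obstacle: the entire analytic content has already been spent on Theorem~\ref{thm:boudnQ} and, through it, on the Fourier-analytic lemmas and the Riesz--Thorin interpolation of Section~\ref{sec:Bogovskii}. The only mild point is the bookkeeping caused by distinct ordered tuples $I$ producing the same $\hat I_m$, and this is dispatched by the crude finite-sum inequality above.
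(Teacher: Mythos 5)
Your argument is correct and is essentially the paper's (the paper leaves the proof implicit, exactly as the one-line proof of Corollary~\ref{cor:PoincareEstimate} applies Theorem~\ref{thm:boudnP} componentwise). You have merely made explicit the finite combinatorial regrouping over $J = \hat I_m$, which the paper absorbs silently into $C(n,\ell)$.
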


\section{A chain of star shaped domains}
\label{sec:LipschitzDomains}

Let us now extend the technique to estimate the constants in our operators to more general domains. To do so we will follow some ideas presented in \cite{MR3198867} to decompose domains but in a much simpler setting. Our techniques for the case of no boundary conditions were also inspired by the proof of the so--called Mayer--Vietoris theorem as presented in \cite[Theorem 15.9]{MR1930091}.

Let us begin by presenting the class of domains $\Omega$ to which our results shall apply. Essentially, we will deal with a chain of domains over which the estimate can be extended. We will assume that $\Omega$ is a contractible domain, such that there is $N \in \polN$ for which
\[
  \Omega = \bigcup_{i=1}^N \Omega_i
\]
where:
\begin{enumerate}[$\bullet$]
  \item For each $i \in \{1, \ldots, N\}$ the domain $\Omega_i$ is star shaped with respect to a ball $B_i \subset \Omega_i$.
  
  \item For every $i,j \in \{ 1, \ldots, N\}$ with $|i-j|>1$ we have $\Omega_i \cap \Omega_j = \emptyset$.
  
  \item For $i \in \{1, \ldots, N-1\}$ let $\Omega_{i+1/2} = \Omega_i \cap \Omega_{i+1} \neq \emptyset$. Then $\Omega_{i+1/2}$ is star shaped with respect to a ball $B_{i+1/2} \subset \Omega_{i+1/2}$.
  
  \item We have a partition of unity subject to this decomposition. In other words, there are $\{\phi_i\}_{i=1}^N  \subset C^\infty(\overline{\Omega})$, such that $0 \le \phi_i \le 1$, $\phi_i =0$ in $\Omega \setminus \Omega_i$ and $\sum_{i=1}^N \phi_i=1$ in $\overline{\Omega}$.
   
 \item Finally, we impose a restriction on the way the sets can intersect, in the sense that for $i \in \{1, \ldots, N\}$ and any multiindex $\alpha \in \polN_0^n$
  \[
    \| \partial^\alpha \phi_i \|_{L^\infty(\Omega)} \leq \frac{C_{\alpha,i}}{d_i^{|\alpha|}},
  \]
  where $d_i = \min \{\diam (\Omega_{i-1/2}), \diam (\Omega_{i+1/2}) \}$. We comment that this last assumption is common in the domain decomposition literature; see \cite[Assumptions 3.1, 3.2]{toselli2006domain}.
\end{enumerate}
Notice that the conditions of our decomposition guarantee that, for every  $x \in \Omega$,
\[
  1 \leq \# \{ i : x \in \Omega_i \} \leq 2.
\]

The estimates of Corollary~\ref{cor:PoincareEstimate} and \ref{cor:BogEstimate} depend on the geometric characteristics of the domain. Specifically on the ratio of the diameter of the domain and the ball, and the ratio of their measures. Let us denote by $C_D$ the constant in these estimates for a domain $D$. Then we set
\[
  \polT = \left\{1, \ldots, N \right\} \cup \left\{ \frac32, \frac52, \ldots, N - \frac12 \right\},
\]
and
\begin{align}
\label{eq:defofCtree}
  C_\polT &= \max\left\{ C_{\Omega_t} : t \in \polT \right\}, \\
\label{eq:defofBigDtree}
  D_\polT &= \max\left\{ \diam(\Omega_t) : t \in \polT \right\}, \\
\label{eq:defofLittledtree}
  d_\polT &= \min\left\{ \diam(\Omega_t) : t \in \polT \right\}, \\
\label{eq:defofCSeparation}
  C_S &= \max\left\{ C_{\alpha,i} : |\alpha| \leq 2, i \in \{1, \ldots, N-1\} \right\}.
\end{align}

We also need to recall Poincar\'e's inequality as stated, for example, in \cite[equation (7.44)]{gilbarg2015elliptic}.

\begin{lemma}[Poincar\'e inequality I]
\label{lem:PoincI}
Let $t \in \polT$, $\ell \in \{0, \ldots, n\}$, $v \in H_0^1(\Omega_t,\Lambda^{\ell})$. Then we have that 
\begin{equation*}
\|v\|_{L^2(\Omega_t)} \le \mathsf{C}(n) \diam(\Omega_t)  |v|_{H^1(\Omega_t,\Lambda^{\ell})},
\end{equation*}
 where the constant $\mathsf{C}(n)$ only depends on $n$. 
\end{lemma}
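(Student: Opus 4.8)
The plan is to reduce the form-valued statement to the classical scalar Poincar\'e--Friedrichs inequality applied component by component. First I would write $v = \sum_I v_I \diff x_I$, where the sum runs over ordered $\ell$--tuples $I$ as in Section~\ref{sub:DiffForms}, and observe that each coefficient satisfies $v_I \in H^1_0(\Omega_t)$. This is immediate from the definition of $H^1_0(\Omega_t,\Lambda^{\ell})$ as the closure of $C^\infty_0(\Omega_t,\Lambda^{\ell})$: passing to a fixed coordinate system sends a smooth compactly supported form to a tuple of smooth compactly supported scalar functions, it commutes with taking closures, and the norm on forms is the Euclidean sum of the component norms, so convergence in $H^1(\Omega_t,\Lambda^{\ell})$ is equivalent to convergence of each component in $H^1(\Omega_t)$.

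Next I would invoke the scalar inequality in the form recalled in \cite[equation (7.44)]{gilbarg2015elliptic}: for $w \in H^1_0(\Omega_t)$ one has $\|w\|_{L^2(\Omega_t)} \le C(n)\,\diam(\Omega_t)\,|w|_{H^1(\Omega_t)}$, with $C(n)$ depending only on the dimension $n$. The underlying argument --- enclose $\Omega_t$ in a slab of width $\diam(\Omega_t)$, extend $w$ by zero to the slab, and integrate $|w(x)|^2$ along the short coordinate using the fundamental theorem of calculus together with Cauchy--Schwarz --- produces a constant that is genuinely dimension-only, so it can be quoted verbatim.

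Finally I would square the scalar estimate applied to $w = v_I$ and sum over $I$, using the definitions of the $L^2$--norm and the $H^1$--seminorm on differential forms:
\[
  \|v\|_{L^2(\Omega_t,\Lambda^{\ell})}^2 = \sum_I \|v_I\|_{L^2(\Omega_t)}^2 \le C(n)^2\,\diam(\Omega_t)^2 \sum_I |v_I|_{H^1(\Omega_t)}^2 = C(n)^2\,\diam(\Omega_t)^2\,|v|_{H^1(\Omega_t,\Lambda^{\ell})}^2 .
\]
Taking square roots yields the claim with $\mathsf{C}(n) = C(n)$. There is essentially no obstacle in this argument; the only point that deserves an explicit word is the density/coordinatization step guaranteeing $v_I \in H^1_0(\Omega_t)$, and even that is routine. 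The lemma is isolated here only so that it can be cited cleanly when the estimates of Corollaries~\ref{cor:PoincareEstimate} and \ref{cor:BogEstimate} are patched together over the chain of domains in Section~\ref{sec:LipschitzDomains}.
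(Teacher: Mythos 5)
Your proposal is correct and matches the paper's treatment: the paper offers no proof of this lemma at all, simply recalling the scalar Poincar\'e--Friedrichs inequality from \cite[equation (7.44)]{gilbarg2015elliptic}, and the intended justification is exactly your componentwise reduction using the definitions of $\|\cdot\|_{L^2(\Omega_t,\Lambda^\ell)}$ and $|\cdot|_{H^1(\Omega_t,\Lambda^\ell)}$ as sums over the coefficients $v_I$. Nothing further is needed.
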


The next result is well known, and is also sometimes referred to as Poincar\'e's inequality. Of importance to us here is an estimate on the value of the constant. This result, in the language of vector fields, was presented in \cite[Section 5]{MR3086804}. For completeness, we provide a proof.

\begin{lemma}[Poincar\'e inequality II]
Let $t \in \polT$, and $u \in H^1(\Omega_t,\Lambda^{0})$ be such that $\int_{\Omega_t} \star u =0$. Then, there is a constant $K_{P_t}$ such that
\begin{equation*}
\|u\|_{L^2(\Omega_t,\Lambda^0)} \le K_{P_t} \diam(\Omega_t)  |u|_{H^1(\Omega_t,\Lambda^0)}.
\end{equation*}
Moreover, the constant $K_{P_t}$ can be bounded by
\[
  K_{P_t} \leq C \frac{R_t}{\rho_t}\left[1+  \left(\log \left(\frac{|\Omega_t|}{|B_t|} \right) \right)^{\frac{n}{2(n-1)}} \left(\frac{|\Omega_t|}{|B_t|} \right)^{\frac{n-2}{2(n-1)}} \right],
\]
where the constant $C$ depends only on the dimension $n$, $R_t = \diam (\Omega_t)$ and $\rho_t=\diam(B_t)$.
\end{lemma}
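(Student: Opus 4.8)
The plan is to prove this ``mean--zero'' Poincar\'e inequality by a duality argument in the spirit of \cite[Section 5]{MR3086804}, using the Bogovski\u{\i}--type operator $\Bog$ of Theorem~\ref{thm:Costabel} in top degree $\ell=n$ as an explicit right inverse of $\diff$ on $n$--forms with vanishing integral. The Hodge star $\star$ is what lets us shuttle between the $0$--form $u$ and an $n$--form, and between an $(n-1)$--form (a ``vector field'') and a $1$--form. Throughout, write $R_t=\diam(\Omega_t)$, $\rho_t=\diam(B_t)$, and let $\Bog$ be built from a $\theta\in C_0^\infty(B_t)$ satisfying \eqref{eq:PropertiesTheta} relative to $B_t$ (this is always arranged by rescaling $\theta$ to the ball at hand).

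First I would set $w=\star u\in H^1(\Omega_t,\Lambda^n)\subset L^2(\Omega_t,\Lambda^n)$. As $w$ is an $n$--form, $\diff w=0$ trivially, and by hypothesis $\int_{\Omega_t}w=\int_{\Omega_t}\star u=0$, so Theorem~\ref{thm:Costabel}(1) (with $k=0$, $\ell=n$) gives $\eta:=\Bog w\in H_0^1(\Omega_t,\Lambda^{n-1})$ with $\diff\eta=w=\star u$, and Corollary~\ref{cor:BogEstimate} provides the explicit bound
\[
  |\eta|_{H^1(\Omega_t,\Lambda^{n-1})}\le C(n)\,\frac{R_t}{\rho_t}\,\kappa_t\,\|u\|_{L^2(\Omega_t,\Lambda^0)},
\]
where $\kappa_t$ is the value of $\kappa$ from Theorem~\ref{thm:boudnQ} in the case $\ell=n$, namely $\kappa_t=1+(\log(|\Omega_t|/|B_t|))^{n/(2(n-1))}(|\Omega_t|/|B_t|)^{(n-2)/(2(n-1))}$ for $n\ge 2$, and $\kappa_t=1$ for $n=1$. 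Since $\eta\in H_0^1(\Omega_t,\Lambda^{n-1})$, Lemma~\ref{lem:PoincI} then yields $\|\eta\|_{L^2(\Omega_t,\Lambda^{n-1})}\le\mathsf{C}(n)\,R_t\,|\eta|_{H^1(\Omega_t,\Lambda^{n-1})}$.

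Next comes the duality step. Because $u$ is a $0$--form and $\diff\eta=\star u$, pointwise $\diff\eta\wedge u=u\wedge\star u=|u|^2\,\be^1\wedge\cdots\wedge\be^n$, so $\int_{\Omega_t}\diff\eta\wedge u=\|u\|_{L^2(\Omega_t,\Lambda^0)}^2$. On the other hand the weak trace of $\eta$ vanishes: the functional $v\mapsto\int_{\Omega_t}\diff\eta\wedge v+(-1)^{n-1}\int_{\Omega_t}\eta\wedge\diff v$ on $H^1(\Omega_t,\Lambda^0)$ is continuous in $\eta$ for the $H^1$--norm and vanishes for $\eta\in C_0^\infty(\Omega_t,\Lambda^{n-1})$, hence for all $\eta\in H_0^1(\Omega_t,\Lambda^{n-1})$. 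Evaluating at $v=u$ and using $\eta\wedge\diff u=(-1)^{n-1}\diff u\wedge\eta$ gives
\[
  \|u\|_{L^2(\Omega_t,\Lambda^0)}^2=\int_{\Omega_t}\diff\eta\wedge u=(-1)^n\int_{\Omega_t}\eta\wedge\diff u=-\int_{\Omega_t}\diff u\wedge\eta.
\]
Writing $\diff u\wedge\eta=(-1)^{n-1}\langle\diff u,\star\eta\rangle\,\be^1\wedge\cdots\wedge\be^n$ and using that $\star$ is a pointwise isometry, so $|\langle\diff u,\star\eta\rangle|\le|\diff u|\,|\eta|$ a.e., the Cauchy--Schwarz inequality yields $\|u\|_{L^2(\Omega_t,\Lambda^0)}^2\le|u|_{H^1(\Omega_t,\Lambda^0)}\,\|\eta\|_{L^2(\Omega_t,\Lambda^{n-1})}$. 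Chaining this with the bound from Lemma~\ref{lem:PoincI} and then with Corollary~\ref{cor:BogEstimate}, and dividing by $\|u\|_{L^2(\Omega_t,\Lambda^0)}$ (the inequality being trivial when $u\equiv0$), gives $\|u\|_{L^2(\Omega_t,\Lambda^0)}\le C(n)\frac{R_t}{\rho_t}\kappa_t\,R_t\,|u|_{H^1(\Omega_t,\Lambda^0)}$, which is exactly the claim with $K_{P_t}=C(n)\frac{R_t}{\rho_t}\kappa_t$ and $\diam(\Omega_t)=R_t$.

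There is no genuine obstacle here: the heavy lifting has already been done in Corollary~\ref{cor:BogEstimate}, and it is precisely the $\ell=n$ instance of that corollary that injects the geometric factor $\kappa_t$ into the Poincar\'e constant. The only points requiring care are routine: checking that the hypotheses of Theorem~\ref{thm:Costabel}(1) hold for $w=\star u$ (the triviality of $\diff(\star u)$ and the condition $\int_{\Omega_t}\star u=0$), the Hodge--star sign bookkeeping in the integration by parts, the density argument showing the weak trace of an $H^1_0$ form vanishes, and the degeneration of the exponents when $n=1$, where one is in the first case of Theorem~\ref{thm:boudnQ} and the bound reads simply $K_{P_t}\le C\,R_t/\rho_t$.
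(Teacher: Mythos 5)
Your proposal is correct and follows essentially the same route as the paper's proof: construct $\eta=\Bog(\star u)\in H^1_0(\Omega_t,\Lambda^{n-1})$ via Theorem~\ref{thm:Costabel} and Corollary~\ref{cor:BogEstimate} with $\ell=n$, integrate by parts using the vanishing trace, apply Cauchy--Schwarz, and finish with Lemma~\ref{lem:PoincI}. Your treatment is a bit more explicit about the sign bookkeeping and the density argument justifying the vanishing weak trace, but these are refinements of the same argument rather than a different approach.
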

\begin{proof}
Since $\star u \in L^2(\Omega_t, \Lambda^n)$ has zero average, we can deduce from Corollary~\ref{cor:BogEstimate} (with $\ell=n$) the existence of $v \in H^1_0(\Omega,\Lambda^{n-1})$ such that $\diff v = \star u$ and, since $\star$ is an isometry,
\begin{equation}
\label{eq:Poincarezeroavgforms}
  | v |_{H^1(\Omega_t, \Lambda^{n-1})} \leq C \frac{R_t}{\rho_t} \left[1+  \left(\log \left(\frac{|\Omega_t|}{|B_t|} \right) \right)^{\frac{n}{2(n-1)}} \left(\frac{|\Omega_t|}{|B_t|} \right)^{\frac{n-2}{2(n-1)}} \right] \| u \|_{L^2(\Omega_t,\Lambda^0)},
\end{equation}
with a constant $C$ that depends only on the dimension. Now,
\begin{align*}
  \| u \|_{L^2(\Omega_t,\Lambda^0)}^2 &= \int_{\Omega_t} u \wedge \star u = \left| \int_{\Omega_t} u \wedge \diff v \right|= \left| \int_{\Omega_t} \diff u \wedge  v \right| \\ &\leq C | u |_{H^1(\Omega_t,\Lambda^0)} \| v \|_{L^2(\Omega_t,\Lambda^{n-1})},
\end{align*}
where the constant $C$ depends only on the dimension. Since $v \in H^1_0(\Omega,\Lambda^{n-1})$, using Lemma~\ref{lem:PoincI} and estimate \eqref{eq:Poincarezeroavgforms} the result follows.
\end{proof}

Having realized that the constants $K_{P_i}$ can be bound, once again, by geometric characteristcs of our domains, we set
\begin{equation}
\label{eq:valueofCP}
  C_P=\max\left\{  K_{P_t} : t \in \polT \right\}.
\end{equation}

\subsection{Using the Poincar\'e operator}
Our result about estimating the continuity constant for more general domains then reads as follows. Interestingly, value of the constant is independent of $N$. This, of course, provided the value of the constants defined in \eqref{eq:defofCtree}---\eqref{eq:defofCSeparation} and \eqref{eq:valueofCP} is also independent of $N$.

\begin{theorem}[estimate on a chain: without boundary conditions]
Let $\Omega$ satisfy all the previously stated conditions, $\ell \in \{1, \ldots, n\}$ and $u \in L^2(\Omega, \Lambda^\ell)$ be such that $\diff u = 0$. Then, there is $v \in H^1(\Omega, \Lambda^{\ell-1})$ such that $\diff v = u$ and, moreover,
\[
  |v|_{H^1(\Omega,\Lambda^{\ell-1})} \leq C(C_\polT, D_\polT, d_\polT, C_S ) \| u \|_{L^2(\Omega,\Lambda^\ell)}.
\]
An upper bound for the constant in this estimate is given by
\[
  C(C_\polT, D_\polT, d_\polT, C_S ) \leq 2C_\polT \begin{dcases}
                                                     \sqrt{ 1 + 32 C_S^2 \left( \frac{ C_\polT C_P D_\polT}{d_\polT} + 1 \right)^4 }, & \ell \geq 2, \\
                                                    1, & \ell = 1.
                                                   \end{dcases}
\]
\end{theorem}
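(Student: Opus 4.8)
The plan is to build $v$ by the classical partition-of-unity induction on the chain, peeling off one star-shaped piece at a time. Write $\psi_k = \sum_{i=1}^k \phi_i$, so $\psi_N \equiv 1$ and $\psi_k$ is supported in $\Omega_1 \cup \dots \cup \Omega_k$ but equals $1$ outside $\Omega_k$ (because the only $\Omega_i$ meeting $\Omega_{k+1}, \dots$ with $i \le k$ is $\Omega_k$ itself). The key observation is that $\diff(\psi_k u)$ is supported in the overlap region and, since $\diff u = 0$, equals $\diff\psi_k \wedge u$. I would argue inductively that for each $k$ one can write $\psi_k u = u - r_k$ where $r_k = (1-\psi_k)u$ is supported in $\Omega_{k+1}\cup\dots\cup\Omega_N$ and is closed; applying $\Bog$-type/Poincar\'e-type reasoning on the single overlap $\Omega_{k+1/2}$ lets one correct the discrepancy. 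Concretely, at step $k$ we have produced $v_k \in H^1$ with $\diff v_k = \psi_k u$ and a controlled $H^1$ bound; then $\diff(v_k) - u = -(1-\psi_k)u$ is supported in $\Omega_{k+1}\cup\cdots$, and on $\Omega_{k+1}$ we use the local Poincar\'e-type operator $\mathtt{P}_\ell$ (Corollary~\ref{cor:PoincareEstimate}) applied to $\phi_{k+1}u$, after subtracting an exact form so that the piece to be inverted is genuinely closed on $\Omega_{k+1}$; the exactness correction on the overlap $\Omega_{k+1/2}$ is where the Poincar\'e inequality II and the constant $C_P$ enter.

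The precise bookkeeping I would follow: set $v_1 = \mathtt{P}_\ell^{(\Omega_1)}(\phi_1 u)$; having $v_k$, note $w_{k+1} := \phi_{k+1}(u - \diff v_k)$ is supported in $\Omega_{k+1}$, and is \emph{closed} on $\Omega_{k+1}$ only up to the term $\diff\phi_{k+1}\wedge(u-\diff v_k)$, which is supported in the overlap $\Omega_{k+1/2}$ and is itself exact there (being the exterior derivative restricted to a contractible piece, with total "mass" zero when $\ell-1 = 0$ — this is exactly the case $\ell \ge 2$ versus $\ell = 1$ dichotomy, since for $\ell = 1$ the forms are functions and one must kill the average). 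So I would first solve, on $\Omega_{k+1/2}$, the local problem $\diff z_{k+1} = \diff\phi_{k+1}\wedge(u-\diff v_k)$ with $z_{k+1}\in H^1(\Omega_{k+1/2},\Lambda^{\ell-2})$ via $\mathtt{P}$ on the overlap (for $\ell \ge 2$; the estimate picks up a factor $D_\polT/d_\polT$ from $\diff\phi_{k+1}$ and a factor $C_\polT$), extend $z_{k+1}$ by a cutoff, and then set $v_{k+1} = v_k + \mathtt{P}_\ell^{(\Omega_{k+1})}\big(\phi_{k+1}(u-\diff v_k) - \diff(\chi z_{k+1})\big)$ where $\chi$ localizes near the overlap. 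Telescoping gives $\diff v_N = \psi_N u = u$.

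For the norm estimate I would track the growth constant through the induction: each step multiplies the running bound by a factor controlled by $C_\polT$ (from the local operator), by $C_S D_\polT/d_\polT$ (from $\|\diff\phi_{k+1}\|_\infty$ and the diameter ratios), and by $C_P$ (from the Poincar\'e inequality needed to make the overlap correction well posed), and \emph{adds} a term $C_\polT\|u\|_{L^2}$. The crucial point — the reason the final constant is $N$-independent — is that at step $k$ the quantity being fed into the next local solve is $\phi_{k+1}(u - \diff v_k)$, and $u - \diff v_k = (1-\psi_k)u$ is supported only in $\Omega_{k+1}\cup\Omega_{k+2}\cup\cdots$; multiplying by $\phi_{k+1}$ localizes it to $\Omega_{k+1}$, and its $L^2$ norm is bounded by $\|u\|_{L^2(\Omega_{k+1}\cup\Omega_{k+2})}$, not by the accumulated bound on $v_k$. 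Hence the errors do not compound multiplicatively across all $N$ steps; one only ever sees a bounded product of the four structural constants, giving the stated $2C_\polT\sqrt{1 + 32 C_S^2(C_\polT C_P D_\polT/d_\polT + 1)^4}$ for $\ell \ge 2$ and simply $2C_\polT$ for $\ell = 1$ (where no overlap correction is needed because closedness of a $0$-form piece on a connected overlap already forces it to vanish up to the right constant). The main obstacle is exactly this non-compounding argument: making rigorous that the "remainder" $u - \diff v_k$ stays supported downstream and that multiplying by $\phi_{k+1}$ controls it by the \emph{local} data rather than by $\|v_k\|_{H^1}$ — this requires carefully choosing the cutoff $\chi$ in the overlap correction so that $\diff(\chi z_{k+1})$ is supported in $\Omega_{k+1}$ and does not leak into $\Omega_{k+2}$, and then verifying the support propagation $\supp(u-\diff v_{k+1})\subset \bigcup_{i\ge k+2}\Omega_i$ is preserved.
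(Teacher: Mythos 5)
Your high-level intuition is sound --- peel the chain apart with a partition of unity, correct the closedness defect on the overlaps, note the $\ell=1$ vs. $\ell\geq 2$ dichotomy, and observe that the chain structure (each point in at most two $\Omega_i$) keeps the constant independent of $N$. But the concrete construction you propose has two genuine problems that would need fixing, and the repaired argument ends up looking quite different from the paper's.

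First, a degree mismatch. You want to solve $\diff z_{k+1} = \diff\phi_{k+1}\wedge(u-\diff v_k)$ with $z_{k+1}\in\Lambda^{\ell-2}$, but the right-hand side lives in $\Lambda^{\ell+1}$ (a $1$-form wedged with an $\ell$-form), so $z_{k+1}$ would have to be an $\ell$-form, not an $(\ell-2)$-form. The $(\ell-2)$-form correction appears naturally in the \emph{other} version of this argument: one first finds local primitives $\eta_i\in\Lambda^{\ell-1}$ with $\diff\eta_i=u$ on each $\Omega_i$, observes that $\eta_i-\eta_{i+1}$ is closed on $\Omega_{i+1/2}$, and \emph{then} solves $\diff w_{i+1/2}=\eta_i-\eta_{i+1}$ with $w_{i+1/2}\in\Lambda^{\ell-2}$. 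This is the Mayer--Vietoris route and it is exactly what the paper does: it sets $v_i=\eta_i+\diff(\phi_{i-1}w_{i-1/2}-\phi_{i+1}w_{i+1/2})$ on $\Omega_i$, shows the $v_i$ agree on the overlaps, and glues. You have conflated this with the dual construction (used in the paper's Theorem~\ref{thembog} for the boundary-condition case) where one corrects $\phi_i u$ itself.

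Second, and more structurally, you invoke $\mathtt{P}_\ell^{(\Omega_{k+1})}$, a Poincar\'e-type operator, to produce the increment $v_{k+1}-v_k$, and then implicitly extend that increment by zero outside $\Omega_{k+1}$ so that the telescoping works on all of $\Omega$. The Poincar\'e-type operator does \emph{not} produce forms with vanishing trace; it is the Bogovski\u{\i}-type operator $\Bog$ that has this locality. So the extension by zero of $\mathtt{P}_\ell^{(\Omega_{k+1})}(\cdot)$ to $\Omega\setminus\Omega_{k+1}$ is not $H^1$, and $v_{k+1}$ is not well-defined globally. Relatedly, even on $\Omega_1$, $\diff\mathtt{P}_\ell^{(\Omega_1)}(\phi_1 u)\neq\phi_1 u$, because $\phi_1 u$ is not closed and the exactness statement $u=\diff\mathtt{P}_\ell u$ in Theorem~\ref{thm:Costabel} requires $\diff u=0$; one only has the homotopy identity, which leaves a nonzero remainder $\mathtt{P}_\ell(\diff\phi_1\wedge u)$. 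If you want increments with compact support in $\Omega_{k+1}$ so that the telescoping is legitimate, you must use $\Bog$ (as the paper does in the boundary-condition chain theorem), which in turn requires verifying the zero-trace and, for $\ell=n$, zero-average conditions on the input. In the present theorem --- without boundary conditions --- the paper sidesteps all of this by not telescoping at all: it builds the $v_i$ so that they genuinely coincide on overlaps and then glues, which avoids any extension-by-zero and any trace condition on the Poincar\'e operator. Your key insight about why the bound is $N$-independent (at most two $\Omega_i$ contain any point, so the sums of local norms are controlled by $2\|u\|_{L^2(\Omega)}^2$) is the same one the paper uses, but it appears in the paper as a bookkeeping step at the end, not via a "remainder stays downstream" induction.
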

\begin{proof}
Let, for the time being, $\ell \geq 2$. Since, by assumption, all the $\{\Omega_i\}_{i=1}^N$ are star shaped with respect to a ball, a combination of Theorem~\ref{thm:Costabel} and Corollary~\ref{cor:PoincareEstimate} yield the existence of $\eta_i \in H^1(\Omega_i,\Lambda^{\ell-1})$ such that, in their domain of definition $\diff \eta_i = u$, and
\[
  | \eta_i |_{H^1(\Omega_i,\Lambda^{\ell-1})} \leq C_\polT \| u \|_{L^2(\Omega_i,\Lambda^\ell)}.
\]
Notice that we can add and subtract a suitable constant to $\eta_i$ to conclude, via Poincar\'e inequality, that
\[
  \| \eta_i \|_{L^2(\Omega_i, \Lambda^{\ell-1})} \leq C_P D_\polT | \eta_i |_{H^1(\Omega_i,\Lambda^{\ell-1})} \leq C_P D_\polT C_\polT \| u \|_{L^2(\Omega_i,\Lambda^\ell)}.
\]
While this provides a solution to the problem locally, the issue at hand is that, for $i \in \{1, \ldots, N - 1 \}$, $\eta_i, \eta_{i+1}$ may not coincide on the intersection $\Omega_{i+1/2}$. Thus, we must make a local correction.

Let $i \in \{1, \ldots, N-1\}$ and notice that, 
\begin{equation}
\label{eq:diffetasizero}
  \diff (\eta_i - \eta_{i+1} ) = 0 \quad \text{ on } \Omega_{i+1/2}.
\end{equation}
Since by assumption $\Omega_{i+1/2}$ is star shaped with respect to a ball, we can apply again Theorem~\ref{thm:Costabel} and Corollary~\ref{cor:PoincareEstimate}  to find $w_{i+1/2} \in H^1(\Omega_{i+1/2}, \Lambda^{\ell-2})$ such that
\[
  \diff w_{i+1/2} = \eta_i - \eta_{i+1} \quad \text{ on } \Omega_{i+1/2}.
\]
and
\begin{align*}
  \| w_{i+1/2} \|_{L^2(\Omega_{i+1/2}, \Lambda^{\ell-2})} &\leq C_P D_\polT | w_{i+1/2} |_{H^1(\Omega_{i+1/2},\Lambda^{\ell-2})} \\
  &\leq C_P D_\polT C_\polT \| \eta_i - \eta_{i+1} \|_{L^2(\Omega_{i+1/2},\Lambda^{\ell-1})} \\
  &\leq 2C_P^2 D_\polT^2 C_\polT^2 \| u \|_{L^2(\Omega_i \cup \Omega_{i+1},\Lambda^{\ell})}.
\end{align*}
Here we used Poincar\'e's inequality twice. Set $\phi_0 \equiv 0 \equiv \phi_{N+1}$ and $w_{-1/2} \equiv 0 \equiv w_{N+1/2}$. We then define, for $1 \le i \le N$,
\begin{equation*}
v_i=\eta_i+ \diff(\phi_{i-1} w_{i-1/2}- \phi_{i+1} w_{i+1/2})  \quad \text{ in } \Omega_{i}.
\end{equation*}
We see that $\diff v_i= \diff\eta_i=u$ in $\Omega_i$. Moreover, in $\Omega_{i+1/2}$,
\begin{align*}
  v_{i+1}-v_{i}=&\big(\eta_{i+1}+\diff(\phi_{i} w_{i+1/2})\big)-\big( \eta_{i}-\diff(\phi_{i+1} w_{i+1/2})\big) \\
  = &\big(\eta_{i+1}-\eta_i)+\diff((\phi_{i}+\phi_{i+1}) w_{i+1/2})
  =  \big(\eta_{i+1}-\eta_i)+\diff(w_{i+1/2})
  =0.   
\end{align*}
Here we used that $\phi_{i+2}$ and $\phi_{i-1}$ vanish on $\Omega_{i+1/2}$ and that $\phi_i+\phi_{i+1}=1$ on $\Omega_{i+1/2}$. 
Consequently, we can define $v \in H^1(\Omega, \Lambda^{\ell-1})$ by $v|_{\Omega_i} = v_i$ for every $i$. We also have
\[
  \diff v = \diff v_i = u.
\]

It remains then to provide a bound on the seminorm of $v$. To this end, 
\begin{alignat*}{1}
  |v|_{H^1(\Omega,\Lambda^{\ell-1})}^2 \leq & \sum_{i=1}^N |v_i|_{H^1(\Omega_i,\Lambda^{\ell-1})}^2\\
   \leq & 2 \sum_{i=1}^N |\eta_i |_{H^1(\Omega_i,\Lambda^{\ell-1})}^2 + 2 \sum_{i=1}^{N} |\diff (\phi_{i-1} w_{i-1/2}-\phi_{i+1} w_{i+1/2}) |_{H^1(\Omega_{i},\Lambda^{\ell-1})}^2.
\end{alignat*}

Since every point $x\in \Omega$ belongs to at most two subsets
\begin{equation}
\label{eq:glob_estimate_eta_part}
  \sum_{i=1}^N |\eta_i |_{H^1(\Omega_i,\Lambda^{\ell-1})}^2 \leq C_\polT^2 \sum_{i=1}^N \| u \|_{L^2(\Omega_i,\Lambda^\ell)}^2 \leq 2 C_\polT^2 \| u \|_{L^2(\Omega,\Lambda^\ell)}^2.
\end{equation}

We also have
\begin{equation*}
 2 \sum_{i=1}^{N} |\diff (\phi_{i-1} w_{i-1/2}-\phi_{i+1} w_{i+1/2}) |_{H^1(\Omega_{i},\Lambda^{\ell-1})}^2 \le  8 \sum_{i=1}^{N} |\diff (\phi_{i+1} w_{i+1/2}) |_{H^1(\Omega_{i+1/2},\Lambda^{\ell-1})}^2. 
\end{equation*}

Now, on every $\Omega_{i+1/2}$,

\begin{align*}
  | \diff( \phi_{i+1} w_{i+1/2} ) |_{H^1(\Omega_{i+1/2},\Lambda^{\ell-1})} =& \left| \phi_{i+1} \diff w_{i+1/2} + \diff \phi_{i+1} \wedge w_{i+1/2}\right|_{H^1(\Omega_{i+1/2},\Lambda^{\ell-1})} \\
    \leq & | \phi_{i+1} (\eta_i - \eta_{i+1}) |_{H^1(\Omega_{i+1/2},\Lambda^{\ell-1})} \\ 
    &+ | \diff \phi_{i+1} \wedge w_{i+1/2} |_{H^1(\Omega_{i+1/2},\Lambda^{\ell-1})},
\end{align*}
with
\begin{align*}
  | \phi_{i+1} (\eta_i - \eta_{i+1}) |_{H^1(\Omega_{i+1/2},\Lambda^{\ell-1})} \leq & C_S | \eta_i - \eta_{i+1}|_{H^1(\Omega_{i+1/2},\Lambda^{\ell-1})} \\ &+ \frac{C_S}{d_\polT} \| \eta_i - \eta_{i+1} \|_{L^2(\Omega_{i+1/2},\Lambda^{\ell-1})} \\
  \leq & 2C_S C_\polT \left( 1 + \frac{C_P D_\polT}{d_\polT} \right) \| u \|_{L^2(\Omega_i \cup \Omega_{i+1}, \Lambda^\ell)},
\end{align*}
and
\begin{multline*}
  | \diff \phi_{i+1} \wedge w_{i+1/2} |_{H^1(\Omega_{i+1/2},\Lambda^{\ell-1})}  \\
  \leq \frac{C_S}{d_\polT^2} \| w_{i+1/2} \|_{L^2(\Omega_{i+1/2},\Lambda^{\ell-1})} + \frac{C_S}{d_\polT} | w_{i+1/2} |_{H^1(\Omega_{i+1/2},\Lambda^{\ell-1})} \\
  \leq \frac{2C_SC_PD_\polT C_\polT^2}{d_\polT}\left( \frac{ C_P D_\polT}{d_\polT} + 1 \right) \| u \|_{L^2(\Omega_i \cup \Omega_{i+1}, \Lambda^\ell)}.
\end{multline*}
Therefore, using that $C_\polT \geq 1$,
\[
  | \diff( \phi_{i+1/2} w_{i+1/2} ) |_{H^1(\Omega_{i+1/2},\Lambda^{\ell-1})} \leq 2 C_S C_\polT \left( \frac{ C_\polT C_P D_\polT}{d_\polT} + 1 \right)^2 \| u \|_{L^2(\Omega_i \cup \Omega_{i+1}, \Lambda^\ell)}.
\]

Using, once again, that every point $x\in \Omega$ belongs to at most two subsets
\begin{equation}
\label{eq:dwestglobal}
\begin{aligned}
  &8 \sum_{i=1}^{N} |\diff (\phi_{i+1} w_{i+1/2}) |_{H^1(\Omega_{i+1/2},\Lambda^{\ell-1})}^2 \\
  \leq & 16C_S^2 C_\polT^2 \left( \frac{ C_\polT C_P D_\polT}{d_\polT} + 1 \right)^4 \sum_{i=1}^{N-1}\| u \|_{L^2(\Omega_i \cup \Omega_{i+1}, \Lambda^\ell)}^2 \\
   \leq & 64 C_S^2 C_\polT^2 \left( \frac{ C_\polT C_P D_\polT}{d_\polT} + 1 \right)^4\| u \|_{L^2(\Omega, \Lambda^\ell)}^2.
\end{aligned}
\end{equation}

Gathering \eqref{eq:glob_estimate_eta_part} and \eqref{eq:dwestglobal}
\[
  |v|_{H^1(\Omega,\Lambda^{\ell-1})}^2 \leq 2 \left( 2C_\polT^2 +  64 C_S^2 C_\polT^2 \left( \frac{ C_\polT C_P D_\polT}{d_\polT} + 1 \right)^4 \right) \| u \|_{L^2(\Omega, \Lambda^\ell)}^2,
\]
which is the claimed estimate in the case $\ell \ge 2$. 

Now let us turn to the case $\ell=1$. In this case,  \eqref{eq:diffetasizero} implies that $\eta_{i}-\eta_{i+1}$ is a constant  on $\Omega_{i+1/2}$ which we denote by $b_i$. Hence, we define constants $c_i$ recursively satisfying
\begin{equation}\label{ci}
c_{i+1}=c_i+b_{i+1},
\end{equation}
with $c_1=0$. Then we set $v_i=\eta_i+c_i$ on $\Omega_i$. We see that $\diff v_i= \diff \eta_i=u$ on $\Omega_i$. Moreover, $v_{i+1}-v_{i}=(\eta_{i+1}+c_{i+1}) -(\eta_{i}+c_i)=0$ on $\Omega_{i+1/2}$. Therefore, we define $v \in H^1(\Omega, \Lambda^{0})$ by $v|_{\Omega_i} = v_i$ for every $i$. In this case, we have
\begin{alignat*}{1}
  |v|_{H^1(\Omega,\Lambda^{0})}^2 \leq & \sum_{i=1}^N |v_i|_{H^1(\Omega_i,\Lambda^{0})}^2\leq  2 \sum_{i=1}^N |\eta_i |_{H^1(\Omega_i,\Lambda^{0})}^2. 
\end{alignat*}
Combining this with \eqref{eq:glob_estimate_eta_part} gives the estimate in the case $\ell=1$. 
\end{proof}

\begin{remark}[$L^2$--bounds]
Although we only focused on an estimate for the $H^1$--seminorm we could have also obtained an estimate for the $L^2$--norm. The $L^2$ estimate in the case of $\ell \in \{2, \ldots, n\}$ would have followed easily. In the case $\ell=1$, however, the estimate would not have been so well behaved since the constants in \eqref{ci} would have more dependence on each other. As a consequence, at least with our technique, the constant in the $L^2$--bound would depend linearly on $N$ when $\ell =1$.
\end{remark}

\subsection{Using the Bogovski{\u{\i}} operator}
In this section we use the Bogovski{\u{\i}} operator, which we estimated in  Corollary \ref{cor:BogEstimate}, to prove estimates on a chain or star shaped domains for functions that have boundary conditions. We shall only consider the cases $\ell \in \{1,\ldots, n-1\}$. The result for $\ell=n$ was proved in \cite[Corollary 3.1]{MR3198867} in much greater generality (e.g. $L^p$ norms and allowing the cardinality of the subdomains $\{\Omega_i\}$ in the decomposition to be countable and having more overlap). In the fact, the result $\ell=n$ is very special in the sense that the constant seems to be worst behaved. One key difference is that in the case $\ell=n$ one has to correct forms to make them have average zero. In contrast, in the case $1 \le \ell < n$, one has to correct the forms to make them have vanishing exterior derivative.
 
For this reason, we need to provide an additional condition to our decomposition. From the previous assumptions we already had that $\phi_i$ vanishes on $\partial\Omega_i \setminus \partial\Omega$. We make this slightly stronger as follows:
\begin{enumerate}[$\bullet$]
\item The function $\phi_i$ vanishes in a neighborhood of $\partial\Omega_i \setminus \partial\Omega$.
\end{enumerate}

We then begin with an auxiliary result. The proof of this result is presented in Appendix~\ref{app:proof_lemB}.

\begin{lemma}[vanishing trace]
\label{lem:auxiliarylemmaBogovskiichain}
Let $i \in \{1, \ldots, N-1\}$, $\ell \in \{1, \ldots, n-1\}$, and $u \in L^2(\Omega_{i+1/2},\Lambda^\ell)$ be such that $\diff u = 0$. Define $w = \diff(\phi_{i+1}u)$, then $\tr_{\partial\Omega_{i+1/2}} w = 0$.
\end{lemma}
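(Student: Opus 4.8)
I want to show that $w = \diff(\phi_{i+1}u)$ has vanishing trace on $\partial\Omega_{i+1/2}$, where the trace of an $L^2$ form with $L^2$ exterior derivative is understood (as in Section~\ref{sec:Notation}) as a functional on $H^1(\Omega_{i+1/2},\Lambda^{n-\ell-1})$ via
\[
  \langle \tr_{\partial\Omega_{i+1/2}} w, \varphi \rangle = \int_{\Omega_{i+1/2}} \diff w \wedge \varphi + (-1)^{\ell+1} \int_{\Omega_{i+1/2}} w \wedge \diff \varphi.
\]
Since $\diff w = \diff^2(\phi_{i+1}u) = 0$, the first term drops and I must show $\int_{\Omega_{i+1/2}} w \wedge \diff\varphi = 0$ for all $\varphi \in H^1(\Omega_{i+1/2},\Lambda^{n-\ell-1})$. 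The key geometric input is the strengthened assumption that $\phi_{i+1}$ vanishes in a neighborhood of $\partial\Omega_{i+1}\setminus\partial\Omega$; I would combine this with the fact that $\phi_{i+1}$ already vanishes on $\Omega\setminus\Omega_{i+1}$ to conclude that $\phi_{i+1}$ (hence $w$, which is supported where $\phi_{i+1}$ or $\diff\phi_{i+1}$ does not vanish) is supported in a set whose closure does not meet $\partial\Omega_{i+1/2}\setminus\partial\Omega$.

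**Key steps.** First I would identify the relevant portion of $\partial\Omega_{i+1/2}$. Since $\Omega_{i+1/2} = \Omega_i\cap\Omega_{i+1}$, its boundary decomposes as pieces lying on $\partial\Omega_i$, pieces on $\partial\Omega_{i+1}$, and pieces on $\partial\Omega$. Near $\partial\Omega_{i+1}\setminus\partial\Omega$, by the new assumption $\phi_{i+1}$ vanishes together with all its derivatives, so $w=0$ there; hence $\tr w = 0$ on that part trivially. On the remaining part of $\partial\Omega_{i+1/2}$ — the portion lying on $\partial\Omega$ — I would argue that one can use a density/approximation argument: approximate $u$ in $L^2(\Omega_{i+1/2},\Lambda^\ell)$ by smooth closed forms (or simply work formally, justified by the boundedness results of Theorem~\ref{thm:Costabel} quoted in the Remark) and use the classical integration-by-parts formula
\[
  \int_{\Omega_{i+1/2}} \diff(\phi_{i+1}u) \wedge \varphi = (-1)^{\ell} \int_{\Omega_{i+1/2}} \phi_{i+1}u \wedge \diff\varphi + \int_{\partial\Omega_{i+1/2}} \tr(\phi_{i+1}u)\wedge\tr\varphi.
\]
The point is that on the part of $\partial\Omega_{i+1/2}$ coming from $\partial\Omega_{i+1}\setminus\partial\Omega$ the factor $\phi_{i+1}$ kills the boundary integrand, while the remaining boundary — which I claim is exactly the part of $\partial\Omega_{i+1/2}$ on $\partial\Omega$, together with the $\partial\Omega_i$-part — needs separate handling. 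Actually the cleanest route is: since $\supp\phi_{i+1}\cap\overline{\Omega_{i+1/2}}$ stays away from $\partial\Omega_{i+1/2}\setminus\partial\Omega$, the form $\phi_{i+1}u$ extended by zero is a legitimate element of $L^2(\Omega_i,\Lambda^\ell)$ whose support, inside $\overline{\Omega_i}$, touches $\partial\Omega_i$ only along $\partial\Omega$; by Stokes/the definition of the weak trace applied on $\Omega_i$ rather than $\Omega_{i+1/2}$, one gets the desired vanishing.

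**Main obstacle.** The delicate point, which I would spell out carefully in the appendix proof, is the precise geometric claim: that $\overline{\supp\phi_{i+1}}\cap\overline{\Omega_{i+1/2}}$ has empty intersection with $\partial\Omega_{i+1/2}\setminus\partial\Omega$. One must check that the part of $\partial\Omega_{i+1/2}$ not lying on $\partial\Omega$ is contained in $(\partial\Omega_i\setminus\partial\Omega)\cup(\partial\Omega_{i+1}\setminus\partial\Omega)$; near the $\partial\Omega_{i+1}$-piece the new neighborhood assumption on $\phi_{i+1}$ does the job, but near the $\partial\Omega_i$-piece one instead uses that $\phi_{i+1}$ is supported in $\overline{\Omega_{i+1}}$ and that, by the disjointness hypothesis ($\Omega_j\cap\Omega_{i+1}=\emptyset$ for $|j-i-1|>1$) together with the structure of the chain, a neighborhood of $\partial\Omega_i\setminus(\partial\Omega\cup\overline{\Omega_{i+1}})$ lies outside $\overline{\Omega_{i+1}}$, hence outside $\supp\phi_{i+1}$. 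Once this support statement is established, the vanishing of the weak trace follows from the definition of $\tr_{\partial\Omega_{i+1/2}}$ by testing against $\varphi\in H^1(\Omega_{i+1/2},\Lambda^{n-\ell-1})$, extending the computation to $\Omega_i$ where $\phi_{i+1}u$ has support away from $\partial\Omega_i\setminus\partial\Omega$, and invoking $\diff^2=0$; I expect the bookkeeping of which boundary piece is which to be the only real work.
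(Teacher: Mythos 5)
Your central geometric claim — that $\overline{\supp\phi_{i+1}}\cap\overline{\Omega_{i+1/2}}$ has empty intersection with $\partial\Omega_{i+1/2}\setminus\partial\Omega$ — is false, and this makes the proposed proof collapse. Split $\partial\Omega_{i+1/2}$ as the paper does into $\Gamma_0=\partial\Omega\cap\partial\Omega_{i+1/2}$, $\Gamma_1=(\partial\Omega_{i+1/2}\cap\partial\Omega_i)\setminus\Gamma_0$, and $\Gamma_2=(\partial\Omega_{i+1/2}\cap\partial\Omega_{i+1})\setminus\Gamma_0$. Near $\Gamma_2$ your reasoning is fine: $\phi_{i+1}$ vanishes in a neighborhood of $\partial\Omega_{i+1}\setminus\partial\Omega$. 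But near $\Gamma_1$ the situation is the opposite: $\phi_i$ vanishes there (same assumption applied to index $i$), and since $\phi_i+\phi_{i+1}=1$ locally, $\phi_{i+1}\equiv 1$ in a neighborhood of $\Gamma_1$. So $\Gamma_1\subset\overline{\supp\phi_{i+1}}$, and your ``support of $\phi_{i+1}$ stays away from $\partial\Omega_{i+1/2}\setminus\partial\Omega$'' is wrong. This also invalidates the step where you extend $\phi_{i+1}u$ by zero to $\Omega_i$ and assert that its support touches $\partial\Omega_i$ only along $\partial\Omega$; it touches $\Gamma_1\subset\partial\Omega_i\setminus\partial\Omega$ as well. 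Your ``main obstacle'' paragraph argues about $\partial\Omega_i\setminus(\partial\Omega\cup\overline{\Omega_{i+1}})$, which is not part of $\partial\Omega_{i+1/2}$ at all — the relevant piece $\Gamma_1$ lies inside $\overline{\Omega_{i+1}}$ and you give no argument for it.

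The missing idea is to use the hypothesis $\diff u=0$ at the start: it gives $w=\diff(\phi_{i+1}u)=\diff\phi_{i+1}\wedge u$, and the object whose support one must control is $\diff\phi_{i+1}$, not $\phi_{i+1}$. And $\diff\phi_{i+1}$ does vanish near the whole of $\Gamma_1\cup\Gamma_2$ — near $\Gamma_2$ because $\phi_{i+1}\equiv 0$ there, near $\Gamma_1$ because $\phi_{i+1}\equiv 1$ there. This is exactly what the paper exploits, both in its heuristic trace-factorization argument ($\tr\diff(\phi_{i+1}u)=\tr\diff\phi_{i+1}\wedge\tr u$) and in its rigorous version, where the extension $\widetilde{\phi}_{i+1}$ is built so that $\diff\widetilde{\phi}_{i+1}$ is compactly supported inside $\Omega_{i+1/2}$ away from $\Gamma_1\cup\Gamma_2$, after which the vanishing follows from $\diff u=0$ on $\Omega$, $\tr_{\partial\Omega}u=0$, and $\diff^2=0$. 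Without the substitution $\phi_{i+1}\mapsto\diff\phi_{i+1}$, the proof as you set it up cannot go through.
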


\begin{theorem}[estimate on a chain: with boundary conditions]\label{thembog}
Let $\Omega$ satisfy all the previously stated conditions, $\ell \in \{1, \ldots, n-1\}$ and $u \in L^2(\Omega, \Lambda^\ell)$ be such that $\diff u = 0$ and  $\tr_{\partial\Omega}u = 0$. Then, there is $v \in H_0^1(\Omega, \Lambda^{\ell-1})$ such that $\diff v = u$ and, moreover,
\[
  |v|_{H^1(\Omega,\Lambda^{\ell-1})} \leq C(C_\polT, D_\polT, d_\polT, C_S ) \| u \|_{L^2(\Omega,\Lambda^\ell)}.
\]
An upper bound for the constant in this estimate is given by
\begin{equation*}
  C(C_\polT, D_\polT, d_\polT, C_S ) \leq  4  C_\polT \sqrt{2+ C_S^2\mathsf{C}(n)^2 \frac{D_\polT^2}{d_\polT^2} C_\polT^2}.
\end{equation*}
\end{theorem}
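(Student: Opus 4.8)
The plan is to adapt the strategy of the preceding theorem, but to replace the Poincar\'e--type operator by the Bogovski\u{\i}--type one so as to obtain forms with vanishing trace, and to compensate for the fact that $\Bog$ requires \emph{closed} data by first decomposing $u$ into closed, local, trace--free pieces. Write $\psi_k=\sum_{j=1}^k\phi_j$ for $k=0,\dots,N$, so $\psi_0\equiv0$ and $\psi_N\equiv1$, and set $h_k=\diff\psi_k\wedge u$ for $k=1,\dots,N-1$, with $h_0=h_N=0$. The structural hypotheses of the decomposition give at once: (i) $h_k$ is closed, since $\diff u=0$; (ii) $h_k$ is supported in $\overline{\Omega_{k+1/2}}$, since on $\Omega_i$ the partial sum $\psi_k$ equals $1$ if $i\le k-1$, $0$ if $i\ge k+2$, $1-\phi_{k+1}$ on $\Omega_k$, and $\phi_k$ on $\Omega_{k+1}$, while $\phi_k,\phi_{k+1}$ are supported in $\Omega_{k+1/2}$ when restricted to $\Omega_k,\Omega_{k+1}$ respectively; (iii) on $\Omega_{k+1/2}$ one has $\psi_k=1-\phi_{k+1}$, hence $h_k=-\diff(\phi_{k+1}u)=\diff(\phi_k u)$ there, so $\tr_{\partial\Omega_{k+1/2}}h_k=0$ by Lemma~\ref{lem:auxiliarylemmaBogovskiichain} (when $\ell=n-1$ the $h_k$ are top degree, and the relevant hypothesis $\int_{\Omega_{k+1/2}}h_k=0$ of Theorem~\ref{thm:Costabel} follows from \eqref{eq:StokesFormula} and the support of $\phi_{k+1}$). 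One further records the telescoping identity $\diff(\phi_i u)=h_i-h_{i-1}$ on all of $\Omega$, obtained by checking it on $\Omega_{i-1/2}$, on $\Omega_{i+1/2}$, and on the remaining, pairwise disjoint, parts of $\Omega$, using $\phi_{i-1}+\phi_i=1$ on $\Omega_{i-1/2}$ and $\phi_i+\phi_{i+1}=1$ on $\Omega_{i+1/2}$.

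Since each $h_k$ is a closed $(\ell+1)$--form on the star shaped domain $\Omega_{k+1/2}$ with vanishing trace, Theorem~\ref{thm:Costabel} (applied with $\ell$ replaced by $\ell+1\le n$) and Corollary~\ref{cor:BogEstimate} furnish $z_k\in H^1_0(\Omega_{k+1/2},\Lambda^{\ell})$ with $\diff z_k=h_k$ and $|z_k|_{H^1(\Omega_{k+1/2})}\le C_\polT\|h_k\|_{L^2(\Omega_{k+1/2})}$; I extend $z_k$ by zero to $\Omega$ (legitimate because $z_k$ has vanishing trace, and consistent because $h_k$ vanishes a.e. off $\overline{\Omega_{k+1/2}}$) and put $z_0=z_N=0$. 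Define $u_i=\phi_i u-z_i+z_{i-1}$. The telescoping identity gives $\diff u_i=(h_i-h_{i-1})-h_i+h_{i-1}=0$; moreover $u_i$ is supported in $\overline{\Omega_i}$ (as $\Omega_{i\pm1/2}\subset\Omega_i$) and $\tr_{\partial\Omega_i}u_i=0$, because $\phi_i$ vanishes near $\partial\Omega_i\setminus\partial\Omega$, $\tr_{\partial\Omega}u=0$, and the closures of $\Omega_{i\pm1/2}$ meet $\partial\Omega_i$ only inside $\partial\Omega_{i\pm1/2}$, where $z_i,z_{i-1}$ vanish. Hence Theorem~\ref{thm:Costabel} applies once more, now on $\Omega_i$, producing $v_i\in H^1_0(\Omega_i,\Lambda^{\ell-1})$ with $\diff v_i=u_i$ and, by Corollary~\ref{cor:BogEstimate}, $|v_i|_{H^1(\Omega_i)}\le C_\polT\|u_i\|_{L^2(\Omega_i)}$. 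Extending each $v_i$ by zero and setting $v=\sum_{i=1}^N v_i\in H^1_0(\Omega,\Lambda^{\ell-1})$, the telescoping sum $\sum_i u_i=\big(\sum_i\phi_i\big)u-\sum_{i=1}^{N-1}z_i+\sum_{i=0}^{N-1}z_i=u$ yields $\diff v=u$.

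For the norm bound, since every point of $\Omega$ lies in at most two of the $\Omega_i$, $|v|_{H^1(\Omega)}^2\le 2\sum_i|v_i|_{H^1(\Omega_i)}^2\le 2C_\polT^2\sum_i\|u_i\|_{L^2(\Omega_i)}^2$. A direct estimate of $h_k=\diff\phi_k\wedge u$ on $\Omega_{k+1/2}$ using the hypothesis on $\|\partial^\alpha\phi_i\|_{L^\infty}$ gives $\|h_k\|_{L^2(\Omega_{k+1/2})}\le (C_S/d_\polT)\|u\|_{L^2(\Omega_{k+1/2})}$, and Lemma~\ref{lem:PoincI} together with the Bogovski\u{\i} estimate then gives $\|z_k\|_{L^2(\Omega_{k+1/2})}\le \mathsf{C}(n)D_\polT C_\polT (C_S/d_\polT)\|u\|_{L^2(\Omega_{k+1/2})}$. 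Because the sets $\Omega_{k+1/2}$ are pairwise disjoint and $\supp z_k\subset\overline{\Omega_{k+1/2}}$, expanding $\|u_i\|_{L^2(\Omega_i)}^2\le 2\|\phi_i u\|^2+2\|z_{i-1}\|^2+2\|z_i\|^2$ and summing over $i$ leads to $\sum_i\|u_i\|_{L^2(\Omega_i)}^2\le 4\big(1+\mathsf{C}(n)^2C_\polT^2C_S^2 D_\polT^2/d_\polT^2\big)\|u\|_{L^2(\Omega)}^2$, whence the claimed bound, with room to spare in the numerical constants. The main obstacle is not any single estimate but the algebraic core of the construction: producing the corrector $z_i$ and checking that $u_i=\phi_i u-z_i+z_{i-1}$ is simultaneously closed, supported in $\overline{\Omega_i}$, and of vanishing trace on $\partial\Omega_i$, so that the \emph{local} Bogovski\u{\i}--type operator may legitimately be applied on each $\Omega_i$; this is where the telescoping identity $\diff(\phi_i u)=h_i-h_{i-1}$, the trace vanishing of $h_k$ supplied by Lemma~\ref{lem:auxiliarylemmaBogovskiichain}, and the strengthened hypothesis that $\phi_i$ vanish in a \emph{neighborhood} of $\partial\Omega_i\setminus\partial\Omega$ all enter decisively.
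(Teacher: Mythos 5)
Your construction is, in substance, the same two--stage Bogovski\u{\i} scheme as the paper's: first correct on the pairwise overlaps $\Omega_{i+1/2}$, then solve on each $\Omega_i$. With $z_k = -w_{k+1/2}$ in the paper's notation, your $u_i = \phi_i u - z_i + z_{i-1}$ coincides with the right--hand side $\phi_i u + w_{i+1/2} - w_{i-1/2}$ that the paper feeds to $\Bog$ on $\Omega_i$, and the norm accounting at the end is the same (with room to spare, as you note). Your repackaging through the partial sums $\psi_k = \sum_{j \le k} \phi_j$ and the closed, locally supported forms $h_k = \diff\psi_k \wedge u$ is a genuinely nicer way to organize the bookkeeping: the telescoping $\diff(\phi_i u) = h_i - h_{i-1}$ is transparent, and $\psi_k$ is precisely $1 - \widetilde{\phi}_{k+1}$, the auxiliary cutoff the paper has to build by hand to run the argument on $\Omega$. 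So this is the same route, presented more cleanly.

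The one place where your argument, as written, does not hold up is the parenthetical claim that when $\ell = n-1$ the zero--average condition $\int_{\Omega_{k+1/2}} h_k = 0$ ``follows from \eqref{eq:StokesFormula} and the support of $\phi_{k+1}$.'' Applying Stokes on $\Omega_{k+1/2}$ to $h_k = -\diff(\phi_{k+1} u)$ gives $\int_{\Omega_{k+1/2}} h_k = -\int_{\partial\Omega_{k+1/2}} \tr(\phi_{k+1} u)$, and if one decomposes $\partial\Omega_{k+1/2} = \Gamma_0 \sqcup \Gamma_1 \sqcup \Gamma_2$ as in the proof of Lemma~\ref{lem:auxiliarylemmaBogovskiichain}, the support of $\phi_{k+1}$ only kills the $\Gamma_2$ piece and $\tr_{\partial\Omega} u = 0$ kills the $\Gamma_0$ piece; on $\Gamma_1$ one has $\phi_{k+1} \equiv 1$, so the contribution is $-\int_{\Gamma_1} \tr u$, which is not controlled by any local support consideration. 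This is a global fact, not a local one. The clean way to prove it within your own framework is to observe that $\diff\psi_k$ is supported in $\overline{\Omega_{k+1/2}}$, so $\int_{\Omega_{k+1/2}} h_k = \int_\Omega \diff\psi_k \wedge u = \int_\Omega \diff(\psi_k u) = \langle \tr_{\partial\Omega} u, \psi_k \rangle = 0$ by the hypothesis $\tr_{\partial\Omega} u = 0$; this is exactly what the paper does with $\widetilde{\phi}_{k+1} = 1 - \psi_k$. With that correction the proof is complete and matches the paper's.
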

\begin{proof}
By Lemma~\ref{lem:auxiliarylemmaBogovskiichain} we have that $\tr_{\partial\Omega_{i+1/2}}\diff(\phi_{i+1} u)$. 

Define $W_i = \cup_{1 \leq j \leq i} \Omega_j$, and let $\widetilde{\phi}_{i+1} =0$ be the function that coincides with $\phi_{i+1}$ in $\Omega_{i+1/2}$, equals zero in $W_i\setminus \Omega_{i+1/2}$ and equals one in $\Omega \setminus W_i$. The additional assumption we imposed in the decomposition guarantees that $\widetilde{\phi}_{i+1} \in C^1(\overline{\Omega})$.
Then, since $\diff u = 0$ in $\Omega$, 
\[
  \int_{\Omega_{i+1/2}} \diff(\phi_{i+1}u) = \int_{\Omega} \diff(\widetilde{\phi}_{i+1}u) 
  = \langle \tr_{\partial\Omega} u , \widetilde{\phi}_{i+1} \rangle = 0.
\]

Hence,  using Theorem~\ref{thm:Costabel} and Corollary~\ref{cor:BogEstimate} we obtain $w_{i+1/2} \in H_0^1(\Omega_{i+1/2},\Lambda^{\ell})$,   such that $\diff w_{i+1/2}= \diff(\phi_{i+1} u)$ on $\Omega_{i+1/2}$ with the estimate
\begin{multline*}
  \| w_{i+1/2} \|_{L^2(\Omega_{i+1/2}, \Lambda^{\ell})} \leq \mathsf{C}(n) D_\polT | w_{i+1/2} |_{H^1(\Omega_{i+1/2},\Lambda^{\ell})} \\
    \leq \mathsf{C}(n) D_\polT C_\polT \| \diff(\phi_{i+1} u) \|_{L^2(\Omega_{i+1/2},\Lambda^{\ell+1})}
  \leq C_S\mathsf{C}(n) \frac{D_\polT}{d_\polT} C_\polT \| u \|_{L^2(\Omega_{i+1/2},\Lambda^{\ell})}.
\end{multline*}

We thus have 
 \begin{equation*}
 \diff(\phi_i u)=\diff (w_{i-1/2}-w_{i+1/2}) \quad \text{ in } \Omega_i.
 \end{equation*}
 Here we used that $-\diff(w_{i+1/2})=-\diff(\phi_{i+1} u)=\diff(\phi_i u)$ in $\Omega_{i+1/2}$.  We also used that $\phi_i=1$ on $\Omega_i \backslash (\Omega_{i-1/2} \cup \Omega_{i+1/2})$.  Again, using Theorem~\ref{thm:Costabel} and Corollary~\ref{cor:BogEstimate} we can find $v_i \in H_0^1(\Omega_{i},\Lambda^{\ell-1})$ such that $\diff v_i= \phi_i u +w_{i+1/2}-w_{i-1/2}$. With the bound
 \begin{equation*}
  | v_i |_{H^1(\Omega_{i},\Lambda^{\ell})} \le  C_\polT \| \phi_i u +w_{i+1/2}-w_{i-1/2}\|_{L^2(\Omega_{i+1/2}, \Lambda^{\ell})}.
 \end{equation*}
  We then define $v=\sum_{i=1}^N v_i \in H_0^1(\Omega,\Lambda^{\ell-1})$ and see that 
 \begin{equation*} 
 \diff  v= \sum_{i=1}^N \diff  v_i= \sum_{i=1}^N \phi_i u=u \quad \text{ on } \Omega.
 \end{equation*}
 
 Moreover,
\begin{alignat*}{1}
  |v|_{H^1(\Omega,\Lambda^{\ell-1})}^2 \leq & 2 \sum_{i=1}^N |v_i|_{H^1(\Omega_i,\Lambda^{\ell-1})}^2\\
   \leq & 16 C_\polT^2 \sum_{i=1}^N \big(\|\phi_i u \|_{L^2(\Omega_i,\Lambda^{\ell})}^2+\|w_{i+1/2}\|_{L^2(\Omega_{i+1/2},\Lambda^{\ell})}^2\big)\\ 
   \leq & 16 C_\polT^2 \big(2+ C_S^2\mathsf{C}(n)^2 \frac{D_\polT^2}{d_\polT^2} C_\polT^2\big) \|u \|_{L^2(\Omega,\Lambda^{\ell})}^2,
\end{alignat*}
which gives the desired estimate.
\end{proof}

\appendix
\section{An alternative proof of Lemma~\ref{lem:boundPthetaU}}

For diversity in our arguments let us show a direct proof of Lemma~\ref{lem:boundPthetaU}. The change of variables $z = x + \tfrac{y-x}{1-s}$ gives that
\[
 |P_\theta^{k,U} f(x) | \leq \int_{1/2}^1 s^{k-1}\int \theta(z) |f(sx+(1-s)z)| \diff z \diff s,
\]
so that
\begin{align*}
 \| P_\theta^{k,U} f \|_{L^2(\Omega)} &\leq \int_{1/2}^1 s^{k-1}\int \theta(z) \left( \int_\Omega |f(sx+(1-s)z)|^2 \diff x \right)^{1/2} \diff z \diff s \\
   &= \int_{1/2}^1 s^{k-1-n/2} \int \theta(z) \left( \int_\Omega |f(\bar x)|^2 \diff \bar x\right)^{1/2} \diff z \diff s,
\end{align*}
where we again used that $\Omega$ is star shaped with respect to a ball, so that if $x \in \Omega$ and $z \in \supp \theta \subset B$, then $\bar x = sx+(1-s)z \in \Omega$. With this technique then the same estimate can be concluded.

\section{Proof of Lemma~\ref{lem:auxiliarylemmaBogovskiichain}}
\label{app:proof_lemB}
Intuitively it is clear that the result holds. To see this, we observe that can write  $\partial \Omega_{i+1/2}=\Gamma_0 \sqcup \Gamma_1 \sqcup \Gamma_2$ where $\Gamma_0=\partial \Omega \cap \partial \Omega_{i+1/2}$, $\Gamma_1= (\partial \Omega_{i+1/2} \cap \partial \Omega_{i}) \backslash \Gamma_0$ and  $\Gamma_2= (\partial \Omega_{i+1/2} \cap \partial \Omega_{i+1}) \backslash \Gamma_0$. Note that  $\phi_{i+1} \equiv 1$ on $\Gamma_1$ and $\phi_{i+1} \equiv 0$ on $\Gamma_2$. Thus, we have that $\tr_{\partial \Omega_{i+1/2}}\diff \phi_{i+1}=0$ in $\Gamma_1 \cup \Gamma_2$. We now note that, since  $\diff u=0$, we have $\diff(\phi_{i+1} u)= \diff \phi_{i+1} \wedge u$ and as a consequence $\tr_{\partial \Omega_{i+1/2}}\diff(\phi_{i+1} u)=\tr_{\partial \Omega_{i+1/2}}( \diff \phi_{i+1} \wedge u)= \tr_{\partial \Omega_{i+1/2}}\diff \phi_{i+1} \wedge \tr_{\partial \Omega_{i+1/2}}u$. Here we used that the trace operator (or more generally a pullback) respects the wedge product \cite[Lemma 14.16 (b)]{MR1930091}. Since $\tr_{\partial \Omega_{i+1/2}} u =0$ on $\Gamma_0$ and we showed that $\tr_{\partial \Omega_{i+1/2}}\diff \phi_{i+1}=0$ on $\Gamma_1 \cup \Gamma_2$, we conclude that $\tr_{\partial \Omega_{i+1/2}} \diff(\phi_{i+1} u)=0$.

Let us now be more rigorous in our reasoning.

\begin{proof}
Notice, first of all, that since $w \in L^2(\Omega_{i+1/2}, \Lambda^\ell)$ and $\diff w = 0$, the trace $\tr_{\Omega_{i+1/2}} w$ is well defined.

By definition, if $\psi \in H^1(\Omega_{i+1/2}, \Lambda^{n-\ell-2})$,
\begin{align*}
  \left|\langle \tr_{\Omega_{i+1/2}} w, \psi \rangle\right| &= \left|\int_{\Omega_{i+1/2}} w \wedge \diff \psi \right|= 
  \left| \int_{\Omega_{i+1/2}} u \wedge \diff \phi_{i+1} \wedge \diff \psi \right| \\
  &= \left| \int_\Omega u \wedge \diff \widetilde{\phi}_{i+1} \wedge \diff \psi \right|,
\end{align*}
where we used that $\diff u = 0$ in $\Omega_{i+1/2}$, and $\widetilde{\phi}_{i+1}$ has the same meaning as in the proof of Theorem~\ref{thembog}.

Observe now that $\diff \widetilde{\phi}_{i+1} \wedge \diff \psi = - \diff(\diff \widetilde{\phi}_{i+1} \wedge \psi )$ so that, invoking the fact that $\diff u = 0$ in $\Omega$, we obtain
\[
  \left|\langle \tr_{\Omega_{i+1/2}} w, \psi \rangle\right| = \left|\langle \tr_{\Omega} u, \diff \widetilde{\phi}_{i+1} \wedge \psi \rangle\right| =0,
\]
where in the last step we used that $\diff \widetilde{\phi}_{i+1} \wedge \psi \in H^1(\Omega, \Lambda^{n-\ell-1})$.
\end{proof}

\bibliographystyle{plain}
\bibliography{biblio}

\begin{thebibliography}{10}

\bibitem{MR3618122}
G.~Acosta and R.G. Dur\'{a}n.
\newblock {\em Divergence operator and related inequalities}.
\newblock SpringerBriefs in Mathematics. Springer, New York, 2017.

\bibitem{MR2552910}
R.P. Agarwal, S.~Ding, and C.~Nolder.
\newblock {\em Inequalities for differential forms}.
\newblock Springer, New York, 2009.

\bibitem{MR2269741}
D.N. Arnold, R.S. Falk, and R.~Winther.
\newblock Finite element exterior calculus, homological techniques, and
  applications.
\newblock {\em Acta Numer.}, 15:1--155, 2006.

\bibitem{MR2834164}
H.~Bi and S.~Ding.
\newblock Some strong {$(p,q)$}-type inequalities for the homotopy operator.
\newblock {\em Comput. Math. Appl.}, 62(4):1780--1789, 2011.

\bibitem{boffi2011discrete}
D.~Boffi, M.~Costabel, M.~Dauge, L.~Demkowicz, and R.~Hiptmair.
\newblock Discrete compactness for the $p$-version of discrete differential
  forms.
\newblock {\em SIAM Journal on numerical analysis}, 49(1):135--158, 2011.

\bibitem{MR631691}
M.~E. Bogovski\u{\i}.
\newblock Solutions of some problems of vector analysis, associated with the
  operators {${\rm div}$} and {${\rm grad}$}.
\newblock In {\em Theory of cubature formulas and the application of functional
  analysis to problems of mathematical physics}, volume 1980 of {\em Trudy Sem.
  S. L. Soboleva, No. 1}, pages 5--40, 149. Akad. Nauk SSSR Sibirsk. Otdel.,
  Inst. Mat., Novosibirsk, 1980.

\bibitem{MR553920}
M.E. Bogovski\u{\i}.
\newblock Solution of the first boundary value problem for an equation of
  continuity of an incompressible medium.
\newblock {\em Dokl. Akad. Nauk SSSR}, 248(5):1037--1040, 1979.

\bibitem{braess2009equilibrated}
D.~Braess, V.~Pillwein, and J.~Sch{\"o}berl.
\newblock Equilibrated residual error estimates are p-robust.
\newblock {\em Computer Methods in Applied Mechanics and Engineering},
  198(13-14):1189--1197, 2009.

\bibitem{MR1622690}
V.I. Burenkov.
\newblock {\em Sobolev spaces on domains}, volume 137 of {\em Teubner-Texte zur
  Mathematik [Teubner Texts in Mathematics]}.
\newblock B. G. Teubner Verlagsgesellschaft mbH, Stuttgart, 1998.

\bibitem{chaumont2020stable}
T.~Chaumont-Frelet, A.~Ern, and M.~Vohral{\'\i}k.
\newblock Stable broken $h(curl)$ polynomial extensions and $p$-robust
  quasi-equilibrated a posteriori estimators for {M}axwell's equations.
\newblock {\em arXiv preprint arXiv:2005.14537}, 2020.

\bibitem{MR2609313}
M.~Costabel and A.~McIntosh.
\newblock On {B}ogovski\u{\i} and regularized {P}oincar\'{e} integral operators
  for de {R}ham complexes on {L}ipschitz domains.
\newblock {\em Math. Z.}, 265(2):297--320, 2010.

\bibitem{MR2643399}
L.~Diening, M.~R\r{u}\v{z}i\v{c}ka, and K.~Schumacher.
\newblock A decomposition technique for {J}ohn domains.
\newblock {\em Ann. Acad. Sci. Fenn. Math.}, 35(1):87--114, 2010.

\bibitem{MR1857238}
S.~Ding.
\newblock Weighted imbedding theorems in the space of differential forms.
\newblock {\em J. Math. Anal. Appl.}, 262(1):435--445, 2001.

\bibitem{MR3837152}
M.P. do~Carmo.
\newblock {\em Differential geometry of curves \& surfaces}.
\newblock Dover Publications, Inc., Mineola, NY, 2016.
\newblock Revised \& updated second edition of [ MR0394451].

\bibitem{MR3086804}
R.G. Dur\'{a}n.
\newblock An elementary proof of the continuity from {$L^2_0(\Omega)$} to
  {$H^1_0(\Omega)^n$} of {B}ogovskii's right inverse of the divergence.
\newblock {\em Rev. Un. Mat. Argentina}, 53(2):59--78, 2012.

\bibitem{MR2731700}
R.G. Dur\'{a}n and F.~L\'{o}pez~Garc\'{i}a.
\newblock Solutions of the divergence and {K}orn inequalities on domains with
  an external cusp.
\newblock {\em Ann. Acad. Sci. Fenn. Math.}, 35(2):421--438, 2010.

\bibitem{MR1880723}
R.G. Dur\'{a}n and M.A. Muschietti.
\newblock An explicit right inverse of the divergence operator which is
  continuous in weighted norms.
\newblock {\em Studia Math.}, 148(3):207--219, 2001.

\bibitem{ern2015polynomial}
Al. Ern and M.~Vohral{\'\i}k.
\newblock Polynomial-degree-robust a posteriori estimates in a unified setting
  for conforming, nonconforming, discontinuous {G}alerkin, and mixed
  discretizations.
\newblock {\em SIAM Journal on Numerical Analysis}, 53(2):1058--1081, 2015.

\bibitem{farrell2020robust}
P.E. Farrell, L.~Mitchell, L.R. Scott, and F.~Wechsung.
\newblock Robust multigrid methods for nearly incompressible elasticity using
  macro elements.
\newblock {\em arXiv preprint arXiv:2002.02051}, 2020.

\bibitem{galdi2011introduction}
G.~Galdi.
\newblock {\em An introduction to the mathematical theory of the Navier-Stokes
  equations: Steady-state problems}.
\newblock Springer Science \& Business Media, 2011.

\bibitem{gilbarg2015elliptic}
David Gilbarg and Neil~S Trudinger.
\newblock {\em Elliptic partial differential equations of second order}.
\newblock springer, 2015.

\bibitem{MR2271946}
V.~Gol'dshtein and M.~Troyanov.
\newblock Sobolev inequalities for differential forms and
  {$L_{q,p}$}-cohomology.
\newblock {\em J. Geom. Anal.}, 16(4):597--631, 2006.

\bibitem{MR3243734}
L.~Grafakos.
\newblock {\em Classical {F}ourier analysis}, volume 249 of {\em Graduate Texts
  in Mathematics}.
\newblock Springer, New York, third edition, 2014.

\bibitem{MR1241286}
T.~Iwaniec and A.~Lutoborski.
\newblock Integral estimates for null {L}agrangians.
\newblock {\em Arch. Rational Mech. Anal.}, 125(1):25--79, 1993.

\bibitem{MR0344035}
A.~Kartan.
\newblock {\em {Differentsial'noe ischislenie. Differentsial'nye formy}}.
\newblock Izdat. ``Mir'', Moscow, 1971.
\newblock Translated from the French by B. K. Kaljakin and A. N. Tjurin, Edited
  by B. A. Fuks.

\bibitem{MR0155092}
O.A. Lady\v{z}enskaja.
\newblock {\em {Matematicheskie voprosy dinamiki vyazko\u{i} neszhimaemo\u{i}
  zhidkosti}}.
\newblock Gosudarstv. Izdat. Fiz.-Mat. Lit., Moscow, 1961.

\bibitem{MR1930091}
John~M. Lee.
\newblock {\em Introduction to smooth manifolds}, volume 218 of {\em Graduate
  Texts in Mathematics}.
\newblock Springer-Verlag, New York, 2003.

\bibitem{MR3198867}
F.~L\'{o}pez~Garc\'{\i}a.
\newblock A decomposition technique for integrable functions with applications
  to the divergence problem.
\newblock {\em J. Math. Anal. Appl.}, 418(1):79--99, 2014.

\bibitem{melenk2020commuting}
J.~Melenk and C.~Rojik.
\newblock On commuting p-version projection-based interpolation on tetrahedra.
\newblock {\em Mathematics of Computation}, 89(321):45--87, 2020.

\bibitem{MR2425010}
D.~Mitrea, M.~Mitrea, and S.~Monniaux.
\newblock The {P}oisson problem for the exterior derivative operator with
  {D}irichlet boundary condition in nonsmooth domains.
\newblock {\em Commun. Pure Appl. Anal.}, 7(6):1295--1333, 2008.

\bibitem{MR2166752}
M.~Mitrea.
\newblock Sharp {H}odge decompositions, {M}axwell's equations, and vector
  {P}oisson problems on nonsmooth, three-dimensional {R}iemannian manifolds.
\newblock {\em Duke Math. J.}, 125(3):467--547, 2004.

\bibitem{morin2003local}
P.~Morin, R.H. Nochetto, and K.~Siebert.
\newblock Local problems on stars: a posteriori error estimators, convergence,
  and performance.
\newblock {\em Mathematics of Computation}, 72(243):1067--1097, 2003.

\bibitem{MR0227584}
Jind{\v r}ich Ne{\v c}as.
\newblock {\em Les m\'ethodes directes en th\'eorie des \'equations
  elliptiques}.
\newblock Masson et Cie, \'Editeurs, Paris; Academia, \'Editeurs, Prague, 1967.

\bibitem{MR2548872}
K.~Schumacher.
\newblock Solutions to the equation {${\rm div}\,u=f$} in weighted {S}obolev
  spaces.
\newblock In {\em Parabolic and {N}avier-{S}tokes equations. {P}art 2},
  volume~81 of {\em Banach Center Publ.}, pages 433--440. Polish Acad. Sci.
  Inst. Math., Warsaw, 2008.

\bibitem{MR1846644}
R.~Temam.
\newblock {\em Navier-{S}tokes equations}.
\newblock AMS Chelsea Publishing, Providence, RI, 2001.
\newblock Theory and numerical analysis, Reprint of the 1984 edition.

\bibitem{toselli2006domain}
A.~Toselli and O.~Widlund.
\newblock {\em Domain decomposition methods-algorithms and theory}, volume~34.
\newblock Springer Science \& Business Media, 2006.

\end{thebibliography}

\end{document}